\documentclass{amsart}

\title[Topological $G$-manifolds III]{Countable approximation of topological $G$-manifolds, III:  arbitrary Lie groups $G$}

\author[Q Khan]{Qayum Khan}
\address{Department of Mathematics \hfill Indiana University \hfill Bloomington IN 47405 USA}
\email{qkhan@indiana.edu}

%------------------------------------------------------------------------------
\usepackage[utf8]{inputenc}
\usepackage[russian,english]{babel}

\usepackage{amsthm,amsmath,amssymb,eucal,mathrsfs,verbatim}
\usepackage{url}

\usepackage{xcolor}
\definecolor{dark-red}{rgb}{0.4,0.15,0.15}
\definecolor{dark-blue}{rgb}{0.15,0.15,0.4}
\definecolor{medium-blue}{rgb}{0,0,0.5}
\usepackage[colorlinks, linkcolor={dark-red}, citecolor={dark-blue}, urlcolor={medium-blue}]{hyperref} % must be the last package loaded

\newtheorem{thm}{Theorem}[section]

\newtheorem{cor}[thm]{Corollary}
\newtheorem{lem}[thm]{Lemma}
\newtheorem{prop}[thm]{Proposition}

\theoremstyle{definition}
\newtheorem{defn}[thm]{Definition}

\newtheorem{rem}[thm]{Remark}
\newtheorem{exm}[thm]{Example}

\numberwithin{equation}{section}
%\numberwithin{figure}{section}

\DeclareMathAlphabet{\matheurm}{U}{eur}{m}{n}

% Using regular bold (not blackboard bold) for spectra

\newcommand{\M}{\mathbb{M}}
\newcommand{\N}{\mathbb{N}}
\newcommand{\Q}{\mathbb{Q}}
\newcommand{\R}{\mathbb{R}}

\newcommand{\Z}{\mathbb{Z}}

\renewcommand{\G}{\Gamma}

\newcommand{\cC}{\mathcal{C}}
\newcommand{\cD}{\mathcal{D}}

\newcommand{\cL}{\mathcal{L}}

\newcommand{\cM}{\mathcal{M}}

\newcommand{\cO}{\mathcal{O}}
\newcommand{\cP}{\mathcal{P}}

\newcommand{\cS}{\mathcal{S}}
\newcommand{\cT}{\mathcal{T}}

\newcommand{\cpt}{\matheurm{cpt}}

\newcommand{\fin}{\matheurm{fin}}

\newcommand{\Cpt}{\mathrm{Kpt}}

\newcommand{\Homeo}{\mathrm{Homeo}}

\newcommand{\id}{\mathrm{id}}

\newcommand{\Map}{\mathrm{Map}}

\newcommand{\Or}{\mathrm{Or}}

 %formerly {\mathrm{TOP}}

\newcommand{\co}{\mathrm{co}}

\newcommand{\eps}{\varepsilon}

\newcommand{\longra}{\longrightarrow}

 % emulates a ``text prime''
\newcommand{\x}{\times}

\newcommand{\ol}[1]{\overline{#1}}

\newcommand{\inv}{^{-1}}
\newcommand{\Sec}{\mathrm{Sec}}

\newcommand{\Int}{\mathrm{Int}}

\newcommand{\cco}{\ol{\mathrm{co}}}

\newcommand{\card}{\mathrm{card}}

%------------------------------------------------------------------------------
\begin{document}

\dedicatory{This paper is dedicated to the memory of Jan W Jaworowski (1928--2013),\\ with whom the author had brief cordial interactions at Indiana University.}

\begin{abstract}
The Hilbert--Smith conjecture states, for any connected topological manifold $M$, any locally compact subgroup of $\Homeo(M)$ is a Lie group.
We generalize basic results of Segal--Kosniowski--tomDieck (\ref{prop:tomDieck}), James--Segal (\ref{cor:JamesSegal}), G~Bredon (\ref{cor:transtoral}), Jaworowski--Antonyan \emph{et al.}~(\ref{thm:criterion}), and E~Elfving (\ref{cor:main}).
The last is our main result: for any Lie group $G$, any Palais-proper topological $G$-manifold has the $G$-homotopy type of a countable proper $G$-CW complex.
Along the way, we verify an $n$-classifying space for principal $G$-bundles (\ref{thm:classify}).
\end{abstract}

\maketitle

%------------------------------------------------------------------------------
\section*{Historical introduction and statement of results}

Interpreting an ambitious desire (1854) from  B~Riemann's habilitation, E~Betti formalized the extrinsic definition (1871) of \emph{differentiable ($C^1$) manifold,} as a nonsingular differentiable zero-locus, via the implicit function theorem of Cauchy--Dini.  Later, H~Weyl gave the intrinsic definition (1913) of \emph{topological ($C^0$) manifold,} as a locally euclidean separable metric space, the latter from M~Fr\'{e}chet's thesis (1906).

Kuratowski's thesis provided the axiomatic definition (1922) of topological space.
Besides manifolds, we shall consider three well-known classes of topological spaces.
Firstly, O~Schreier introduced the notion (1925) of a $T_0$ (so $T_{3.5}$) \emph{topological group:} a Hausdorff space whose underlying set is a group such that the division function $(x,y) \longmapsto x y\inv$ is continuous.
Examples are Lie groups and the $p$-adics.
This concept is motivated by a continuous group of transformations, that is, a group action.
Secondly, the \emph{absolute neighborhood extensor (ANE)} was introduced by K~Borsuk (1932): a normal Hausdorff space satisfying that any continuous function to it from any closed subset of any metrizable space extends to a neighborhood.
Examples are simplicial polytopes with either metric or weak topology, as well as topological manifolds.
Thirdly, J\,H\,C~Whitehead invented the \emph{CW complex} (1949): a cell complex whose closure of an open cell consists of only finitely many open cells and a subset of the space is open means its intersection with any open cell is open.
Of course by design, simplicial polytopes with weak topology are CW complexes.

Allowing for nonlinear (so noncompact) formulation, a \emph{Lie group} (1873) is a real-analytic manifold (the transition functions are $C^\omega$) equipped with a group structure such that $(x,y) \longmapsto x y\inv$ is real-analytic.
Hilbert's fifth problem (1900) has a first interpretation proven by A~Gleason (1950) along with Montgomery--Zippin (1952): the Lie groups are exactly the topological groups that are topological manifolds.
A second interpretation of the fifth problem generalizes it to effective group actions and not merely group translations; it is called the Hilbert--Smith conjecture (1941): any locally compact subgroup $G$ of the homeomorphism group (equipped with the compact-open topology) of a connected topological manifold is Lie.
Limited by this conjecture, our main results are for Lie groups $G$.
Namely, J~Jaworowski's $G$-ANE criterion (1981) for compact Lie groups, generalized by S~Antonyan \emph{et al.} to linear Lie groups (2017), is further generalized to all Lie $G$ (\ref{thm:criterion}).
However, when possible, we consider locally compact $G$, such as in our generalization of Segal--Kosniowski--tomDieck's correspondence (1987) between $G$-maps and sections (\ref{prop:tomDieck}).
Our main theorem (\ref{thm:main}) is that any second-countable \emph{$\Z$-cohomology manifold,} in the sense of A~Borel's Smith-theory seminar (1960), equipped with a Palais-proper action of a Lie group $G$ is a $G$-ANR if and only if the fixed set of each subgroup is locally contractible.
Here, one can define ($G$-)ANR as ($G$-)metrizable ($G$-)ANE.
Along the way, we correct gaps and generalize to all Lie $G$ a criterion of James--Segal (1980): a $G$-map between suitable proper $G$-ANRs is a $G$-homotopy equivalence if and only if it restricts to homotopy equivalences between the $H$-fixed sets (\ref{cor:JamesSegal}).

Algebraic topologists prefer CW complexes over ANEs because open cells are euclidean spaces, quotients of cellular maps have an induced CW structure, and one can induct on dimension.
Equipped with the cell-wise product, CW complexes and cellular maps form a subcategory of the compactly generated Hausdorff spaces.
For any topological space $X$, S~Eilenberg introduced (1944) the singular complex $S(X)$, and J~Giever proved (1950) that the evaluation map $|S(X)| \longra X$ from its realization polytope equipped with weak topology is a weak homotopy equivalence.

Despite being functorial, $S(X)$ has an \emph{excess number} of simplices: $\mathfrak{c}$ if $\card(X) \leqslant \mathfrak{c}$ else $(\card\,X)^{\aleph_0}$, where $\mathfrak{c} := 2^{\aleph_0} = \card(\R)$ is the cardinality of the continuum.
Nonetheless, O~Hanner showed (1951) any separable metrizable ANR is homotopy equivalent to a \emph{countable} (via E~Lindel\"of's lemma, 1904) locally finite polytope.
For Lie $G$, by observations of the author \cite[Proof~3.1]{Khan_linearLie} on Antonyan--Elfving (2009), any separable $G$-ANR has the $G$-homotopy type of a countable proper $G$-CW complex.
Combined with the above main theorem, a corollary (\ref{cor:main}) is that any \emph{topological $G$-manifold} (each fixed set is locally euclidean) with proper action of a Lie group $G$ has the $G$-homotopy type of a countable proper $G$-CW complex.

In particular, we recover E~Elfving's improved thesis (2001): any locally linear $G$-manifold is $G$-homotopy equivalent to a proper $G$-CW complex.
The motivating case of real-analytic $G$-manifolds was studied by S~Illman for decades, who ultimately showed for all Lie $G$ that in fact they admit an essentially unique $G$-triangulation (2000).
In turn, this was an equivariant generalization of the triangulation of differentiable manifolds (Cairns 1934, Whitehead 1940, Whitney 1957).
For topological manifolds, one indeed must work up to homotopy equivalence instead of homeomorphism to a CW complex, since recently there are shown to exist non-triangulable topological manifolds of any dimension $> 3$ (C~Manolescu 2016).

%------------------------------------------------------------------------------
\section{Equivariant absolute neighborhood retracts, I}\label{sec:GANRs1}

\begin{defn}\label{defn:orbit_types}
Let $G$ be topological group.
For a subset $S$ of a $G$-space $X$, write
\[
\cO_G(S) ~:=~ \{ (G_x) ~|~ x \in S \}, \quad\text{where}\quad G_x ~:=~ \{ g \in G ~|~ gx=x \},
\]
for the \textbf{set of $G$-orbit types}, made of the $G$-conjugacy classes of \textbf{isotropy groups}.
For any subgroup $H$ of $G$, the \textbf{$H$-skeleton} ($H$-fixed set) and the \textbf{$H$-stratum} are
\[
S^H ~:=~ \{x \in S ~|~ H \leqslant G_x\} \quad\text{and}\quad S_H ~:=~ \{x \in S ~|~ H = G_x\}.
\]
By a \textbf{$G$-Banach space}, we shall mean a real Banach space $(V,\|\cdot\|)$ equipped with a continuous action $G \x V \longra V$ by isometric linear automorphisms of $(V,\|\cdot\|)$.
\end{defn}

\begin{exm}
Let $p$ be a prime.
Consider the ultrametric $d_p(x,y) := |x-y|_p$ on the compact abelian topological group $\Z_p := \lim_{n\to\infty} \Z/p^n$ induced by the $p$-norm
\[
|z|_p ~:=~ p^{\displaystyle -\sup(\{0\}\cup\{n>0 ~|~ z_n = 0 \in \Z/p^n\})}.
\]
The $\Z_p$-Banach space $C(\Z_p)$ consists of continuous functions $\Z_p \longra \R$ with the sup-norm $\|\cdot\|_\infty$.
There is a compact set in $C(\Z_p)$ with infinitely many orbit types:\footnote{Similarly, for the circle group $U_1$, the sequence $(n \longmapsto f_n)$ with $n$-th term $f_n(z) = \Re(z^n)/n$, which has isotropy group $\Z/n$, converges uniformly in $C(U_1)$ to $f(z) \equiv 0$, which has isotropy $U_1$.}
the closure of the set $\left\{\displaystyle\sum_{n=1}^N \frac{\cos(2\pi z_n / p^n)}{n^2} \right\}_{N>0}$ of partial sums of an infinite series.
It shows the singular set $C(\Z_p)_{sing}$, an $F_\sigma$ subset and linear subspace, is not closed.

Consider Kuratowski's isometric embedding \cite[\S6]{Kuratowski} for the $p$-adic integers:
\[
\Phi: (\Z_p,d_p) \longra (C(\Z_p),\|\cdot\|_\infty) ~;~ x \longmapsto (y \mapsto d_p(x,y)).
\]
For all $x,y \in \Z_p$, note that the midpoint between $\Phi(x)$ and $\Phi(y)$ has trivial isotropy.
Also $\Z_p$ possesses invariant non-ultra metrics averaged from Cantor's discontinuum.
\end{exm}

Recall\footnote{Montgomery and Zippin \cite[1.18]{MZ_book} credit this result to Pontryagin via Weil's book (1940), wherein the latter refers to the former's book (1939).
Later, Pontryagin's second edition credits Kolmogorov for regularity and then unnamed others for complete regularity \cite[\foreignlanguage{russian}{Пример}~32]{Pontryagin2}.} that any topological group is completely regular \cite[\foreignlanguage{russian}{Теорема}~10]{Pontryagin2}.\linebreak
Hence the group is Tikhonov ($T_{3.5}$) if the singleton $\{1\}$ is closed ($T_0$, Kolmogorov).
It has an invariant metric if first-countable $T_0$ by Birkhoff--Kakutani \cite[1.23]{MZ_book}.

\begin{defn}[Palais]\label{defn:Palais}
Let $G$ be a locally compact Hausdorff group.
Let $X$ be a Tikhonov space.
A $G$-action on $X$ is \textbf{Palais} (proper in sense of \cite[1.2.2]{Palais}) if each point $x \in X$ has a neighborhood $U$ such that any $y \in X$ admits a neighborhood $V$ in $X$ satisfying the property that $\langle U,V\rangle_G := \{g \in G ~|~ U \cap gV \neq \varnothing\}$ is precompact.
\end{defn}

Palais' metrization condition \cite[4.3.4]{Palais}  generalizes to a criterion \cite[B]{AdN}.

\begin{lem}[Antonyan--deNeymet]\label{lem:AntonyandeNeymet}
Let $G$ be a locally compact Hausdorff group.
Let $X$ be a metrizable space with Palais $G$-action.
Then $X$ is \textbf{$G$-metrizable} (that is, admits a $G$-invariant metric: $d(gx,gy)=d(x,y)$) if and only if $X/G$ is metrizable.
Moreover, this criterion holds if also $G$ is separable and $X$ is locally separable.
\end{lem}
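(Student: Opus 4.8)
The plan is to treat the two implications separately, with the forward one essentially formal and the reverse one carrying the real weight. For necessity, suppose $d$ is a $G$-invariant metric and set $\ol{d}(Gx,Gy) := \inf_{g\in G} d(x,gy)$; invariance makes this independent of representatives, while symmetry and the triangle inequality are routine. The only nonformal point is that $\ol{d}$ separates distinct orbits, and this reduces to the assertion that every orbit is closed. That in turn I would extract directly from \ref{defn:Palais}: if $g_\alpha y \to x$, then choosing $U\ni x$ and $V\ni y$ with $\langle U,V\rangle_G$ precompact forces $g_\alpha\in\langle U,V\rangle_G$ eventually, so a subnet converges to some $g_0\in G$ and $x=g_0y$. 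Since the orbit map $\pi\colon X\to X/G$ is open, one checks $\pi(B_d(x,r))=B_{\ol d}(Gx,r)$, so $\ol d$ induces the quotient topology and $X/G$ is metrizable.

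For sufficiency I would assume $X/G$ metrizable and lean on two consequences of \ref{defn:Palais}. First, each isotropy group $G_x$ is compact: taking $x\in U$ and $V\ni x$ as in \ref{defn:Palais} gives $G_x\subseteq\langle U,V\rangle_G$, and a closed subgroup inside a precompact set is compact. Second, properness yields a slice theorem, so $X$ is covered by invariant tubes $G\cdot S\cong G\times_{G_x}S$. Because a metrizable space is paracompact, I would pull back a locally finite partition of unity from $X/G$ to a $G$-invariant family $\{\phi_i\}$ subordinate to a locally finite refinement $\{G\cdot S_i\}$ of this tube cover.

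On each tube the compactness of $G_{x_i}$ lets me average a local metric against normalized Haar measure to obtain a $G$-invariant metric $d_i$, bounded by $1$: the closed metrizable orbit $G/G_{x_i}$ carries an invariant metric, which extends across the slice. The remaining task is to assemble the $d_i$ into a single invariant metric, and the cleanest route is an equivariant embedding: package the data $(\phi_i,d_i)$ into a $G$-equivariant map of $X$ into a $G$-Banach space (\ref{defn:orbit_types}) on which $G$ acts by isometries, then pull back the norm metric. Local finiteness should make this a closed $G$-embedding, so the restricted metric is $G$-invariant and induces the original topology, recovering \cite[B]{AdN}.

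For the \emph{moreover} clause, when $G$ is separable and $X$ is locally separable the tube cover and the partition of unity may be taken countable, so the second-countable bookkeeping of Palais' original metrization \cite[4.3.4]{Palais} applies and yields the equivalence in that classical setting. I expect the genuine obstacle to be the sufficiency direction for a merely locally compact, possibly nonmetrizable, $G$: both the slice theorem and the verification that the assembled metric induces the quotient topology, rather than a strictly finer one, must be carried out with no countability crutch, which is precisely where \cite[B]{AdN} improves on \cite[4.3.4]{Palais}.
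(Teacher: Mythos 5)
The paper offers no proof of this lemma at all: it is quoted verbatim as criterion [B] of \cite{AdN}, so the only benchmark is that source. Your necessity direction is correct and is the standard argument: $\ol{d}(Gx,Gy):=\inf_{g\in G}d(x,gy)$ is a well-defined pseudometric, Palais properness forces orbits to be closed (your subnet argument via precompactness of $\langle U,V\rangle_G$ is fine), hence $\ol{d}$ separates distinct orbits, and openness of the orbit map gives $\pi(B_d(x,r))=B_{\ol{d}}(Gx,r)$, so $\ol{d}$ induces the quotient topology.

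The sufficiency direction, which is where the content lies, has genuine gaps. First, you invoke ``a slice theorem'' producing tubes $G\cdot S\cong G\times_{G_x}S$; for a locally compact group that is not Lie this is false in general, and only the approximate slice theorem (\ref{lem:approx_slice}) is available, giving $H$-slices for some compact $H$ that need not equal $G_x$. That is repairable, since your averaging only needs $H$ compact, but the step as written does not exist. Second, the actual construction is deferred to a ``should'': you assert that an invariant metric on the orbit extends across the slice to an invariant $d_i$ on the whole tube, and that the data $(\phi_i,d_i)$ assemble into a closed $G$-embedding into a $G$-Banach space (with \emph{jointly continuous} isometric action, which is already delicate for nonmetrizable $G$) whose pullback metric induces the original topology rather than a strictly finer one. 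These are precisely the hard points of \cite{AdN}; in particular, the hypothesis that $X/G$ is metrizable must enter the construction (e.g.\ as a summand $\pi^*\rho$ of the final metric, to separate points in different orbits and to control the topology transversally to the orbits), and your embedding step never uses it, which is a structural sign the argument is incomplete. Finally, you misread the ``moreover'': as the paper uses it in Proof~\ref{thm:main}, it asserts that when $G$ is separable and $X$ is locally separable the condition ``$X/G$ is metrizable'' is \emph{automatically satisfied} (locally $GS/G\cong S/H$ is metrizable, and one globalizes by paracompactness), not merely that Palais' second-countable bookkeeping applies to the proof.
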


Recall $X$ is a \textbf{$G$-ANR for the class $\cC$} ($\cC$-absolute neighborhood $G$-retract) if $X$ belongs to $\cC$ and, for any closed $G$-embedding of $X$ into a member of $\cC$, there is a $G$-neighborhood of $X$ with $G$-retraction to $X$.
More generally, $X$ is a \textbf{$G$-ANE for the class $\cC$} ($\cC$-absolute neighborhood $G$-extensor) if, for any member $B$ of $\cC$ and closed $G$-subset $A$ of $B$ and any $G$-map $A \longra X$, there is a $G$-extension $U \longra X$ from a $G$-neighborhood $U$ of $A$ in $B$.
A $G$-ANE need not belong to $\cC$.
Write $\cM$ for the class of metrizable spaces and $G$\text{-}$\cM$ for class of $G$-metrizable Palais $G$-spaces.

Dugundji extension \cite[4.2]{Dugundji} equivariantly generalizes to \cite[4.2]{Abels}  \cite{Antonyan_retracts}.
Abels' proof works for semireflexive $K$-normed linear spaces \cite[Cor~VI:2.7]{Bourbaki_integration}.

\begin{lem}[Abels--Antonyan]\label{lem:extensor}
Let $K$ be a compact Hausdorff group.
Any closed convex invariant subset of a $K$-Banach space is an absolute $K$-extensor for $K\text{-}\cM$.
\end{lem}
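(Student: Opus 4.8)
The plan is to reduce to the classical Dugundji extension theorem and then to symmetrize the resulting map over $K$ against normalized Haar measure, exploiting that $C$ is simultaneously closed, convex, and $K$-invariant. Write $V$ for the ambient $K$-Banach space, so that $C$ is a closed convex invariant subset of $V$. Let $B$ be a member of $K\text{-}\cM$, let $A$ be a closed $K$-subset of $B$, and let $f : A \longra C$ be a $K$-map; since $K$ is compact its action on any space is automatically Palais, and $K$-metrizability means in particular that $B$ is metrizable with $A$ closed. I must produce a $K$-map $B \longra C$ extending $f$.

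First I would forget the group and invoke Dugundji \cite[4.2]{Dugundji}: as $C$ is convex in the locally convex space $V$, the metrizable pair $(B,A)$ and the continuous map $f$ admit a continuous extension $F_0 : B \longra C$. Recall that Dugundji's formula realizes $F_0(x)$ as a locally finite convex combination of values of $f$, so it indeed lands in $C$; however $F_0$ is in general not equivariant. To repair this, put
\[
F(x) ~:=~ \int_K k\inv\, F_0(kx)\, dk \qquad (x \in B),
\]
where $dk$ is the probability Haar measure. For each fixed $x$ the integrand $k \longmapsto k\inv F_0(kx)$ is continuous on the compact space $K$ with values in the Banach space $V$, hence bounded and Bochner integrable, so $F(x)$ is well defined.

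The two formal properties are then quick. By the mean-value property of the Bochner integral, $F(x)$ lies in the closed convex hull of $\{\,k\inv F_0(kx) \mid k \in K\,\}$; each element of this set lies in $k\inv C = C$ by invariance, and $C$ is closed and convex, whence $F(x) \in C$. For equivariance, the substitution $k \mapsto kh$ together with the bi-invariance of Haar measure on the unimodular group $K$ and the fact that the isometry $v \mapsto hv$ is bounded linear (so commutes with the integral) give $F(hx) = h\,F(x)$; thus $F$ is a $K$-map. Finally, for $a \in A$ one has $F_0(ka) = f(ka) = k\,f(a)$, since $A$ is invariant and $f$ is equivariant, so $F(a) = \int_K f(a)\, dk = f(a)$ and $F$ extends $f$.

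The step I expect to be the genuine obstacle is the continuity of the symmetrized map $F$. The integrand $(x,k) \longmapsto k\inv F_0(kx)$ is jointly continuous on $B \times K$, and I would deduce continuity of $x \longmapsto \int_K k\inv F_0(kx)\, dk$ from the compactness of $K$: fixing $x_0$ and $\eps$, joint continuity together with a finite subcover of $K$ yields a neighborhood of $x_0$ on which the integrand stays within $\eps$ of its values at $x_0$ uniformly in $k$, and integrating against the probability measure $dk$ bounds $\|F(x)-F(x_0)\|$ by $\eps$. This averaging argument is also precisely the point at which the Banach hypothesis can be relaxed: all one needs is that the Gelfand--Pettis integral exist and obey the mean-value property, which holds for semireflexive normed linear spaces by \cite[Cor~VI:2.7]{Bourbaki_integration}, recovering Abels' more general form \cite[4.2]{Abels}.
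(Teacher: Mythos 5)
Your proof is correct and is essentially the argument behind the paper's citation: the paper does not reprove this lemma but refers to Abels \cite[4.2]{Abels} and Antonyan \cite{Antonyan_retracts}, whose proof is exactly your two-step scheme of Dugundji extension followed by Haar averaging into the closed convex invariant set. Your closing remark about the Gelfand--Pettis integral and semireflexivity even matches the paper's own aside that ``Abels' proof works for semireflexive $K$-normed linear spaces \cite[Cor~VI:2.7]{Bourbaki_integration}.''
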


Next is an induction \cite[Thm~5]{Antonyan3} and a neighborhood version of \cite[4.4]{Abels}.

\begin{lem}[Abels--Antonyan]\label{lem:induction}
Let $G$ be a locally compact Hausdorff group.
A Palais $G$-space $X$ is a $G\text{-ANE}(G\text{-}\cM)$ if $X \in K\text{-ANE}(K\text{-}\cM)$ for all compact $K<G$.
\end{lem}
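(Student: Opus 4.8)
The plan is to verify the extension property defining $G\text{-ANE}(G\text{-}\cM)$ by reducing the problem tube by tube to the compact‑subgroup hypothesis, and then gluing the resulting local extensions over the orbit space. So let $B \in G\text{-}\cM$, let $A$ be a closed $G$-subset of $B$, and let $f : A \longra X$ be a $G$-map; I must produce a $G$-extension over some $G$-neighborhood of $A$ in $B$.

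First I would localize using slices. Since $B$ is a Palais (proper) $G$-space and Tikhonov, the existence of slices for Palais actions of locally compact groups gives each $G$-orbit a tube $G \times_{K} S$, where $K < G$ is the (necessarily compact) isotropy group of a point and $S$ is a $K$-slice; these tubes form a $G$-invariant open cover of $B$. Over one such tube the data restrict transparently: $A \cap (G \cdot S) = G \cdot (A\cap S)$ with $A\cap S$ a closed $K$-invariant subset of $S$, and $f$ there is determined by the single $K$-map $f_S := f|_{A\cap S} : A\cap S \longra X$, where $X$ now carries the restricted $K$-action. Because $K$ is compact, the metrizable $K$-space $S$ is automatically Palais and $K$-metrizable (average any admissible metric over normalized Haar measure), so $S \in K\text{-}\cM$ and $A\cap S$ is a closed $K$-subset. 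The hypothesis $X \in K\text{-ANE}(K\text{-}\cM)$ then extends $f_S$ to a $K$-map $\wh{f}_S : V \longra X$ on a $K$-neighborhood $V$ of $A\cap S$ in $S$, and inducing up by $[g,v] \longmapsto g\,\wh{f}_S(v)$ yields a $G$-map on the open $G$-neighborhood $G\cdot V$ of $A\cap(G\cdot S)$ in $B$. Thus every tube supplies a local $G$-extension directly from the hypothesis, with no further appeal to the internal structure of slices.

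The remaining, and genuinely hard, step is to assemble these tube-wise $G$-extensions into a single $G$-extension on a $G$-neighborhood of $A$: the local maps need not agree on overlaps, and $X$ is not assumed convex, so one cannot simply average them. Here I would use that $B/G$ is metrizable by \ref{lem:AntonyandeNeymet} (as $B$ is $G$-metrizable and Palais), hence paracompact, in order to pull back a locally finite $G$-invariant partition of unity subordinate to the tube cover. The patching is then an equivariant Hanner-type induction over a locally finite refinement — equivalently, the neighborhood form of the union theorem for proper $G\text{-ANE}$s — in which one extends across one further tube at each stage, shrinking the neighborhood of $A$ to keep the partial extensions compatible and to preserve local finiteness downstairs. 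I expect this gluing to be the principal obstacle, both because the bookkeeping of nested $G$-neighborhoods must be controlled uniformly along orbits and because maintaining equivariance throughout the inductive amalgamation (rather than merely on each slice) is precisely where the reduction to compact $K$ must be invoked a second time: the compact-isotropy slices guarantee that each elementary two-tube amalgamation again reduces to a $K$-equivariant extension to which the hypothesis applies.
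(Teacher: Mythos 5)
First, a caveat: the paper does not prove this lemma at all --- it is quoted from \cite[Thm~5]{Antonyan3} and (in neighborhood form, for maximal compact subgroups) from \cite[4.4]{Abels} --- so your sketch can only be measured against those sources. Your skeleton (localize over tubes with compact structure group, apply the hypothesis slice-wise, assemble over the orbit space) is the standard one, but two points need repair. The smaller one: for a general locally compact Hausdorff $G$ there is no slice at the isotropy group; Palais' slice theorem \cite[2.3.1]{Palais} requires $G$ Lie. You must instead invoke the approximate slice theorem (\ref{lem:approx_slice}), which produces an $H$-slice for some compact $H$ typically strictly larger than $G_x$. This costs nothing, since your tube-wise reduction only uses compactness of the structure group, but as written your first step appeals to a statement that is false in the stated generality.

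The genuine gap is the assembly. A partition of unity subordinate to the tube cover cannot combine your independently constructed local extensions $\wh f_{S_i}$: the target $X$ carries no convex structure and is not yet known to be an extensor, so convex combination on overlaps is meaningless, and deforming one local extension to agree with another presupposes exactly the local extension/homotopy properties of $X$ you are trying to establish --- this is circular. The workable induction (hinted at in your last sentence, but not what the partition-of-unity framing describes) never reconciles two independent extensions. After reducing the locally finite cover of the paracompact space $B/G$ (metrizable by \ref{lem:AntonyandeNeymet}) to a countable ordered one via Milnor's trick, as in \ref{lem:JamesSegal2}, one extends the \emph{single} partial map already built: at stage $n+1$ the map given by $f$ on $A\cap S_{n+1}$ and by the shrunken stage-$n$ extension on its closed domain is an $H_{n+1}$-map from a closed $H_{n+1}$-subset of the $H_{n+1}$-metrizable slice $S_{n+1}$ into $X$, and the hypothesis $X\in H_{n+1}\text{-ANE}(H_{n+1}\text{-}\cM)$ extends it; compatibility is then automatic rather than arranged afterwards. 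One must still check that the nested $G$-invariant shrinkings can be performed so that the intersection of the stages remains a $G$-neighborhood of $A$. As the proposal stands, the central step is deferred rather than proved. Note also that Abels' route avoids gluing entirely when $G$ admits a maximal compact subgroup $K$, by producing a global $K$-slice (cf.\ \ref{rem:Abels}); the general locally compact case you are reconstructing is Antonyan's.
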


\begin{rem}[Abels]\label{rem:Abels}
This holds if true on maximal compact $K<G$, if they exist.
\end{rem}

\begin{prop}\label{prop:hull}
Let $G$ be a locally compact Hausdorff group.
Let $X$ be a Palais $G$-subset of a $G$-Banach space $V$.
Suppose that $X$ is a $G$-neighborhood retract of its closed convex hull $\cco(X)$ in $V$.
Then $X$ is a $G\text{-ANR}$\footnote{A sequence of similar, but non-equivariant, criteria are exercise-statements in \cite[III:\S6]{Hu}.} for the class $G\text{-}\cM$.
\end{prop}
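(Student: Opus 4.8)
The plan is to exhibit $X$ as a $G\text{-ANE}(G\text{-}\cM)$ and then upgrade this to the $G$-ANR property, using the two Abels--Antonyan lemmas as the engine. First I would check that $X$ itself belongs to the class $G\text{-}\cM$. The norm metric $d(u,v) := \|u-v\|$ on $V$ is $G$-invariant because $G$ acts by linear isometries, so its restriction is a $G$-invariant metric on $X$; and $X$ is Palais by hypothesis. Hence $X \in G\text{-}\cM$, which is the membership half of the $G$-ANR definition.

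The core step is to show $X \in G\text{-ANE}(G\text{-}\cM)$. Since $X$ is a Palais $G$-space, the induction Lemma~\ref{lem:induction} reduces this to verifying $X \in K\text{-ANE}(K\text{-}\cM)$ for every compact subgroup $K < G$. So fix such a $K$. Restricting the action makes $V$ a $K$-Banach space and $\cco(X)$ a closed convex $K$-invariant subset thereof, so the extensor Lemma~\ref{lem:extensor} shows the hull $\cco(X)$ is an absolute $K$-extensor for $K\text{-}\cM$. Now I would feed in the hypothesis: the $G$-neighborhood $U$ of $X$ in $\cco(X)$ (which we may take open, since $r$ restricts to the interior) and the $G$-retraction $r : U \to X$ are in particular $K$-invariant and $K$-equivariant. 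Given any $B \in K\text{-}\cM$, any closed $K$-subset $A \subseteq B$, and any $K$-map $f : A \to X$, compose with the inclusion $X \hookrightarrow \cco(X)$ and extend to a $K$-map $F : B \to \cco(X)$ by the absolute extensor property. Then $F\inv(U)$ is an open $K$-neighborhood of $A$ (because $f(A) \subseteq X \subseteq U$), and the composite $r \circ F$ restricted to $F\inv(U)$ is the desired $K$-extension of $f$, since $r$ fixes $X$ pointwise. This gives $X \in K\text{-ANE}(K\text{-}\cM)$, and Lemma~\ref{lem:induction} upgrades it to $X \in G\text{-ANE}(G\text{-}\cM)$.

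Finally I would pass from the extensor to the retract conclusion. Since $X \in G\text{-}\cM$, take any closed $G$-embedding of $X$ into a member $B$ of $G\text{-}\cM$ and apply the $G$-ANE property to the identity $\id_X$, regarded as a $G$-map on the closed $G$-subset $A := X$ of $B$. The resulting $G$-extension $U' \to X$ over a $G$-neighborhood $U'$ of $X$ in $B$ restricts to the identity on $X$, hence is a $G$-retraction. Therefore $X$ is a $G$-ANR for $G\text{-}\cM$, as claimed.

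I expect the main obstacle to be the bookkeeping in the middle step rather than any new idea: confirming that the neighborhood–retract data restrict correctly to each compact $K$, and that $r \circ F$ is genuinely $K$-equivariant and defined on an honest $K$-neighborhood of $A$. The substantive content is imported from Lemmas~\ref{lem:extensor} and~\ref{lem:induction}. A secondary point to keep in view is that $\cco(X)$ need not itself be a Palais $G$-space, but this never intervenes, because $\cco(X)$ is used only as an absolute extensor, whose target is not required to lie in the class $G\text{-}\cM$.
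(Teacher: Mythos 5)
Your proposal is correct and follows essentially the same route as the paper: extend a $K$-map into $\cco(X)$ via Lemma~\ref{lem:extensor}, pull back the retraction neighborhood, compose with $r$ to get $X \in K\text{-ANE}(K\text{-}\cM)$ for every compact $K<G$, then apply Lemma~\ref{lem:induction} and observe that the invariant norm metric places $X$ in $G\text{-}\cM$, so that $G\text{-ANE}(G\text{-}\cM)\cap(G\text{-}\cM)\subseteq G\text{-ANR}(G\text{-}\cM)$. Your closing remarks (that $\cco(X)$ need not be Palais but is only used as an extensor target, and the explicit ANE-to-ANR step) are accurate elaborations of points the paper leaves implicit.
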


In the proof, note that any closed convex Palais $G$-subset of $V$ is a $G\text{-AE}(G\text{-}\cM)$.
A~Feragen found $\R^n \in G\text{-AE}(X)$ for Palais $X \in T_5$ with $X/G$ paracompact \cite{Feragen}. % 3.1 in [Fer08]

\begin{proof}
There exist a $G$-neighborhood $U'$ of $X$ in $\cco(X)$ and a $G$-retraction $r: U' \longra X$.
Let $K$ be a compact subgroup of $G$.
Suppose $A$ is a closed $K$-subset of a $K$-metrizable $K$-space $B$ and $f: A \longra X \subset \cco(X)$ is a $K$-map.
By Lemma~\ref{lem:extensor}, there exists a $K$-extension $g: B \longra \cco(X)$ of $f$.
Note $U := g\inv(U')$ is a $K$-neighborhood of $A$ in $B$.
Then $r\circ g|_U: U \longra X$ is a $K$-extension of $f$.
Thus $X \in K\text{-ANE}(K\text{-}\cM)$.
Therefore, by Lemma~\ref{lem:induction}, and since the $G$-norm on $V$ induces a $G$-metric on the Palais $G$-space $X$, we obtain $X \in G\text{-ANE}(G\text{-}\cM) \cap (G\text{-}\cM) \subseteq G\text{-ANR}(G\text{-}\cM)$.
\end{proof}

Suitable for our purposes, although there are subsequent variations, the inheritance to fixed sets of being an absolute neighborhood retract is shown in \cite{Smirnov}.

\begin{lem}[Smirnov]\label{lem:Palais}
Let $G$ be a metrizable group.
Suppose $X$ is a $G$-ANR for the class $G\text{-}\cM$.
For any closed subgroup $K$ of $G$, the $K$-skeleton $X^K$ is in $\text{ANR}(\cM)$.
\end{lem}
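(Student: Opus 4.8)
The plan is to show $X^K$ is an absolute neighborhood extensor for $\cM$; since $X^K$ is metrizable as a subspace of $X$, this yields $X^K\in\text{ANR}(\cM)$ (an ANR being a metrizable ANE). The crucial preliminary is a dichotomy: either $X^K=\varnothing$, hence vacuously an ANR, or $K$ is compact. For the latter, choose $x\in X^K$, so $K\leqslant G_x$. Since the action is Palais, apply \ref{defn:Palais} to the point $x$ to get a neighborhood $U$, and then to $y=x$ to get a neighborhood $V$, so that $\langle U,V\rangle_G$ is precompact; as $gx=x$ forces $x\in U\cap gV$, we have $G_x\subseteq\langle U,V\rangle_G$, and the closed group $G_x$ is therefore compact. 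Being closed in $G$ and contained in $G_x$, the subgroup $K$ is then a closed subgroup of a compact group, hence compact. So we assume $K$ compact henceforth.

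Given $B\in\cM$, a closed subset $A$, and a map $f:A\longra X^K$, I must extend $f$ over a neighborhood of $A$. The device is to convert this into the $G$-extensor property of $X$ by inducing from the compact subgroup $K$. Equip $B$ with the trivial $K$-action, which is Palais exactly because $K$ is compact, and under which every metric on $B$ is invariant, so $B\in K\text{-}\cM$. The twisted product $G\times_K B$ is metrizable, carries a Palais $G$-action, and has orbit space $(G\times_K B)/G=B/K=B$ metrizable, so by \ref{lem:AntonyandeNeymet} it lies in $G\text{-}\cM$; likewise $G\times_K A$ is a closed $G$-subset. Because $f(A)\subseteq X^K$, the rule $[g,a]\longmapsto g\cdot f(a)$ is a well-defined $G$-map $\wt f:G\times_K A\longra X$. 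Now invoke the hypothesis $X\in G\text{-ANE}(G\text{-}\cM)$ to extend $\wt f$ to a $G$-map $F:\wt U\longra X$ over a $G$-neighborhood $\wt U$ of $G\times_K A$. Each $[e,b]$ is $K$-fixed, since $k\cdot[e,b]=[k,b]=[e,b]$, and these points form a copy of $B$ meeting $\wt U$ in a neighborhood $U$ of $A$. For $b\in U$ and $k\in K$, equivariance gives $F([e,b])=F(k\cdot[e,b])=k\cdot F([e,b])$, so $F([e,b])\in X^K$, while $F([e,a])=f(a)$; thus $b\longmapsto F([e,b])$ is the required extension $U\longra X^K$.

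The main obstacle is the non-compact case, and the dichotomy is precisely what removes it: the trivial-action trick that forces the extended map back into the fixed set works only for compact groups, because a trivial action of a non-compact $K$ is not Palais. The remaining labor is bookkeeping --- confirming, via \ref{lem:AntonyandeNeymet}, that the twisted product $G\times_K B$ belongs to $G\text{-}\cM$, that $\wt f$ is well defined, and that restricting the $G$-extension to the $K$-fixed slice $\{[e,b]\}$ lands in $X^K$, the last being where equivariance is essential.
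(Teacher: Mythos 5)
The paper does not actually prove this lemma --- it only cites Smirnov --- so there is no internal proof to compare against; what you have written is, as far as I can tell, the standard induction argument behind that citation, and it is essentially correct. Your two structural moves are exactly the right ones: first, the observation that a Palais action has compact isotropy groups (the closed set $G_x$ sits inside the precompact $\langle U,V\rangle_G$), so that $X^K\neq\varnothing$ forces the closed subgroup $K$ to be compact and the statement is vacuous otherwise; second, the twisted-product device $G\times_K B\cong (G/K)\times B$ (the isomorphism holding because the $K$-action on $B$ is trivial), which converts the nonequivariant extension problem for $X^K$ into an equivariant one for $X$, with $G$-equivariance of the resulting extension forcing its values on the $K$-fixed slice $\{[e,b]\}$ back into $X^K$. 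Your checks that $(G/K)\times B$ is metrizable, Palais, and hence in $G\text{-}\cM$ via \ref{lem:AntonyandeNeymet} are all needed and all go through. The one step you pass over silently is the substitution of ``$X$ is a $G$-ANE for $G\text{-}\cM$'' for the actual hypothesis ``$X$ is a $G$-ANR for $G\text{-}\cM$'': with the retract-based definition given in Section \ref{sec:GANRs1} this implication is not formal, but requires a closed $G$-embedding of $X$ into a member of $G\text{-}\cM$ that is a $G$-AE (an equivariant Kuratowski--Wojdys{\l}awski--Antonyan embedding into a $G$-Banach space together with the Abels--Antonyan extension of Lemma \ref{lem:extensor}; Proposition \ref{prop:hull} uses only the easy converse inclusion). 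The paper's introduction declares that one may define $G$-ANR as $G$-metrizable $G$-ANE, so your usage is consistent with its conventions, but in a self-contained proof you should cite that embedding theorem explicitly; with that single reference supplied, your argument is complete.
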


%------------------------------------------------------------------------------
\section{Non-equivariant construction of $G$-maps}

\begin{defn}\label{defn:slice}
Let $G$ be a topological group, and let $X$ be a $G$-space.
Let $H$ be a subgroup of $G$.
A subset $S$ of $X$ is an \textbf{$H$-slice} if $G S$ is open in $X$ and there exists a $G$-map $f: GS \longra G/H$ such that $S = f\inv(H)$, the preimage of the trivial coset. 
\end{defn}

R~Palais' slice theorem \cite[2.3.1]{Palais} has an approximate version without assuming that $G$ has no small subgroups.
H~Abels was the first to consider non-Lie $G$ \cite[3.3]{Abels} but excluded $O$ below, which we need for Lemma~\ref{lem:comparison}.
Independently, H~Biller \cite[3.8]{Biller_cm} \cite[2.5]{Biller_proper} and S~Antonyan \cite[3.6]{Antonyan2}  prescribed such an $O$.
For any $G$, proper in the senses of Cartan, Bourbaki, and Palais are in \cite[1.1]{Biller_proper}.

\begin{lem}[Abels--Biller--Antonyan]\label{lem:approx_slice}
Let $G$ be a locally compact Hausdorff group.
Let $X$ be a Tikhonov space equipped with a Palais-proper $G$-action.
Let $O$ be a given neighborhood of some point $x$ in $X$.
There exist a compact subgroup $H$ of $G$, a normal subgroup $N \leqslant H$ of $G$ with $G/N$ a Lie group, and an $H$-slice $S$ in $X$ such that $x \in S \subset O$.
($S$ becomes a \textbf{slice at $x$} if $H=G_x$.)
\end{lem}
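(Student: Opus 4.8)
The plan is to divide out a small compact subgroup so as to make the acting group a Lie group, to quote R~Palais' slice theorem there, and then to lift the resulting slice back to $X$. First I would apply the Gleason--Yamabe approximation for locally compact groups \cite{MZ_book}: inside any prescribed neighborhood of $1 \in G$ there are an open subgroup $G'$ of $G$ together with a compact subgroup $N \normal G'$ such that the quotient $G'/N$ is a Lie group. Using continuity of the action at $(1,x)$ I would choose $N$ so small that $Nx \subseteq O$, and then, since $N$ is compact, replace $O$ by an $N$-invariant open neighborhood $O_0$ of the compact orbit $Nx$ with $Nx \subseteq O_0 \subseteq O$. As $G'$ is open it is also closed, so the restricted $G'$-action on $X$ is again Palais-proper.

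Next I would descend along $N$. Since $N$ is compact, the orbit map $\pi : X \longra X/N$ is an open, closed, proper quotient; hence $X/N$ is again Tikhonov, and the induced action of the Lie group $\ol{G'} := G'/N$ on $X/N$ is Palais-proper, because precompactness of the transporter sets $\langle U,V\rangle_{G'}$ descends through the compact kernel $N$. Writing $\ol{x} := \pi(x)$, the isotropy group $\ol{G'}_{\ol{x}} = G'_x N / N$ is compact. Now I would invoke Palais' slice theorem \cite[2.3.1]{Palais} for the proper action of the \emph{Lie} group $\ol{G'}$ on the Tikhonov space $X/N$: there are a compact subgroup $\ol{H} \leqslant \ol{G'}$ and an $\ol{H}$-slice $\ol{S} \subseteq X/N$ at $\ol{x}$, i.e.\ a $\ol{G'}$-map $\ol{f} : \ol{G'}\,\ol{S} \longra \ol{G'}/\ol{H}$ with $\ol{S} = \ol{f}\inv(\ol{H})$, and I may take $\ol{S} \subseteq \pi(O_0)$ since slices can be shrunk.

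Pulling back, set $H := \pi_{G'}\inv(\ol{H})$ (where $\pi_{G'} : G' \longra \ol{G'}$), $S := \pi\inv(\ol{S})$, and $f := \ol{f} \circ \pi$. Then $H$ is compact, being an extension of $\ol{H}$ by $N$; it contains $N \normal G'$; and under the canonical identification $\ol{G'}/\ol{H} \iso G'/H$ the $G'$-map $f : G'S \longra G'/H$ satisfies $S = f\inv(H)$, since $f\inv(H) = \pi\inv(\ol{f}\inv(\ol H)) = \pi\inv(\ol S) = S$. Moreover $S = \pi\inv(\ol{S}) \subseteq \pi\inv\pi(O_0) = O_0 \subseteq O$ and $x \in S$. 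Thus $S$ is an $H$-slice for the open subgroup $G'$, with $x \in S \subseteq O$, and $H = G'_x N$ reduces to $G_x$ precisely when $N \leqslant G_x \leqslant G'$, giving the parenthetical genuine-slice case.

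The main obstacle is the passage from the open subgroup $G'$ to all of $G$, which is also where the normality of $N$ in the stated conclusion must be read. Because $G'$ is open, $GS = \bigcup_{gG' \in G/G'} g(G'S)$ is open, and the relation $S = f\inv(H)$ forces the thinness $gS \cap S \neq \varnothing \Rightarrow g \in H$ for $g \in G'$; the delicate point is to promote this to all $g \in G$, so that $G \x_H S \longra GS$ becomes a $G$-homeomorphism and $f$ extends $G$-equivariantly to a map $GS \longra G/H$ with $S = f\inv(H)$. By Palais-properness the transporter $\langle S,S\rangle_G$ is precompact, and after shrinking $S$ within $O_0$ and enlarging $G'$ to absorb the compact group $G_x$, properness confines $\langle S,S\rangle_G$ to a neighborhood of $G_x$ inside $G'$, which removes any re-entry from $G \setminus G'$ and completes the upgrade. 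Verifying these reductions, together with the descent of properness and of the separation axioms to $X/N$, is precisely the content supplied by Abels \cite[3.3]{Abels}, Biller \cite[3.8]{Biller_cm}\,\cite[2.5]{Biller_proper}, and Antonyan \cite[3.6]{Antonyan2}.
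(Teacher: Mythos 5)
The paper offers no proof of this lemma; it is imported from Abels \cite[3.3]{Abels}, Biller \cite[3.8]{Biller_cm} \cite[2.5]{Biller_proper}, and Antonyan \cite[3.6]{Antonyan2}, so there is no in-paper argument to compare against. Your reconstruction --- van Dantzig/Gleason--Yamabe to get an open almost connected $G' \leqslant G$ with arbitrarily small compact $N \normal G'$ and $G'/N$ Lie, descent to $X/N$, Palais' slice theorem for the Lie quotient, pullback, and extension of the resulting $G'$-slice to a $G$-slice --- is exactly the strategy of those sources, and the descent/pullback portion is correct as written.

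Two items still need to be supplied, and one mismatch should be made explicit. (a) ``Enlarging $G'$ to absorb $G_x$'' is true but not free: one must show that every compact subgroup of a locally compact group lies in an open almost connected subgroup (pass to $G/G^e$, take a compact open subgroup $U$ there, observe that $\bigcap_{k} kUk^{-1}$, taken over $k$ in the image of $G_x$, is open and normalized by that image, multiply, and pull back). (b) The confinement $\langle S,S\rangle_G \subseteq G'$ requires an actual argument: if every neighborhood $W \subseteq W_0$ of $x$ admitted some $g_W \notin G'$ with $g_W W \cap W \neq \varnothing$, then a convergent subnet of $(g_W)$ inside the compact set $\ol{\langle W_0,W_0\rangle_G}$ would converge to an element of $G_x \subseteq G'$, contradicting that $G \setminus G'$ is closed; this is where both the openness of $G'$ and the containment $G_x \subseteq G'$ are used, and the slice must then be rebuilt inside $W \cap O$ with $N$ chosen small enough that $Nx \subseteq W$. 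Finally, as you yourself flag, your $N$ is normal only in $G'$: the clause ``$N$ normal in $G$ with $G/N$ Lie'' in the statement is strictly stronger and cannot hold for arbitrary locally compact $G$ (for $G = SL_2(\Q_p)$ acting properly on its Bruhat--Tits tree there is no compact normal subgroup with Lie quotient at all), though it does hold when $G$ is almost connected. Since every application of the lemma in this paper (\ref{lem:comparison}, \ref{lem:Banach_simplex}, \ref{exm:cpt_FUI}) uses only the compact subgroup $H$ and the $H$-slice $S$, your weaker conclusion suffices for the paper's purposes.
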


Palais' proof of the Montgomery--Zippin neighboring-subgroups theorem \cite[4.2.1]{Palais} now similarly works, replacing the use of his slice theorem \cite[2.3.1]{Palais} for Lie groups $G$ with the above approximate one, at the expense of a weaker conclusion.

\begin{cor}\label{cor:MontgomeryZippin}
Let $G$ be a locally compact Hausdorff group.
Let $H$ be a compact subgroup of $G$.
Let $U$ of $1$, and $O$ of $H$, be neighborhoods in $G$.
There exist both a subneighborhood $V \subset O$ of $H$ in $G$ and a compact subgroup $H' \subset O$ of $G$, such that any compact subgroup of $G$ contained in $V$ is $U$-conjugate into $H' \geqslant H$.
\qed
\end{cor}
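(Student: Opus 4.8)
The plan is to reduce everything to a Lie-group quotient by means of Lemma~\ref{lem:approx_slice} and there to quote Palais' neighboring-subgroups theorem \cite[4.2.1]{Palais} for honest Lie groups; the weaker conclusion (conjugation into $H' \geqslant H$ rather than into $H$ itself) is precisely the price exacted by the nontrivial kernel that the approximate slice must produce. First I record the relevant proper action: since $H$ is compact, the left translation action of $G$ on the coset space $G/H$ is Palais-proper \cite[1.1]{Biller_proper} and $G/H$ is Tikhonov, while the base point $x_0$ (the image of $1$) has isotropy group $G_{x_0} = H$. Applying Lemma~\ref{lem:approx_slice} at $x_0$ with a sufficiently small neighborhood $O'$ of $x_0$, I obtain a normal subgroup $N \normal G$ with $\ol{G} := G/N$ a Lie group and with $N$ contained in any prescribed symmetric neighborhood $W$ of $1$; choosing $W$ so that $HW \subset O$ (possible as $H$ is compact and $O$ open), I set $H' := HN$. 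Then $H'$ is a compact subgroup with $H \leqslant H' \subset O$, and, writing $q: G \longra \ol{G}$ for the projection, $\ol{H} := q(H) = H'/N$ is a compact subgroup of the Lie group $\ol{G}$ with $q\inv(\ol{H}) = H'$.

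Next I run Palais' argument downstairs. The theorem \cite[4.2.1]{Palais}, valid for the genuine Lie group $\ol{G}$ and its compact subgroup $\ol{H}$, supplies for the neighborhoods $q(U)$ of $\ol{1}$ and $q(O)$ of $\ol{H}$ a neighborhood $\ol{V} \subset q(O)$ of $\ol{H}$ such that every compact subgroup of $\ol{G}$ lying in $\ol{V}$ is $q(U)$-conjugate into $\ol{H}$. (Concretely this is Palais' own argument: such a compact subgroup acts on $\ol{G}/\ol{H}$ with a small orbit of the trivial coset, and the center-of-mass on a linear slice there---available because $\ol{G}$ is Lie---yields a nearby fixed point, hence a conjugating element close to $\ol{1}$.) This is exactly the step in which Palais' slice theorem \cite[2.3.1]{Palais} is invoked, now legitimately in $\ol{G}$, its failure upstairs having been absorbed into Lemma~\ref{lem:approx_slice}.

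Finally I transport the conclusion back to $G$. Put $V := q\inv(\ol{V}) \cap O$, a neighborhood of $H$ contained in $O$, and let $K$ be any compact subgroup of $G$ with $K \subset V$. Then $\ol{K} := q(K)$ is a compact subgroup of $\ol{G}$ inside $\ol{V}$, so there is $\ol{g} \in q(U)$ with $\ol{g}\inv \ol{K}\, \ol{g} \leqslant \ol{H}$; since $q$ is open I lift $\ol{g}$ to some $g \in U$, and then $g\inv K g \leqslant q\inv(\ol{H}) = H'$, as required. I expect the main obstacle to be the control asserted in the first paragraph---that the normal subgroup $N$ delivered by Lemma~\ref{lem:approx_slice} can be pinned inside the prescribed $W$, so that $H' = HN$ lands inside $O$. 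This rests on the freedom, built into the construction of the approximate slice (which begins by choosing such an $N$ as small as desired), to shrink $N$; and it is precisely what distinguishes the present all-Lie-$G$ statement from the Lie-group case, where one may take $N = 1$ and recover Palais' sharp conclusion $H' = H$.
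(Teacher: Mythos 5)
Your proof is correct in outline but takes a genuinely different route from the paper's. The paper's proof (the paragraph preceding the statement) is to re-run Palais' own argument for \cite[4.2.1]{Palais} upstairs in $G$, acting on $G/H$, substituting the approximate slice theorem (\ref{lem:approx_slice}) for the exact slice theorem \cite[2.3.1]{Palais} at the one point where the latter is invoked; the enlargement from $H$ to $H'$ is then the compact group attached to the approximate slice at the coset $eH$. You instead discard the slice entirely, keep only the compact normal subgroup $N \normal G$ with $G/N$ Lie, push the whole problem down to the Lie group $\ol{G}=G/N$, quote Palais' theorem there as a black box, and pull the conjugating element back through the open homomorphism $q$. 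This is more modular --- one never has to inspect the internals of Palais' proof --- but it leans entirely on the existence and control of $N$, which is where your argument has its one soft spot.

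That soft spot: you assert that $N$ can be confined to a prescribed neighborhood $W$ of $1$. Lemma~\ref{lem:approx_slice} as stated does not give this, and for a general locally compact group arbitrarily small compact normal subgroups with Lie quotient need not exist, so this claim cannot be taken for granted. What the lemma does give, applied to the translation action on $G/H$ at $x_0=eH$ with $O'$ the image of $W$ in $G/H$, is a compact $H_{\mathrm{slice}}$ with $N \leqslant H_{\mathrm{slice}}$ and $H_{\mathrm{slice}}\cdot x_0 \subseteq S \subseteq O'$, hence only $N \subseteq WH$ --- a neighborhood of $H$, not of $1$. Fortunately this weaker confinement is all you need: by compactness of $H$ choose $W$ with $HWH \subseteq O$ rather than $HW\subseteq O$, and then $H' := HN \subseteq HWH \subseteq O$ as desired; the remainder of your argument (passing to $\ol{G}$, applying \cite[4.2.1]{Palais} to $\ol{H}=q(H)$, setting $V=q\inv(\ol{V})\cap O$, and lifting $\ol{g}\in q(U)$ to $g\in U$) goes through verbatim. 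You correctly identified this as the crux; the repair is only to replace ``$N\subseteq W$'' by ``$N\subseteq WH$'' throughout.
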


\begin{rem}
Consider the additive group $G = \Z_p$ of $p$-adic integers.
If $U=G$ and $H=0$, then $H' \neq H$, since there are nontrivial subgroups of $\Z_p$ that are arbitrarily close to the trivial subgroup.
However $H'=H$ works for any Lie $G$ \cite[4.2.1]{Palais}.
\end{rem}

The following rigidity lemma is a consequence of the approximate-slice theorem.

\begin{lem}\label{lem:comparison}
Let $G$ be a locally compact Hausdorff group.
Let $X$ be a $G$-set.
Write $q: X \to X/G$ for $x \mapsto Gx$.
Let $\cT_0 \subseteq \cT_1$ be $G$-invariant topologies on $X$ with $(X,\cT_0)$ Hausdorff and $(X,\cT_1)$ both Tikhonov and Palais.
If $q_*(\cT_0) = q_*(\cT_1)$ then~$\cT_0 = \cT_1$.
\end{lem}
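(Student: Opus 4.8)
The plan is to establish the reverse inclusion $\cT_1 \subseteq \cT_0$, i.e.\ that the identity $(X,\cT_0) \to (X,\cT_1)$ is continuous; equivalently, given a $\cT_1$-open set $O$ and a point $x \in O$, to produce a $\cT_0$-open $W$ with $x \in W \subseteq O$. First I would record two soft consequences of the hypotheses. Since every translate of a $\cT_i$-open set is $\cT_i$-open, each orbit map $q$ is open, and for any $\cT_1$-open $O$ the saturation $GO = q\inv(q(O))$ is $G$-invariant and $\cT_1$-open; because $q(O) \in q_*(\cT_1) = q_*(\cT_0)$ and $q$ is $\cT_0$-continuous, the set $GO$ is therefore $\cT_0$-open. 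Call this fact $(\ast)$. Second, the $\cT_0$-action is \emph{separately} continuous: each $g$ is a $\cT_0$-homeomorphism by $G$-invariance, while $g \mapsto gx$ factors as $G \to (X,\cT_1) \to (X,\cT_0)$, namely the $\cT_1$-orbit map followed by the continuous identity, and so is $\cT_0$-continuous.

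Next I would localize using the approximate-slice theorem (Lemma~\ref{lem:approx_slice}) applied to the Palais space $(X,\cT_1)$ at $x$, with the given $O$: this yields a compact $H = G_x$, a normal $N \normal G$ with $N \leqslant H$ and $G/N$ Lie, and an $H$-slice $S$ with $x \in S \subseteq O$, together with a $\cT_1$-continuous $G$-map $f \colon GS \to G/H$ satisfying $S = f\inv(H)$. Then $GS$ is $\cT_1$-open and saturated, hence $\cT_0$-open by $(\ast)$, and $GS \cong_{\cT_1} G \times_H S$ is a tube with orbit space $GS/G \cong S/H$. Because $N$ is a compact normal subgroup with $G/N$ Lie, the bundles $G \to G/N$ (principal, compact fibre) and $G/N \to G/H$ (Lie) admit local sections, so $G \to G/H$ has a continuous local section $\sigma \colon A \to G$ over a neighborhood $A$ of the trivial coset, giving a $\cT_1$-trivialization $f\inv(A) \cong A \times S$ via $(a,s) \mapsto \sigma(a)s$.

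I would then treat two directions separately. For the base direction (and the compact $H$-orbit), note that if $P \subseteq S$ is $H$-invariant and $\cT_1$-open, then $GP \cong G \times_H P$ is $\cT_1$-open and saturated, hence $\cT_0$-open by $(\ast)$; combined with the fact that $H$ is compact, so that $q_S \colon S \to S/H$ is a closed map with compact fibres, a Wallace-type argument produces saturated $\cT_0$-open neighborhoods of the compact orbit $Hx$ inside any prescribed $\cT_1$-neighborhood, and compactness of $Hx$ lets one arrange such a neighborhood to lie in $O$. The remaining transverse direction is to cut in the $G/H$-factor by a $\cT_0$-open set, i.e.\ to show that $x \notin \overline{f\inv(K)}^{\,\cT_0}$ for the $\cT_1$-closed set $f\inv(K)$, where $K := (G/H) \setminus A$; intersecting a base cut with such a transverse cut and using the trivialization would then yield the desired $W \subseteq O$.

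The main obstacle is exactly this transverse step. Unlike the base direction, $f\inv(K)$ is not $G$-saturated, so $(\ast)$ gives no purchase, and since $\cT_0$ is coarser a $\cT_0$-convergent net need not converge in $\cT_1$, so Palais-properness of $\cT_1$ cannot be applied to such a net directly. The plan to overcome this is to upgrade the separately continuous $\cT_0$-action to a jointly continuous one along the compact directions, using local compactness of $G$ together with compactness of $H$, in the spirit of an Ellis--Namioka separate-implies-joint continuity theorem, and then to push the $\cT_1$-local section $\sigma$ through the identity map so as to realize the tube retraction $y \mapsto \sigma(f(y))\inv y$ as a $\cT_0$-continuous map near $x$. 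This makes $f$ itself $\cT_0$-continuous near $x$, so $f\inv(A)$ supplies the required transverse $\cT_0$-open cut, completing the argument. I expect verifying this joint-continuity upgrade, equivalently the $\cT_0$-openness of the identity in the fibre direction, to be the delicate heart of the proof.
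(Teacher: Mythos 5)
Your reduction to showing $\cT_1\subseteq\cT_0$ locally at each $x\in O$, your observation $(\ast)$ that $G$-saturated $\cT_1$-open sets are $\cT_0$-open, and your use of the approximate-slice theorem (\ref{lem:approx_slice}) all match the second half of the paper's argument. But the proposal is not a proof: you explicitly leave the ``transverse'' step --- $\cT_0$-continuity of $f$, equivalently of the tube retraction $y\mapsto\sigma(f(y))\inv y$, near $x$ --- as ``the delicate heart,'' and the tool you propose for it is unlikely to apply. Ellis--Namioka-type separate-to-joint continuity theorems require the phase space to be locally compact, or \v{C}ech-complete/Baire; here $(X,\cT_0)$ is only assumed Hausdorff, and since $\cT_0$ is \emph{coarser} than $\cT_1$ you cannot transfer such properties from $(X,\cT_1)$. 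The step on which your construction of $W$ hinges is therefore unsupported, and your ``base direction'' via $(\ast)$ only produces $G$-saturated $\cT_0$-open sets, which cannot by themselves be trimmed to lie inside $O$.

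The paper resolves exactly this difficulty with two ingredients your sketch lacks. First, a self-contained compact case: for $G$ compact, the orbit map $q\colon(X,\cT_1)\to(X/G,q_*\cT_1)$ is Bourbaki-proper; since $q_*\cT_0=q_*\cT_1$, it factors as the composite of $\id_X\colon(X,\cT_1)\to(X,\cT_0)$ with $q\colon(X,\cT_0)\to(X/G,q_*\cT_0)$, and because $(X,\cT_0)$ is Hausdorff the first factor $\id_X$ is itself proper \cite[Proposition~I:10.5d]{Bourbaki}, hence a closed continuous bijection, hence a homeomorphism. Second, instead of cutting $O$ by hand inside the tube, the paper applies this compact case to the $H$-action on the slice $S$ (using $(S,i^*\cT_0)/H=(S,i^*\cT_1)/H$, which follows from $q_*\cT_0=q_*\cT_1$) to conclude $i^*\cT_0=i^*\cT_1$, and then transports this equality to the whole tube via the Abels--Biller identification $G\times_H(S,i^*\cT_k)\cong(GS,j^*\cT_k)$, valid for both $k=0,1$; this gives $j^*\cT_0=j^*\cT_1$, so $O\cap GS$ is the desired $\cT_0$-neighborhood of $x$ in $O$. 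In short, the properness/factorization trick and the balanced-product homeomorphism do the work that your unproven joint-continuity upgrade was meant to do; without a substitute for them, the argument has a genuine gap.
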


Recall $(X,\cT)$ induces the quotient topology $q_*(\cT) := \{ V \subseteq X/G ~|~ q\inv(V) \in \cT \}$.
A continuous function is \textbf{Bourbaki-proper}\footnote{If the source of the function is Hausdorff and the target is locally compact, this notion becomes: $(\ast)$ the preimage of any compact subset is compact \cite[Proposition~I:10.7]{Bourbaki}.} means that its product with the identity function on each topological space is a closed function \cite[D\'efinition~I:10.1]{Bourbaki}.
A $G$-action on $X$ is \textbf{Bourbaki} means that the following map is Bourbaki-proper:
\[
G \x X \longra X \x X ~;~ (g,x) \longmapsto (x,gx).
\]

\begin{proof}
First assume $G$ is compact\footnote{This part of the argument is a reformulation of the latter half of tomDieck's \cite[Proof~I:7.3]{tomDieck_TG}.} Hausdorff.
Then, since $(X,\cT_1)$ is Hausdorff, the orbit map $q: (X,\cT_1) \longra (X/G,q_*\cT_1)$ is Bourbaki-proper \cite[I:3.6ii]{tomDieck_TG}\footnote{The proof of Part(ii) implicitly assumes locally compact in its characterization $(\ast)$ of `proper'. A faster proof is by applying Part(i) to a product with any space equipped with a trivial $G$-action.}.
Since $q_*\cT_0=q_*\cT_1$, this orbit map is the composite of the continuous functions $\id_X: (X,\cT_1) \longra (X,\cT_0)$ and $q: (X,\cT_0) \longra (X/G,q_*\cT_0)$.
Then, since $(X,\cT_0)$ is Hausdorff, $\id_X: (X,\cT_1) \longra (X,\cT_0)$ is Bourbaki-proper \cite[Proposition~I:10.5d]{Bourbaki}.
Therefore it is a homeomorphism \cite[Proposition~I:10.2]{Bourbaki}.
That is, $\cT_0=\cT_1$.

Now assume $G$ is locally compact Hausdorff.
Fix $O \in \cT_1$.
Let $x \in O$.
Since $(X,\cT_1)$ is Tikhonov with Palais $G$-action, by the approximate-slice theorem (\ref{lem:approx_slice}), there exist both a compact subgroup $H$ of $G$ and an $H$-slice $S$ such that $x \in S \subset O$.
First note $\cT_1 \ni GS = q\inv(GS/G) \in \cT_0$ since $GS/G \in q_* \cT_1 \subseteq q_* \cT_0$.
Write $i: S \longra X$ and $j: GS \longra X$ for the inclusions.
Second, since $q_*\cT_0 = q_*\cT_1$, note
\[
(S,i^*\cT_0)/H ~=~ (G S,j^*\cT_0)/G ~=~ (G S,j^*\cT_1)/G ~=~ (S,i^*\cT_1)/H.
\]
Then, by the previous paragraph applied to the $H$-set $S$, we obtain $i^*\cT_0=i^*\cT_1$.
Since $\cT_0$ and $\cT_1$ are Hausdorff, $G$ is locally compact, and $H$ is closed in $G$, by \cite[Lemma~3.5]{Abels} which has a proof without any potentially implicit use of Tikhonov \cite[3.2a]{Biller_cm}, the $G$-tube $(G S,j^*\cT_k)$ is canonically homeomorphic to the balanced product $G \x_H (S,i^*\cT_k)$ for each $k=0,1$.
Then $j^*\cT_0 = j^*\cT_1$.
So $x \in O \cap GS \in j^*\cT_0$.
Since $GS \in \cT_0$, we further have $O \cap GS \in \cT_0$.
Thus $O \in \cT_0$.
Therefore $\cT_0 = \cT_1$.
\end{proof}

The statement of the following correspondence, for $G$ a compact Lie group, is originally due to Segal--Kosniowski \cite[p90]{Kosniowski}.
Our notation follows tomDieck's proof \cite[I:7.2, I:7.3]{tomDieck_TG} for $G$ a compact Hausdorff group, which we now generalize:
\begin{eqnarray*}
I_G(X,Y) &:=& \{ (x,y) \in X \x Y ~|~ G_x \leqslant G_y \}\\
M_G(X,Y) &:=& I_G(X,Y)/G;
\end{eqnarray*}
$G$ acts on $I_G(X,Y)$ by $g(x,y) := (gx,gy)$ and $\Map(X,Y)$ by $g(f)(x) := g\inv f(gx)$.

\begin{prop}\label{prop:tomDieck}
Let $G$ be a locally compact Hausdorff group.
Let $X$ and $Y$ be Tikhonov spaces with Palais $G$-actions.
Write $\Map(X,Y)^G$ for the set of continuous $G$-functions $X \to Y$, and $\Sec(\pi)$ for the set of continuous sections of the map
\[
\pi: M_G(X,Y) \longra X/G ~;~ G(x,y) \longmapsto Gx.
\]
Then the following well-defined function is a bijection:
\[
\Gamma: \Map(X,Y)^G \longra \Sec(\pi) ~;~ f \longmapsto (Gx \mapsto G(x,fx)).
\]
\end{prop}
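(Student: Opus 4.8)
The plan is to establish $\Gamma$ first as a bijection of underlying sets and then to verify the two continuity claims, the second of which is the real content. For well-definedness, observe that if $f$ is a $G$-map then $G_x \leqslant G_{fx}$ for every $x$ (any $g$ fixing $x$ fixes $fx = f(gx) = g\,fx$), so $(x,fx) \in I_G(X,Y)$; moreover $(gx,f(gx)) = g(x,fx)$, so $Gx \mapsto G(x,fx)$ is independent of the representative and is a section of $\pi$. Its continuity I would get for free: the map $X \longra I_G(X,Y)$, $x \mapsto (x,fx)$, is continuous and its composite with the orbit map $I_G(X,Y) \longra M_G(X,Y)$ is $G$-invariant, hence factors through the quotient $q\colon X \longra X/G$ to give the continuous section $\Gamma(f)$.

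Injectivity and surjectivity are then forced by the isotropy condition defining $I_G(X,Y)$. If $G(x,fx) = G(x,f'x)$, some $g$ fixes $x$ and carries $fx$ to $f'x$; but $g \in G_x \leqslant G_{fx}$ fixes $fx$, so $fx = f'x$. For surjectivity I would prove that for each $(x,m)$ with $\pi(m) = q(x)$ there is a \emph{unique} $y$ with $(x,y) \in I_G(X,Y)$ and $G(x,y) = m$: writing a representative of $m$ as $(gx,b)$ one sets $y := g\inv b$ for existence, and the same fixed-point argument gives uniqueness. Defining $f(x) := y$, the uniqueness makes $f$ a $G$-map and gives $\Gamma(f) = s$ at once. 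The sole remaining issue is continuity of this $f$.

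To handle it I would package the uniqueness above as a continuous $G$-equivariant bijection
\[
\eta\colon I_G(X,Y) \longra X \times_{X/G} M_G(X,Y) ~;~ (x,y) \longmapsto (x, G(x,y)),
\]
onto the fibre product $\{(x,m) \mid q(x) = \pi(m)\}$, with $G$ acting on the first coordinate only. Granting that $\eta$ is a homeomorphism, $f$ is continuous because $f = \mathrm{pr}_Y \circ \eta\inv \circ \sigma$, where $\sigma(x) := (x, s(qx))$ is visibly continuous into the fibre product. So everything reduces to upgrading the continuous bijection $\eta$ to a homeomorphism.

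This upgrade is the main obstacle, and here I would invoke the rigidity Lemma~\ref{lem:comparison}. On the $G$-set $I_G(X,Y)$ put $\cT_1$, the subspace topology from $X \times Y$, and $\cT_0$, the coarser topology pulled back along $\eta$ from the fibre product; then $\cT_0 \subseteq \cT_1$, and $\eta$ is a homeomorphism exactly when $\cT_0 = \cT_1$. To apply the lemma I must check three things. First, $(I_G(X,Y),\cT_1)$ is Tikhonov and Palais: it is an invariant subspace of $X \times Y$, whose diagonal action is Palais because the Palais property of $X$ alone supplies the required thin neighbourhoods. Second, $(I_G(X,Y),\cT_0)$ is Hausdorff: it is homeomorphic to a subspace of $X \times M_G(X,Y)$, and $M_G(X,Y)$ is Hausdorff since the Palais (hence Bourbaki-proper) action on $I_G(X,Y)$ has closed orbit relation. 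Third, $q_*(\cT_0) = q_*(\cT_1)$: both equal the standard quotient topology on $M_G(X,Y)$, because the projection $X \times_{X/G} M_G(X,Y) \longra M_G(X,Y)$ is the pullback of the open orbit map $q$ and is therefore itself a quotient map identifying the $G$-orbit space with $M_G(X,Y)$. Lemma~\ref{lem:comparison} then yields $\cT_0 = \cT_1$, completing the proof. The only genuinely delicate points are these Hausdorff and properness verifications for the auxiliary spaces, together with the recognition that the slice-theoretic rigidity of Lemma~\ref{lem:comparison} is precisely the locally compact replacement for tomDieck's closed-map argument in the compact case.
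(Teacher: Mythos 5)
Your proposal is correct and follows essentially the same route as the paper: both reduce the statement to showing that the canonical continuous $G$-bijection from $I_G(X,Y)$ onto the fibre product of $\pi$ and $q$ is a homeomorphism, and both obtain that upgrade from the rigidity Lemma~\ref{lem:comparison} after the same Palais/Tikhonov/Hausdorff verifications on $I_G(X,Y)$, $M_G(X,Y)$, and the pullback. The only cosmetic difference is that you carry out the set-level correspondence between sections and $G$-maps by hand where the paper delegates it to tomDieck's Proposition~I:7.2.
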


\begin{proof}
Consider the pullback $G$-space $Z := \lim(M_G(X,Y) \xrightarrow{~\pi~} X/G \xleftarrow{~q~} X)$.
By \cite[Proposition~I:7.2]{tomDieck_TG}, it suffices to show that this $G$-map is a $G$-homeomorphism:
\[
\phi: I_G(X,Y) \longra Z ~;~ (x,y) \longmapsto (G(x,y), x).
\]
Clearly it is surjective.
If $\phi(x,y)=\phi(x',y')$ then $x=x'$ and $G(x,y)=G(x,y')$, that is, there exists $g \in G$ such that $gx=x$ and $gy=y'$, so $g \in G_x \leqslant G_y$ hence $y=gy=y'$.
Thus $\phi$ is injective.
Therefore the continuous $G$-map $\phi$ is a bijection.

Since the $G$-actions on $X$ and $Y$ are Palais, the diagonal action on $X \x Y$ is Palais \cite[1.3.3]{Palais}.
So the $G$-subspace $I_G(X,Y)$ of $X \x Y$ is Palais \cite[1.3.1]{Palais}.
Therefore its quotient $M_G(X,Y)$ is Tikhonov \cite[1.2.8]{Palais} hence Hausdorff.
Since $X$ is Hausdorff, the subspace $Z$ of $M_G(X,Y) \x X$ is Hausdorff.
Also, since $X$ and $Y$ are Tikhonov, so is $X \x Y$ and hence so is the subspace $I_G(X,Y)$ \cite[33.2]{Munkres}.

Observe the projection $Z/G \longra M_G(X,Y)$ is a homeomorphism \cite[3.25(14)]{tomDieck_TG}.
Furthermore, the induced map $\phi/G: M_G(X,Y) \longra Z/G$ is the continuous inverse.
Write $\cT_1$ for the topology on $I_G(X,Y)$.
Write $\cT_0'$ for the topology on $Z$.
Write $\cT_0 := \phi^*(\cT_0')$ for the $\phi$-induced topology on $I_G(X,Y)$, which is coarser than $\cT_1$.
Therefore, by Lemma~\ref{lem:comparison}, the bijective $G$-map $\phi$ is a homeomorphism.
\end{proof}

Historically, Proposition~\ref{prop:tomDieck} is a useful conversion trick in transformation groups.
Lashof \cite{Lashof} used this trick so Jaworowski could improve from \cite{Jaworowski1} to \cite{Jaworowski2}.
The case of $X$ and $Y$ being the total spaces of principal $G$-bundles is \cite[II:2.6]{Bredon_TG}.

Consequently, we observe an improvement of James--Segal's nonequivariant criterion for $G$-ANEs beyond compact groups $G$ \cite[5.1]{JS2}.
It is a technical device aiding in Theorem~\ref{thm:JamesSegal}.
Denote by $\cC \downarrow B$ the \textbf{overcategory} whose objects are morphisms $\cC \ni C \longra B$ and morphisms $C' \longra C$ form commutative triangles.
We suggest that interested readers acquaint themselves with the further terminology of overspaces in \cite{JS2}, allowing for a parameterized version of ANE theory.

Below, $\cP$ denotes all paracompact Hausdorff spaces; $G\text{-}\cP$ denotes the members of $\cP$ equipped with a Palais $G$-action whose orbit space is also a member of $\cP$.
Also consider the subclass $\cP^*$ of hereditarily $\cP$ spaces (that is, each subspace is a member of $\cP$) and correspondingly $G\text{-}\cP^*$.
Recall that the class $T_{3.5}$ of Tikhonov spaces is hereditary and preserved under taking orbit spaces of Palais actions \cite[1.2.8]{Palais}.

\begin{lem}[James--Segal--Khan]\label{lem:JamesSegal1}
Let $G$ be a locally compact group.
Let $p: E \longra B$ be a $G$-map of $T_{3.5}$ spaces with Palais $G$-actions.
Let $\cC$ be one of $\cP^* \subset \cP \subset T_{3.5}$.
Then $E$ is an absolute (neighborhood) $G$-extensor over $B$ for the class $G\text{-}\cC \downarrow B$ if and only if: for all members $Z \in G\text{-}\cC$, the orbit space $M_G(Z,E)$ is an absolute (neighborhood) extensor over the orbit space $M_G(Z,B)$ for the class $\cC \downarrow M_G(Z,B)$.
\end{lem}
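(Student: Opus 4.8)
The plan is to derive both directions from the tom Dieck correspondence (Proposition~\ref{prop:tomDieck}), exploiting its naturality in the target variable, and then to reduce an arbitrary non-equivariant test datum to the special shape in which that correspondence is transparent; the ``absolute'' and ``absolute neighborhood'' cases run verbatim in parallel, taking the extending space to be the whole domain in the former. The first step is to record the naturality: for the $G$-map $p: E \longra B$ and any Palais $G$-space $X$, the bijections $\Gamma_E,\Gamma_B$ of Proposition~\ref{prop:tomDieck} form a commutative square whose vertical arrows are postcomposition with $p$ and with $M_G(X,p)$, which is immediate from the formula $\Gamma(f)(Gx)=G(x,fx)$. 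Hence, fixing a $G$-map $\alpha: X \longra B$ with section $s_\alpha:=\Gamma_B(\alpha)$, the $G$-maps $X \longra E$ lying over $\alpha$ are carried bijectively onto the lifts of $s_\alpha$ through $M_G(X,p)$, that is, onto the sections over $X/G$ of $M_G(X,E) \longra M_G(X,B)$ along $s_\alpha$. Since for a $G$-invariant subset $A_0=q\inv(W_0)$ one has $M_G(A_0,E)=\pi_E\inv(W_0)$ inside $M_G(X,E)$, this correspondence respects passage to closed $G$-subsets and to $G$-neighborhoods, and so converts an extension problem on one side into one on the other.

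For the implication $(\Leftarrow)$ I would take an equivariant test datum: an object $\alpha: A \longra B$ of $G\text{-}\cC \downarrow B$, a closed $G$-subset $A_0 \subseteq A$, and a $G$-map $f_0: A_0 \longra E$ over $\alpha$. Putting $Z:=A$, $W:=A/G$ (which lies in $\cC$ since $A \in G\text{-}\cC$), and $W_0:=A_0/G$, the section $s_\alpha: W \longra M_G(Z,B)$ serves as the test map, and $f_0$ corresponds to a map $W_0 \longra M_G(Z,E)$ over $s_\alpha|_{W_0}$. The hypothesis extends this over a neighborhood $\wt{U}$ of $W_0$ in $W$ (resp.\ over all of $W$); because $M_G(U,E)=\pi_E\inv(\wt{U})$ for $U:=q\inv(\wt{U})$, applying $\Gamma_E\inv$ to the resulting partial section produces a $G$-map $U \longra E$ over $B$ extending $f_0$, with $U$ a $G$-neighborhood of $A_0$.

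For the implication $(\Rightarrow)$ I would begin from a non-equivariant test datum relative to $Z \in G\text{-}\cC$: a map $\rho: W \longra M_G(Z,B)$ with $W \in \cC$, a closed $W_0 \subseteq W$, and a map $g_0: W_0 \longra M_G(Z,E)$ over $\rho|_{W_0}$. The key construction is the $G$-space
\[
A ~:=~ I_G(Z,B) \x_{M_G(Z,B)} W, \qquad A/G \iso W,
\]
with $G$ acting on the first factor, carrying the tautological $G$-map $\alpha: A \longra B$, $\,((z,b),w) \longmapsto b$. A short stabilizer computation then yields a natural isomorphism over $W$ between the pullbacks $\rho^* M_G(Z,-)$ and $s_\alpha^* M_G(A,-)$ of orbit-mapping spaces, under which $g_0$ corresponds to a $G$-map $f_0: A_0 \longra E$ over $\alpha$, where $A_0:=q_A\inv(W_0)$. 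Applying the equivariant hypothesis to $f_0$ and transporting the extension back across this isomorphism gives the sought extension of $g_0$ over $\wt{U}:=U/G$.

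The main obstacle is verifying that the constructed space $A$ lies in $G\text{-}\cC$. It is a closed $G$-subspace of $I_G(Z,B) \x W$ (the preimage of the diagonal of the Hausdorff space $M_G(Z,B)$), its action is Palais as a pullback of the Palais action on $I_G(Z,B)$, and its orbit space is $W \in \cC$; for $\cC=T_{3.5}$ this already suffices, as a subspace of a product of Tikhonov spaces is Tikhonov. The delicate point arises for $\cC=\cP,\cP^*$: paracompactness of the total space $A$, which I would deduce from paracompactness of $W=A/G$ together with properness, covering $A$ by tubes $G \x_H S$ furnished by the approximate-slice theorem (Lemma~\ref{lem:approx_slice}) and patching by an invariant partition of unity pulled back from $W$; for $\cP^*$ one notes in addition that every subspace of $A$ has orbit space a subspace of $W \in \cP^*$, giving hereditary paracompactness. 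The remaining checks---continuity of the two transport isomorphisms and of $\Gamma_E\inv$ on partial sections---reduce to the homeomorphism already contained in Proposition~\ref{prop:tomDieck} (through Lemma~\ref{lem:comparison}), applied to the $G$-invariant (neighborhood) subspaces at hand.
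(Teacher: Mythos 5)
Your route is the same one the paper takes: its proof of this lemma is essentially a citation, asserting that James--Segal's argument for their Proposition 5.1 goes through once each use of the tom Dieck bijection is replaced by Proposition~\ref{prop:tomDieck}, and what you have written out --- naturality of $\Gamma$ in the target variable, translation of equivariant extension data into partial sections of $M_G(Z,E) \longra M_G(Z,B)$ and back, and an auxiliary pullback $G$-space $A$ with $A/G \iso W$ over a non-equivariant test object --- is exactly that argument made explicit. The $(\Leftarrow)$ direction and the $\cC = T_{3.5}$ case of $(\Rightarrow)$ are correct as you present them.

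There is, however, a genuine gap at the step you yourself flag as the main obstacle, namely that $A := I_G(Z,B) \x_{M_G(Z,B)} W$ belongs to $G\text{-}\cC$ when $\cC = \cP^*$. Your justification --- ``every subspace of $A$ has orbit space a subspace of $W \in \cP^*$'' --- is a non sequitur: an arbitrary subspace of $A$ is not $G$-invariant, so it has no orbit space, and even for invariant subspaces paracompactness of the orbit space does not by itself give paracompactness of the total space (that transfer is exactly what your tube-and-partition argument exists to supply). To get $A \in \cP^*$ one must show every \emph{open} subspace of $A$ is paracompact, and here the perfect-map transfer you rely on breaks down: $S \longra S/H$ perfect carries paracompactness of $S/H$ to $S$, but it does not carry hereditary paracompactness, since the restriction of a perfect map to a non-saturated subspace is no longer perfect. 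A repair is possible (the fibres are compact metrizable orbits, so a tube-lemma/countable-base argument combined with Michael's locally-finite-closed-sum theorem shows the relevant tubes are hereditarily paracompact), but as written the $\cP^*$ case --- the one actually consumed downstream in Theorem~\ref{thm:JamesSegal} --- is not established. A smaller point: for $\cC = \cP$ your sketch should route through a locally finite \emph{closed} refinement (or the closed supports of the partition of unity) pulled back from $W$, rather than through the open sets $GS/G$ themselves, since open subsets of a paracompact space need not be paracompact; with that adjustment the $\cP$ case is fine.
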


\begin{proof}
The actions being Palais is automatically satisfied if $G$ is compact.
Our variant of \cite[Proposition~5.1]{JS2} further assumes $E,B \in T_{3.5}$ and not only $T_2$.
Modification of the proof of James--Segal ($\cC = \cP$) only requires the replacement of each direction's use of the bijection \cite[I:7.2, I:7.3]{tomDieck_TG} with our Proposition~\ref{prop:tomDieck}.
\end{proof}

\begin{rem}
Unfortunately a gap exists in \cite[Proof~5.2]{JS2} whereby the open sets covering ``$Z$'' (Case~I), locally closed subsets ``$Z_j - Z_{j-1}$'' (Case~II), and open sets ``$U$'' (Case~III) must belong to $\cP$ to apply \cite[3.1]{JS2}.
Hence we prepared the $G\text{-}\cP^*$ version.
This bootstrapping method \cite{Bredon_TG, Jaworowski2} recurs in Proposition~\ref{prop:FTC}.
We write ``extensor'' instead of ``retract,'' as the subquotient $M_G(Z,B)$ may not be a member of $\cP^*$ if $Z, B \in G\text{-}\cP^*$.
For example, the product of the Michael line $\M \in \cP^*$ and the irrationals $\R-\Q \in \cM$ is not a member of $T_4$, so not paracompact \cite{Michael2}.
This also leads to a technical misapplication of \cite[3.1]{JS2} in \cite[Proof~5.2]{JS2}, which we now correct with a minor generalization (compare with \cite[4.4]{Khan_classify}).
\end{rem}

\begin{lem}[James--Segal]\label{lem:JamesSegal2}
Let $p: E \longra B$ and $q: B \longra B_0$ be maps of topological spaces with $B_0 \in \cP$.
For each member $U$ of an open cover of $B_0$, suppose that $E|q\inv(U)$ is an absolute (neighborhood) extensor over $q\inv(U)$ for the class $\cP \downarrow q\inv(U)$.
Then $E$ is an absolute (neighborhood) extensor over $B$ for $\cP \downarrow B$.
\end{lem}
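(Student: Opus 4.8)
The plan is to verify the defining extension property directly, adapting the partition-of-unity argument of \cite[3.1]{JS2} while addressing the paracompactness subtlety that caused the misapplication noted above. Fix an object $\xi : C \longra B$ of $\cP \downarrow B$, so that $C \in \cP$; fix a closed subset $A$ of $C$ and a map $\sigma_0 : A \longra E$ over $B$, meaning $p \circ \sigma_0 = \xi|_A$. I must produce an extension $C \longra E$ over $B$ in the absolute case, and an extension $W \longra E$ over $B$ from some neighborhood $W$ of $A$ in $C$ in the neighborhood case.

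First I would transport the data along $g := q \circ \xi : C \longra B_0$. Because $B_0 \in \cP$ is paracompact Hausdorff, the given open cover $\{U\}$ of $B_0$ carries a subordinate locally finite partition of unity $\{\lambda_U\}$ with $\mathrm{supp}\,\lambda_U \subseteq U$; setting $\mu_U := \lambda_U \circ g$ yields a locally finite partition of unity on $C$ with $\mathrm{supp}\,\mu_U \subseteq V_U := g\inv(U)$ and $\xi(V_U) \subseteq q\inv(U)$. The point of routing through $B_0$, rather than through $B$ (which need not be paracompact), is that each cozero set $\{\mu_U > 0\} = g\inv\{\lambda_U > 0\}$ is an $F_\sigma$, hence paracompact, subset of $C$, so it is a member of $\cP$ equipped with a map into $q\inv(U)$ via $\xi$. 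Thus the hypothesis that $E\,|\,q\inv(U)$ is an absolute (neighborhood) extensor over $q\inv(U)$ for $\cP \downarrow q\inv(U)$ is genuinely available over each such set.

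Next I would patch the local extensions together by a transfinite induction along a well-ordering of the index set. At a successor stage one has a map already defined on $A$ together with the closed portion of $C$ covered so far, and one extends it across the next cozero set by invoking the local extensor property of $E$ over the corresponding $q\inv(U)$; local finiteness of $\{\mu_U\}$ makes the glued map continuous at every stage, including limit stages, since each point of $C$ has a neighborhood meeting only finitely many supports and so is affected by only finitely many extension steps. In the absolute case this delivers the required global map $C \longra E$ over $B$. In the neighborhood case each step extends only over a neighborhood of the relevant closed set, so I would shrink as I proceed, using local finiteness once more to amalgamate the successive neighborhoods into a single open $W \supseteq A$ carrying a well-defined continuous extension.

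I expect the gluing to be the main obstacle, for two linked reasons. First, $E$ carries no linear structure, so the partition of unity cannot be used to average the local extensions; they must instead be patched sequentially, which forces the bookkeeping that keeps each intermediate domain a closed or $F_\sigma$ (hence paracompact) subset of $C$, so that the local hypothesis actually applies. This is precisely the paracompactness gap that invalidated the naive argument. Second, in the neighborhood case one must ensure that the per-stage neighborhoods assemble into an honest neighborhood of $A$ on which the patched map is simultaneously defined and continuous. Both difficulties are resolved by the local finiteness coming from paracompactness of $B_0$; the detailed organization can be modeled on the classical union theorems for absolute neighborhood extensors \cite[III:\S6]{Hu} and on \cite[4.4]{Khan_classify}.
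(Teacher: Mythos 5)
Your proposal is correct and captures the one idea the paper's proof turns on: the partition of unity is taken on the paracompact space $B_0$ and pulled back along $q\circ\xi$, so paracompactness of $B$ itself is never invoked. The paper disposes of the lemma by observing that the original argument of \cite[Proof~3.1]{JS2} (stated there for $q=\id_B$) goes through verbatim with this substitution, citing Milnor's trick to replace the pulled-back locally finite cover by a \emph{countable} locally finite one and Dieudonn\'e's shrinking lemma to pass to closed supports, after which one runs an ordinary induction over $\N$. You instead run a transfinite induction over a well-ordering of the full index set; this works for the same reason the countable reduction does (local finiteness keeps unions of supports closed and makes continuity a local, hence finite, check at limit stages), and it buys you nothing except skipping Milnor's trick at the cost of heavier bookkeeping in the neighborhood case, where the per-stage neighborhoods must be amalgamated across uncountably many stages. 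One point to tighten: at a successor stage you should extend over the \emph{closed} support $\overline{\{\mu_U>0\}}$ (closed in $C$, hence in $\cP$, and still contained in $g\inv(U)$) rather than over the open cozero set, since gluing a map on an open piece to one on a closed piece need not be continuous on the union; with closed supports the union of two closed sets with agreeing restrictions is unproblematic, and this is exactly what the shrinking lemma is for in the cited argument.
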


\begin{proof}
The \cite[Proof~3.1]{JS2} for $q=\id_B$ works just as well in this setting.
The reference for Milnor's trick in the class $\cP$ for a \emph{countable} locally finite cover is \cite[p25--26]{Milnor_notes}; Dieudonn\'e's shrinking lemma in the class $T_4$ is \cite[Th\'eor\`eme~6]{Dieudonne}.
\end{proof}

We state another gluing lemma but drop a hypothesis unsatisfied in application.

\begin{lem}[James--Segal]\label{lem:JamesSegal3}
Let $p: E \longra B$ be a map of topological spaces.
Suppose that $E$ is an absolute neighborhood extensor over $B$ for the class $\cP \downarrow B$.
Let $C \subset B$ be closed.
If $E|C$ is an absolute extensor over $C$ and $E|B-C$ is so over $B-C$, then $E$ is moreover an absolute extensor over $B$ for the class $\cP \downarrow B$.
\end{lem}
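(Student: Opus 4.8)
The plan is to verify the defining extension property of an absolute extensor over $B$ by splitting any given lifting problem along $C$ and $B-C$, using the three hypotheses in turn. So I would fix an object $(Z \xrightarrow{q} B)$ of $\cP \downarrow B$, a closed subset $A \subseteq Z$, and a map $f\colon A \longra E$ with $p \circ f = q|_A$, and try to extend $f$ to all of $Z$ over $B$. The strategy is: first extend over the closed fibre $q\inv(C)$ using that $E|C$ is an absolute extensor over $C$; then thicken to an open neighborhood using the global $\cP\downarrow B$ ANE hypothesis; and finally fill in the rest over $B-C$.

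First I would observe that $q\inv(C)$, being closed in the paracompact Hausdorff space $Z$, is itself paracompact, so $(q\inv(C) \longra C)$ is an object of $\cP \downarrow C$. Restricting $f$ to the closed subset $A \cap q\inv(C)$ and applying the hypothesis that $E|C$ is an absolute extensor over $C$ produces an extension $g\colon q\inv(C) \longra E|C$ over $C$. Since $f$ and $g$ agree on $A \cap q\inv(C)$, the closed pasting lemma glues them to a map $f'$ on the closed set $A \cup q\inv(C)$, over $B$. Because $E$ is an absolute \emph{neighborhood} extensor over $B$, I can then extend $f'$ to a map $\bar f$ on some open neighborhood $W$ of $A \cup q\inv(C)$ in $Z$.

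The remaining region lies over $B-C$, and handling it is the main obstacle, since $B-C$ is merely open in $B$ and the preimage $q\inv(B-C)$ is open rather than closed, hence not obviously paracompact; so I cannot directly feed it to the absolute-extensor hypothesis over $B-C$. The resolution, which is precisely why the hypotheses on the two pieces are absolute and not just neighborhood extensors, is to invoke normality of $Z$ to interpose an open set $V$ with $A \cup q\inv(C) \subseteq V \subseteq \overline V \subseteq W$ and to work instead with the \emph{closed} set $Z - V$: it is paracompact and satisfies $q(Z-V) \subseteq B-C$, so $(Z-V \longra B-C)$ is a legitimate object of $\cP \downarrow (B-C)$. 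The map $\bar f$ is already defined on its closed subset $\overline V - V \subseteq W$, so the hypothesis that $E|(B-C)$ is an absolute extensor over $B-C$ extends it to $g'\colon Z-V \longra E|(B-C)$ over $B-C$. Finally $Z = \overline V \cup (Z - V)$ is a closed cover meeting in $\overline V - V$, where $\bar f$ and $g'$ coincide by construction, so the closed pasting lemma assembles the desired global extension; it restricts to $f$ on $A$ because $A \subseteq V \subseteq \overline V$. I expect the only delicate bookkeeping to be this passage from the open stratum $B-C$ to the genuinely paracompact closed set $Z-V$, which is exactly what lets the statement dispense with any paracompactness assumption on $B-C$ itself.
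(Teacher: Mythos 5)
Your argument is correct and is essentially the standard James--Segal proof that the paper cites (\cite[Proof~3.2]{JS2}): extend over $q\inv(C)$, thicken via the neighborhood-extensor hypothesis, shrink by normality of $Z$, and finish over the closed complement $Z-\ol{V}$ lying over $B-C$. Your observation that only normality of the test space $Z\in\cP$ is needed --- never paracompactness or normality of $B$ --- is precisely the point the paper makes in justifying the dropped hypothesis $B\in\cP$.
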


\begin{proof}
The \cite[Proof~3.2]{JS2} works just as well without the assumption $B \in \cP$.
The only point-set assumption used is that the member ``$Y$'' of $\cP$ belongs to $T_4$.
\end{proof}

We arrive at a corrigendum of \cite[Proposition~4.1]{JS2}, which is James--Segal's $G$-ANE to $G$-AE criterion.
We improve it beyond compact Lie $G$, assuming a bit more point-set topology on $E$ (Tikhonov) and less on $B$ (need not be paracompact).

\begin{thm}[James--Segal--Khan]\label{thm:JamesSegal}
Let $G$ be any Lie group.
Let $p: E \longra B$ be a $G$-map of $T_{3.5}$ spaces with Palais $G$-actions.
Suppose $E$ is an absolute neighborhood $G$-extensor over $B$ for the class $G\text{-}\cP^* \downarrow B$.
For each compact subgroup $H$ of $G$, assume the $H$-skeleton $E^H$ is an absolute extensor over the $H$-skeleton $B^H$ for the class $\cP \downarrow B^H$.
Then $E$ is moreover an absolute $G$-extensor over $B$ for $G\text{-}\cP^* \downarrow B$.
\end{thm}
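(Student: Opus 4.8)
The plan is to convert the equivariant assertion into a non-equivariant lifting problem and then to solve the latter by an orbit-type induction, splicing strata together with the two James--Segal gluing lemmas.

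First I would invoke Lemma~\ref{lem:JamesSegal1} with $\cC = \cP^*$. Since $E$ is assumed an absolute \emph{neighborhood} $G$-extensor over $B$ for $G\text{-}\cP^* \downarrow B$, that lemma already supplies, for every test space $Z \in G\text{-}\cP^*$, that $M_G(Z,E)$ is an absolute \emph{neighborhood} extensor over $M_G(Z,B)$ for $\cP^* \downarrow M_G(Z,B)$. By the same lemma, proving the theorem amounts to upgrading ``neighborhood extensor'' to ``extensor'': fixing $Z \in G\text{-}\cP^*$ and writing $p' : M_G(Z,E) \longra M_G(Z,B)$ together with $\pi : M_G(Z,B) \longra Z/G$, I must show $p'$ is an absolute extensor over $M_G(Z,B)$ for $\cP^* \downarrow M_G(Z,B)$. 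Here $Z/G \in \cP$, and since $G$ is Lie the approximate-slice theorem (\ref{lem:approx_slice}) furnishes honest slices (take $N = 1$): each orbit has a neighborhood $GS \cong G \x_H S$ with $H$ compact, on which every isotropy group is subconjugate to $H$.

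Second I would localize over the paracompact base $Z/G$ by Lemma~\ref{lem:JamesSegal2}. Covering $Z/G$ by the orbit neighborhoods $GS/G$ reduces the structure over each to the compact group $H$ acting on $S$, where $(H)$ is the unique maximal orbit type and its stratum is closed. Over such a piece I would induct on the poset of orbit types present, ordered by subconjugacy. This poset is well-founded, since descending chains of compact subgroups terminate by first decreasing dimension and then the finite number of components, and it is countable by the countability of conjugacy classes of compact subgroups (\ref{cor:ccc}); these two facts are exactly what make the induction and gluing legitimate. At each stage I peel off a maximal type $(H)$: its stratum is closed (limits of orbits of type $(H)$ have isotropy containing a conjugate of $H$, hence type $(H)$ by maximality, using Palais-properness for the semicontinuity of isotropy), and the fiber computation $\pi\inv(Gz) = B^{G_z}$ identifies $p'$ over the stratum as, locally over the slice, the trivial bundle with fiber the map $E^H \longra B^H$, which is an absolute extensor over $B^H$ for $\cP$ by hypothesis; reassembling across the stratum is another application of Lemma~\ref{lem:JamesSegal2}. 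The open complement carries strictly smaller top types and is an absolute extensor over its base by the inductive hypothesis, while $p'$ is an absolute neighborhood extensor over the whole by the first step, so Lemma~\ref{lem:JamesSegal3} combines the closed stratum and its complement into an absolute extensor over the piece. Iterating Lemma~\ref{lem:JamesSegal3} disposes of any finite sub-poset, and the Milnor-trick form of Lemma~\ref{lem:JamesSegal2}, valid for countable locally finite covers, absorbs the countably infinite extent of the poset and re-glues the slice-localizations into the conclusion over $M_G(Z,B)$.

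The hardest part will be the point-set bookkeeping rather than the homotopy-theoretic skeleton. Three issues demand care. One must verify that over each stratum the restriction of $M_G(Z,E) \longra M_G(Z,B)$ is genuinely, and locally trivially, the map $E^H \longra B^H$, so that the fixed-set hypothesis transfers; this rests on the slice identification $GS \cong G \x_H S$ and the fiber computation above. One must confirm that the saturated strata are closed for \emph{non-compact} proper $G$, which is where Palais-properness together with $G$ being Lie is indispensable. Finally, one must keep every gluing inside the hereditary class $\cP^*$: as the earlier remark warns via the Michael-line pathology, $M_G(Z,B)$ itself may leave $\cP^*$, so Lemmas~\ref{lem:JamesSegal2} and~\ref{lem:JamesSegal3} must be used in ``extensor'' rather than ``retract'' form and only against test objects $Y \in \cP^*$---for which their proofs, needing only $Y \in T_4$, apply unchanged---matching the $\cP^*$ neighborhood input produced by Lemma~\ref{lem:JamesSegal1}.
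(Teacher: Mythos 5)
Your skeleton is the same as the paper's: reduce via Lemma~\ref{lem:JamesSegal1} with $\cC=\cP^*$ to the nonequivariant claim that $M_G(Z,E)$ is an absolute extensor over $M_G(Z,B)$ for $Z\in G\text{-}\cP^*$, localize over $Z/G\in\cP$ by Palais slices and Lemma~\ref{lem:JamesSegal2}, and then run an orbit-type recursion in which the closed maximal stratum (where $p'$ restricts, locally trivially, to $E^H\longra B^H$) is glued to its open complement by Lemma~\ref{lem:JamesSegal3}. This is precisely the corrected James--Segal argument (their Cases I--IV) that the paper's proof describes by reference, including the substitution of Lemmas~\ref{lem:JamesSegal2} and~\ref{lem:JamesSegal3} for \cite[3.1, 3.2]{JS2} and the use of hereditary paracompactness of $Z/G$ so that the localized pieces stay in $\cP$.

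One step as written would not go through: you propose to ``absorb the countably infinite extent of the poset'' of orbit types with the Milnor-trick form of Lemma~\ref{lem:JamesSegal2}. That lemma applies only to \emph{open} covers of a paracompact base, whereas the orbit-type strata are merely locally closed, and the filtration obtained by successively deleting closed maximal strata is an increasing union of opens, not a locally finite cover; neither Corollary~\ref{cor:ccc} nor countability enters. The correct closing move is the one you already set up but did not invoke: the subconjugacy order on compact subgroups of a compact Lie group satisfies the descending chain condition (dimension, then number of components), so the recursion on the isotropy group is a well-founded (Noetherian) induction. At the stage indexed by $H$, the open complement of the closed $(H)$-stratum in the slice neighborhood is covered by \emph{open} slice neighborhoods with strictly smaller slice groups, each an absolute extensor over its base by the inductive hypothesis; Lemma~\ref{lem:JamesSegal2} glues these (legitimately, since they are open), and Lemma~\ref{lem:JamesSegal3} then combines the closed stratum with this open complement. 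With that replacement your argument matches the paper's Case~IV, where the descending recursion terminates by DCC rather than by exhausting a countable list.
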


\begin{proof}
In \cite[4.1]{JS2}, by Lemma~\ref{lem:JamesSegal1} for $\cC = \cP^*$, James--Segal would reduce to a corrected \cite[Proposition~5.2]{JS2}: $M_G(Z,E) \in \text{AE}\left(\cP^* \downarrow M_G(Z,B)\right)$ if $Z \in G\text{-}\cP^*$.

Their reductive proof of this nonequivariant assertion works, with the following adjustments.
In Case~I, as $\cP \subset T_{3.5}$, instead use Palais' slice theorem \cite[2.3.1]{Palais}, and then Lemma~\ref{lem:JamesSegal2} for \cite[3.1]{JS2} with $q$ the projection $M_G(Z,B) \longra Z/G \in \cP$.
In Case~II, apply Lemma~\ref{lem:JamesSegal3} for \cite[3.2]{JS2}, as $(Z_j-Z_{j-1})/G \in \cP$ and $M_G(Z_j,E) \in \text{ANE}\left(\cP\downarrow M_G(Z_j,B)\right)$ by Lemma~\ref{lem:JamesSegal1}.
In Case~III, apply Lemma~\ref{lem:JamesSegal2} for \cite[3.1]{JS2}, as each open subset of $Z/G \in \cP^*$ belongs to $\cP$.
Finally, in their clever Case~IV, as in Case~I use Palais' slice theorem \cite[2.3.1]{Palais}, and that the action is Cartan so each isotropy group ``$G_z$'' is a compact Lie group. In the descending recursion, the appeal to the nonexisting \cite[2.5]{JS2} should instead be to Lemma~\ref{lem:JamesSegal3}.
\end{proof}

\begin{cor}[James--Segal--Khan]\label{cor:JamesSegal}
Let $G$ be any Lie group.
Let $f: A \longra B$ be a $G$-map between members of $G\text{-}\cP^*$ that are absolute neighborhood $G$-extensors for $G\text{-}\cP^*$.
For each compact subgroup $H$ of $G$, assume the induced map $f^H: A^H \longra B^H$ of $H$-skeleta is a homotopy equivalence.
Then $f$ is a $G$-homotopy equivalence.
\end{cor}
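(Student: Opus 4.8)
The plan is to convert the $G$-homotopy-equivalence assertion into an absolute-extensor assertion to which Theorem~\ref{thm:JamesSegal} applies, by replacing $f$ with a Hurewicz $G$-fibration over $B$. Form the $G$-mapping cocylinder $N_f := \{(a,\gamma) \in A \x \Map(I,B) ~|~ \gamma(0) = f(a)\}$, equipped with the diagonal $G$-action $g(a,\gamma) = (ga, t \mapsto g\gamma(t))$ and the $G$-map $\hat p : N_f \longra B$, $(a,\gamma) \mapsto \gamma(1)$. The assignment $a \mapsto (a, \mathrm{const}_{f(a)})$ exhibits $A$ as a strong $G$-deformation retract of $N_f$ (reparametrize $\gamma$ toward $\gamma(0)$, $G$-equivariantly), and under it $\hat p$ corresponds to $f$; hence $f$ is a $G$-homotopy equivalence if and only if $\hat p$ is. Since $\hat p$ is a $G$-Hurewicz fibration, it then suffices to prove $N_f \in \mathrm{AE}_G(G\text{-}\cP^* \downarrow B)$, and the plan is to obtain this from Theorem~\ref{thm:JamesSegal}.

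First I would record why the extensor conclusion suffices. Given $N_f \in \mathrm{AE}_G(G\text{-}\cP^* \downarrow B)$, applying the $G$-extension property to the object $\id_B \in G\text{-}\cP^* \downarrow B$ (with empty closed subset) produces a $G$-section $s$ of $\hat p$, and applying it to $N_f \x I \longra B$ over $B$, with the two ends defined by $\id_{N_f}$ and $s\hat p$, produces a vertical $G$-homotopy $\id_{N_f} \simeq_B s\hat p$. Thus $\hat p$ is a $G$-homotopy equivalence, and so is $f$. This is the easy direction and needs no fibration theory.

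Next I would verify the two hypotheses of Theorem~\ref{thm:JamesSegal} for $p = \hat p$, with $E = N_f$ and the given base $B$. The absolute-neighborhood hypothesis, that $N_f$ is a neighborhood $G$-extensor over $B$ for $G\text{-}\cP^* \downarrow B$, should follow from $N_f$ and $B$ being $G$-ANEs together with $\hat p$ being a $G$-fibration (the fibered neighborhood-extensor criterion of Lemma~\ref{lem:JamesSegal1}). For the fixed-set hypothesis I would use that passing to $H$-fixed points commutes with the cocylinder: one checks directly that $(N_f)^H = N_{f^H}$ and that $\hat p^H$ is the fibration of the map $f^H : A^H \longra B^H$. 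By Lemma~\ref{lem:Palais} the skeleta $A^H$ and $B^H$, and hence $N_{f^H}$, lie in $\mathrm{ANR}(\cM)$; by assumption $f^H$, and therefore $\hat p^H$, is a homotopy equivalence. The standard fact from Dold's theory of fibrations---that a Hurewicz fibration between ANRs over a paracompact base is an absolute extensor over that base if (and only if) it is a homotopy equivalence---then yields $(N_f)^H \in \mathrm{AE}(\cP \downarrow B^H)$. With both hypotheses established, Theorem~\ref{thm:JamesSegal} gives $N_f \in \mathrm{AE}_G(G\text{-}\cP^* \downarrow B)$, finishing the proof.

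The hard part will be the point-set bookkeeping that legitimizes working with the cocylinder inside the class $G\text{-}\cP^*$. One must confirm that $N_f$, a closed $G$-subspace of $A \x \Map(I,B)$, is again a hereditarily paracompact Palais $G$-space with paracompact orbit space, that it is a $G$-ANE, and that $\hat p$ is a genuine $G$-Hurewicz fibration with $(N_f)^H = N_{f^H}$. The delicate point is the paracompactness of the mapping space $\Map(I,B)$ when $B$ is merely paracompact rather than metrizable, and it is precisely here that the hereditary class $\cP^*$, rather than $\cP$, is forced upon us (as in the remark following Lemma~\ref{lem:JamesSegal1}); dually one could instead run the argument through the $G$-mapping cylinder, trading the function-space difficulty for a target-constraint that is not fibered over $B$. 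The remaining ingredient, Dold's shrinkability theorem, is classical.
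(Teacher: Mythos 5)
Your proposal is correct and follows essentially the same route as the paper, which simply defers to James--Segal's original argument \cite[Proof~4.2]{JS2}: replace $f$ by the mapping-path space $W$ (your $N_f$), apply Theorem~\ref{thm:JamesSegal} in place of \cite[4.1]{JS2} to make $W \longra B$ an absolute $G$-extensor over $B$, and extract a $G$-section giving the homotopy inverse. The point-set caveat you flag at the end is exactly the one the paper records, namely that the hypothesis $A,B \in G\text{-}\cP^*$ is what legitimizes taking $G$-sections to the mapping-path space.
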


\begin{proof}
The same as \cite[Proof~4.2]{JS2}, replacing \cite[4.1]{JS2} with Theorem~\ref{thm:JamesSegal}.
The assumption $A,B \in G\text{-}\cP^*$ is used for $G$-sections to mapping-path spaces ``$W$.''
\end{proof}

Examples in $\cP^*$ are the Michael line $\M$, metrizable spaces, and CW complexes.

%------------------------------------------------------------------------------
\section{Local finiteness of orbit types, I: transtoral induction}

The arguments of G\,D~Mostow inspired this induction scheme \cite[VII:2.1]{Borel_seminar}.

\begin{lem}[Bredon]\label{lem:Bredon}
Let $K$ be a compact Lie group (maybe disconnected).
Let $\cS_0$ be a conjugation-invariant set of closed subgroups of $K$.
Let $T$ be a maximal torus.
If $\{S \cap T ~|~ S \in \cS_0\}$ is finite, then $\cS_0$ has only finitely many conjugacy classes.
\end{lem}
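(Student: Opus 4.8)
The plan is to argue by \emph{transtoral induction}, the Mostow scheme underlying \cite[VII]{Borel_seminar}: I would induct on $\dim K$, and for fixed dimension secondarily on the number of connected components of $K$. The base case $\dim K = 0$ is immediate, since then $K$ is finite, $T = \{1\}$, and $\cS_0$ consists of subgroups of a finite group, of which there are only finitely many.

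For the inductive step I would first pin down which maximal tori occur. Given $S \in \cS_0$, its maximal torus $T_S$ is a torus of $K$, hence conjugate into $T$; since $\cS_0$ is conjugation-invariant and we are counting conjugacy classes, I may replace $S$ by a conjugate so that $T_S \subseteq T$. Then $T_S$ is a connected subgroup of $S \cap T$, and as it is maximal among tori of $S$ while $(S \cap T)^0$ is itself a torus of $S$ containing it, we get $T_S = (S \cap T)^0$. The hypothesis that $\{S \cap T \mid S \in \cS_0\}$ is finite therefore leaves only finitely many possibilities for the torus $\tau := T_S$, and it suffices to bound the conjugacy classes of those $S$ with a fixed maximal torus $\tau$.

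Next I would separate the continuous data from the finite data. The identity component $L := S^0$ is a connected closed subgroup with maximal torus $\tau$, and such $L$ are finite in number up to conjugacy: a connected subgroup is recovered from its Lie algebra, that Lie algebra is $\mathrm{Lie}(\tau)$ together with a selection of $\tau$-weight spaces cut out by a closed root subsystem, there are only finitely many weights of $\tau$ acting on $\mathrm{Lie}(K)$, and the residual ambiguity within a fixed weight space is absorbed by conjugation in $N_K(\tau)$. Having fixed $L$, every $S$ with $S^0 = L$ satisfies $L \trianglelefteq S \leqslant N_K(L)$ with $S/L$ finite, so I would push the problem into $\ol K := N_K(L)/L$: its maximal torus is the image of one of $N_K(L)$, the relevant intersections with that torus are subquotients (up to the Weyl group) of the finite family $\{S \cap T\}$ and hence again finite, and when $\dim L > 0$ we have $\dim \ol K = \dim N_K(L) - \dim L < \dim K$, so the inductive hypothesis yields finitely many $S/L$, hence finitely many $S$.

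The main obstacle is the case of \emph{finite} subgroups, where $\tau = \{1\}$ and $L = S^0$ is trivial, so $\ol K = K$ and the dimension fails to drop. Here I would fall back on the secondary induction on the component count together with the structure of $\pi_0(K)$: reduce to $K$ connected, analyze $S$ through $S \cap K^0$ and the injection $S/(S \cap K^0) \hookrightarrow \pi_0(K)$, and exploit that every element of a connected compact Lie group lies in a conjugate of $T$, so that the finitely many values of the intersections $S \cap gTg\inv$ (all controlled, up to $N_K(T)/T$, by the finite family $\{S \cap T\}$) bound the orders of elements of $S$; the Montgomery--Zippin absence of small subgroups then prevents any continuous family of finite subgroups from intervening. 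Making this finite-subgroup bookkeeping mesh cleanly with the normalizer-quotient induction above is where the real care is required; the remainder is the formal transtoral machinery.
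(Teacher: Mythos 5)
The paper does not actually prove this lemma: it is quoted verbatim as Bredon's result, with the sentence preceding it pointing to \cite[VII:2.1]{Borel_seminar}, so there is no in-paper argument to measure yours against. Your plan is the right one in outline --- it is exactly the Mostow-style transtoral/normalizer induction that underlies the cited source --- but as written it has concrete gaps at the three places where the real content lives, and you flag the worst of them yourself.

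First, the claim that there are only finitely many conjugacy classes of connected closed $L \leqslant K$ with prescribed maximal torus $\tau$ is true but your justification does not establish it: when $\tau$ is not maximal in $K$, the $\tau$-weight spaces of $\mathrm{Lie}(K)$ have multiplicity, so $\mathrm{Lie}(L)$ is not ``cut out by a closed root subsystem'' --- each root space of $L$ is a line chosen inside a higher-dimensional weight space, a genuinely continuous parameter. That $Z_K(\tau)$-conjugation absorbs this parameter is a rigidity statement (Montgomery--Zippin neighboring subgroups, or Richardson-type rigidity of reductive subalgebras) and is the heart of that step, not ``residual ambiguity.'' Second, in the descent to $\ol K = N_K(L)/L$ you assert that the intersections of the $S/L$ with a maximal torus of $\ol K$ again form a finite set; this is not a formal consequence of the finiteness of $\{S\cap T\}$, since a maximal torus of $N_K(L)$ containing $\tau$ need not sit inside $T$ once $L$ has been fixed, and $(S/L)\cap \ol T$ is governed by $S\cap T_N L$ rather than by $S\cap T$. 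Third, and most seriously, the finite-subgroup case: the torus hypothesis does bound the \emph{exponent} of the finite $S\in\cS_0$ (every cyclic subgroup is conjugate into $T$), but bounded exponent does not bound the order --- for that you need Jordan's theorem (an abelian normal subgroup of index bounded by a constant of $K$) together with a bound on the rank of abelian subgroups, after which the neighboring-subgroups theorem (not merely the absence of small subgroups) rules out infinitely many conjugacy classes. Since you explicitly leave this step as ``where the real care is required,'' the proposal is a correct strategy but not a complete proof.
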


\begin{rem}\label{rem:MaximalTorus}
Any compact Lie group $K$ has a maximal torus unique up to conjugacy \cite[IX:2.2]{Bourbaki2}.
Let $G$ be a Lie group.
The Cartan--Malcev--Iwasawa theorem \cite[6]{Iwasawa} states that there is a conjugacy-unique maximal compact subgroup $K$ of the identity component $G^e$ of $G$.
Then $G$ has a conjugacy-unique maximal torus~$T$.
\end{rem}

\begin{defn}\label{defn:compactlysupported}
Let $\cS$ be a collection of subsets of a topological group $G$.
Herein, we say that $\cS$ is \textbf{compactly supported} if it is closed under conjugation in $G$ and there exists a compact set $C$ in $G$ such that each subset $S \in \cS$ is conjugate into $C$.
\end{defn}

\begin{rem}\label{rem:Mostow}
Let $G$ be a \emph{virtually connected} Lie group, that is, with only finitely many components.
Let $\cS$ be a conjugation-invariant set of compact subgroups of $G$.
Mostow extends the theorem of Remark~\ref{rem:MaximalTorus}: there is a maximal compact subgroup $K$ of such $G$ unique up to conjugacy \cite[3.1]{Mostow_selfadjoint}.
Then $\cS$ is compactly supported; this fails for the group $G = \bigoplus_{n \in \N} \Z/n$, which has no maximal compact subgroup.
\end{rem}

Our notion of $(S)$-maximal below is stronger than maximal in $(\cpt_S(G),\leqslant)$.

\begin{defn}\label{defn:partialorder}
Let $G$ be an arbitrary Lie group.
For any compact subgroups $H$ and $K$ of $G$, write $H \preccurlyeq K$ to mean that a $G$-conjugate of $H$ is a subgroup~of~$K$.
Since closed subgroups of $G$ are Lie (Cartan's theorem \cite[27]{CartanE}) and compact Lie groups are cohopfian (injective endomorphisms are bijective), $\preccurlyeq$ satisfies antisymmetry.
Hence $\preccurlyeq$ is a \emph{partial order} on the set $\cpt(G)$ of compact subgroups~of~$G$.

Let $S$ be a subset of $G$.
Write $\cpt_S(G)$ for the subset of $\cpt(G)$ consisting of compact subgroups of $G$ contained in $S$; it may be empty.
By $H \in \cpt_S(G)$ is \textbf{$(S)$-maximal} we mean: if $H \preccurlyeq K \in \cpt_S(G)$ then $K$ is a $G$-conjugate of $H$.
\end{defn}

For Lie groups with any discrete $\pi_0$, here is a downgrade of Mostow's theorem.

\begin{prop}\label{prop:Mostow}
Let $G$ be an arbitrary Lie group.
Let $C$ be a compact subset of $G$.
Any compact subgroup of $G$ in $C$ conjugates into a $(C)$-maximal compact subgroup.
Moreover, the set of conjugacy classes of $(C)$-maximal compact subgroups is finite\footnote{The shape of the compact subset effects uniqueness, for example $G = U_1 \x U_1$ and $C=U_1 \vee U_1$.}.
\end{prop}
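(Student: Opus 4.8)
The plan is to reduce to the \emph{virtually connected} case, where Mostow's theorem (Remark~\ref{rem:Mostow}) and Bredon's Lemma~\ref{lem:Bredon} apply, and then to control the passage between the non-invariant hypothesis ``$\subseteq C$'' and the conjugation-invariant machinery by a Chabauty-limit argument powered by the neighboring-subgroups corollary (Corollary~\ref{cor:MontgomeryZippin}). First I would set up the component reduction. Write $\mathrm{pr}: G \longra \pi_0(G)$ for the projection onto the discrete group of components. Since $C$ is compact and $\pi_0(G)$ is discrete, the image $\ol{C} := \mathrm{pr}(C)$ is finite; hence only finitely many subgroups $\Pi_1,\dots,\Pi_m$ of $\pi_0(G)$ are contained in $\ol{C}$. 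Each preimage $G_i := \mathrm{pr}\inv(\Pi_i)$ is an open, hence closed, virtually connected subgroup, and every $H \in \cpt_C(G)$ satisfies $\mathrm{pr}(H) = \Pi_i$ for exactly one $i$, so $H \leqslant G_i$. By Remark~\ref{rem:Mostow} each $G_i$ has a conjugacy-unique maximal compact subgroup $K_i$, into which every compact subgroup of $G_i$ is $G_i$-conjugate. The tension to keep in mind throughout is that ``$\subseteq C$'' is \emph{not} conjugation-invariant, whereas $\preccurlyeq$, $(C)$-maximality, the $K_i$, and Bredon's lemma all are.

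For the existence statement I would argue by a monotone invariant: to a compact subgroup $H$ assign $\iota(H) := (\dim H, \card \pi_0(H)) \in \N \x \N$, ordered lexicographically. As in the antisymmetry discussion of Definition~\ref{defn:partialorder} (closed subgroups are Lie by Cartan, and compact Lie groups are cohopfian), if $H \preccurlyeq K$ with $H$ not $G$-conjugate to $K$ then $\iota(H) < \iota(K)$: a conjugate of $H$ sits properly inside $K$, and equality of dimensions would force it to contain $K^e$, hence be a proper finite-index subgroup, strictly dropping the component count. Given $H \in \cpt_C(G)$, I would first maximize the (bounded) dimension over $\{K \in \cpt_C(G) : H \preccurlyeq K\}$, reaching a top value $d^*$, and then maximize the component count among such $K$ of dimension $d^*$. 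The only delicate point is that this last supremum is finite, which I would extract from the Chabauty-compactness of $\cpt_C(G)$ (a Chabauty limit of subgroups inside the compact set $C$ is again a compact subgroup of $C$) together with Corollary~\ref{cor:MontgomeryZippin}: a sequence of dimension $d^*$ with unbounded component count clusters onto some compact subgroup $K_\infty$, and the neighboring-subgroups corollary forces either a boundedly-componented $K_\infty$ of the same dimension (so the counts were bounded after all) or $\dim K_\infty > d^*$, which contradicts maximality of $d^*$ once $H \preccurlyeq K_\infty$ is verified. Any $K$ realizing the maximal $\iota$-value is then $(C)$-maximal.

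For the finiteness statement I would pass to each $G_i$ and conjugate the $(C)$-maximal subgroups into $K_i$ via Mostow. Fixing a maximal torus $T_i \leqslant K_i$ (Remark~\ref{rem:MaximalTorus}), set $\cS_0 := \{ S \leqslant K_i : S \text{ is } G\text{-conjugate to some } (C)\text{-maximal compact subgroup}\}$, a conjugation-invariant family of closed subgroups of $K_i$. By Bredon's Lemma~\ref{lem:Bredon} it then suffices to prove that $\{ S \cap T_i : S \in \cS_0 \}$ is finite, after which summing the resulting finitely many classes over the finitely many indices $i$ gives the claim. I would again establish this torus-level finiteness by Chabauty-compactness: were infinitely many intersections distinct, a subsequence would cluster, and Corollary~\ref{cor:MontgomeryZippin} would produce a compact subgroup strictly dominating (a conjugate of) the relevant $(C)$-maximal subgroup, contradicting its maximality once that dominating group is seen to conjugate back inside $C$. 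Here maximality is exactly what excludes the otherwise-fatal infinite family of finite subgroups of $T_i$, all of which are dominated by $T_i$ and hence never $(C)$-maximal.

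The hard part, in both halves, is precisely the non-conjugation-invariance of ``$\subseteq C$'': one must transfer a relation such as $H \preccurlyeq K_n$, or the $(C)$-maximality of a subgroup, across a Chabauty limit $K_n \to K_\infty$, equivalently control the conjugating elements $\gamma_n$ with $\gamma_n H \gamma_n\inv \subseteq K_n \subseteq C$. I expect the decisive input to be Corollary~\ref{cor:MontgomeryZippin}: applied at the cluster point, it absorbs all nearby compact subgroups, up to small conjugacy, into a single compact subgroup, which both caps the component count at fixed dimension and lets a limiting subgroup be recognized as dominating $H$. Finally, I would stress that ``finitely many'' cannot be upgraded to ``unique up to conjugacy,'' since the shape of $C$ genuinely matters, as in the footnoted $G = U_1 \x U_1$ with $C = U_1 \vee U_1$, whose two axis circles are inequivalent $(C)$-maximal subgroups.
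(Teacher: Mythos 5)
Your existence argument is essentially sound and runs on the same two engines as the paper's proof: compactness of $\cpt_C(G)$ in the Pompeiu--Hausdorff metric on $\Cpt(C)$ (a limit of compact subgroups inside $C$ is again a compact subgroup inside $C$), plus the Montgomery--Zippin neighboring-subgroups theorem. You merely replace the paper's Zorn's-lemma-on-chains organization by maximizing the well-founded invariant $(\dim, \card\,\pi_0)$, which is a legitimate alternative. One caveat: you must use the Lie-group refinement in which the dominating subgroup is $K_\infty$ itself (the remark following Corollary~\ref{cor:MontgomeryZippin}, or \cite{MZ_neighbor} directly, as the paper does); the locally compact version produces an $H'\geqslant K_\infty$ that may escape $C$, and your bound on $\card\,\pi_0$ at fixed dimension $d^*$ needs the dominating subgroup to lie in $\cpt_C(G)$ so that maximality of $d^*$ can be invoked.

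The finiteness half has a genuine gap. Your route --- conjugate the $(C)$-maximal subgroups into the Mostow maximal compact $K_i$, form the conjugation-invariant family $\cS_0$ inside $K_i$, and feed it to Lemma~\ref{lem:Bredon} --- obliges you to prove that $\{S\cap T_i : S\in\cS_0\}$ is finite, and your Chabauty argument for that does not close up, for two reasons. First, conjugating into $K_i$ is exactly where the containment in $C$ is destroyed: the cluster point of a sequence in $\cS_0$ (or the subgroup dominating it via neighboring subgroups) is a compact subgroup of $K_i$ with no reason to be conjugate into $C$, and the conjugators returning each $S_n$ to $C$ need not remain bounded; so $(C)$-maximality, which only bites against elements of $\cpt_C(G)$, yields no contradiction. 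Second, even granting such a scheme, pairwise distinctness of the intersections $S_n\cap T_i$ does not entail pairwise non-conjugacy of the $S_n$ (one conjugacy class can meet $T_i$ in many different subgroups), so concluding that the $S_n$ eventually lie in one class would not contradict the assumed infinitude of $\{S\cap T_i\}$. The paper avoids Lemma~\ref{lem:Bredon} entirely here: it takes an accumulation point $K$ of infinitely many pairwise non-conjugate $(C)$-maximal subgroups inside the compact metric space $\cpt_C(G)$ itself, so $K\in\cpt_C(G)$ automatically; neighboring subgroups then yields two distinct maximal ones $K',K''\preccurlyeq K$, and $(C)$-maximality forces $(K')=(K)=(K'')$, a contradiction. (Note also that Corollary~\ref{cor:transtoral} is deduced \emph{from} this proposition together with Lemma~\ref{lem:Bredon}, so routing the proof through Lemma~\ref{lem:Bredon} inverts the paper's logical order, though not circularly.)
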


\begin{proof}
Recall that the set $\Cpt(X)$ of nonempty compact subsets of a metric space $(X,d)$ is topologized by the Pompeiu--Hausdorff metric \cite[\S 21]{Pompeiu} \cite[p293]{Hausdorff1}:
\[
d_{PH}(A,B) ~:=~ \max\left\{ \sup_{a \in A} d(a,B), \sup_{b \in B} d(b,A) \right\} \quad\text{where}\quad d(x,S) ~:=~ \inf_{y \in S} d(x,y).
\]
Since $C$ is compact, $\Cpt(C)$ is compact \cite[Satz~28:VI]{Hausdorff2}.
Since multiplication and inversion in $G$ are continuous, it follows that any Cauchy sequence in $\cpt_C(G)$ converges in $\cpt_C(G)$, as a limit already exists in $(\Cpt(C),d_{PH})$.
Then $\cpt_C(G)$ is closed in $\Cpt(C)$ so compact\footnote{This step is a Lie-group analogue of Blaschke's selection theorem for convex sets \cite[18.I]{Blaschke}.} with respect to the metric $d_{PH}$.

Fix $K_0 \in \cpt_C(G)$.
Let $\{K_\alpha\}_{\alpha\in\cL}$ be a nonempty chain with each $K_\alpha \succcurlyeq K_0$ in the partially ordered set $(\cpt_C(G), \preccurlyeq)$ of Definition~\ref{defn:partialorder}.
(Here, a \emph{chain} is a subset $\cL$ of $\cpt_C(G)$ whose partial order restricts to a linear order on $\cL$.)
Since $\cpt_C(G)$ is sequentially compact and every chain is a (Moore--Smith) net, by a theorem of Kelley \cite[24]{Kelley_nets}, the chain $\{K_\alpha\}_{\alpha \in \cL}$ has a \emph{convergent subnet}.
That is, there exist $K \in \cpt_C(G)$ with $K \succcurlyeq K_0$ and cofinal $\cL' \subset \cL$ satisfying: for each $\eps>0$, there exists $\beta \in \cL'$ such that $d_{PH}(K_\alpha,K) < \eps$ for all $\beta \preccurlyeq \alpha \in \cL'$.
Since $K$ is compact and $G$ is Lie, by Montgomery--Zippin's neighboring-subgroups theorem \cite{MZ_neighbor}, there exists $\eps>0$ satisfying: if $K' \in \cpt(G)$ and $d_{PH}(K',K) < \eps$ then $K'$ is conjugate into $K$.
Then there exists a confinal subchain $\cL' \subset \cL$ and $\beta \in \cL'$ such that $K_\alpha \preccurlyeq K$ for all $\beta \preccurlyeq \alpha \in \cL'$.
Since $\cL'$ is cofinal in the linearly ordered $\cL$, it follows that $K_\alpha \preccurlyeq K$ for all $\alpha \in \cL$.
Therefore, by Zorn's lemma \cite{Zorn}, any element $K_0$ of $\cpt_C(G)$ conjugates into a $(C)$-maximal element.

Assume the set $\cM$ of $(C)$-maximal compact subgroups of $G$ has infinitely many conjugacy classes.
By the axiom of choice, there is an infinite subset $\cM_0$ of $\cM$ with pairwise distinct conjugacy classes.
Since $\cpt_C(G)$ is compact metric, $\cM_0$ has an accumulation point $K$.
Again, by the neighboring-subgroups theorem \cite{MZ_neighbor}, there exists $\eps>0$ satisfying: if $K' \in \cpt(G)$ and $d_{PH}(K',K)<\eps$ then $K' \preccurlyeq K$.
There are distinct $K', K'' \in \cM_0$ that are $\eps$-close to $K$.
Then $K' \preccurlyeq K$ and $K'' \preccurlyeq K$.
By $(C)$-maximality, note $(K')=(K)=(K'')$, which contradicts the distinctness of conjugacy classes in $\cM_0$.
Therefore $\cM$ has only finitely many conjugacy classes.
\end{proof}

We generalize the induction scheme of Lemma~\ref{lem:Bredon}, to be useful for our purposes.

\begin{cor}\label{cor:transtoral}
Let $G$ be a Lie group (maybe disconnected).
Let $\cS$ be a compactly supported set of closed subgroups of $G$.
Let $T$ be a maximal torus in $G$ (see \ref{rem:MaximalTorus}).
If $\{S \cap T ~|~ S \in \cS\}$ is finite, then $\cS$ consists of only finitely many conjugacy classes.
\end{cor}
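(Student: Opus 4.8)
The plan is to localize the problem to finitely many compact Lie groups and then run Bredon's induction (Lemma~\ref{lem:Bredon}) on each. First I would note that every $S \in \cS$ is actually compact: being a closed subgroup conjugate into the compact set $C$, a suitable conjugate $gSg\inv$ is a closed subset of the compact $C$, hence compact. Since $\cS$ is conjugation-invariant and compactly supported, I would apply Proposition~\ref{prop:Mostow} to $C$, obtaining finitely many conjugacy classes of $(C)$-maximal compact subgroups, say with representatives $K_1,\dots,K_m$, such that every member of $\cpt_C(G)$ conjugates into one of them. Because each $S \in \cS$ has a conjugate inside $C$ and $\cS$ is conjugation-invariant, every $S \in \cS$ is then $G$-conjugate to a subgroup of some $K_i$. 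Thus it suffices to bound the conjugacy classes of each $\cS_0^{(i)} := \{S \in \cS \mid S \leqslant K_i\}$ and then sum over the finitely many $i$.

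Next, fix $i$ and let $T_i$ be a maximal torus of the compact Lie group $K_i$ (Remark~\ref{rem:MaximalTorus}). Since $\cS$ is invariant under conjugation by all of $G \supseteq K_i$, the subcollection $\cS_0^{(i)}$ is a $K_i$-conjugation-invariant set of closed subgroups of $K_i$. By Lemma~\ref{lem:Bredon} it then suffices to verify that $\{S \cap T_i \mid S \in \cS_0^{(i)}\}$ is finite, whence $\cS_0^{(i)}$ has only finitely many $K_i$-conjugacy classes, a fortiori finitely many $G$-conjugacy classes.

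The crux, which I expect to be the main obstacle, is transporting the hypothesis stated for the global torus $T$ to the local tori $T_i$. A torus is connected, so $T_i \subseteq G^e$; by Cartan--Malcev--Iwasawa it lies in a maximal compact subgroup of $G^e$, and therein in a conjugate of the maximal torus $T$ of $G$ (Remark~\ref{rem:MaximalTorus}), so there is $g_i \in G$ with $g_i T_i g_i\inv \leqslant T$. For $S \in \cS_0^{(i)}$, conjugation by $g_i$ gives
\[
g_i(S \cap T_i)g_i\inv = (g_i S g_i\inv) \cap (g_i T_i g_i\inv) = \big((g_i S g_i\inv)\cap T\big)\cap (g_i T_i g_i\inv),
\]
where $g_i S g_i\inv \in \cS$ and $g_i T_i g_i\inv$ is a fixed subtorus of $T$. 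As $S$ ranges over $\cS_0^{(i)}$, the intersection $(g_i S g_i\inv)\cap T$ lies in the finite set $\{S' \cap T \mid S' \in \cS\}$ of the hypothesis; intersecting each of these finitely many groups with the single fixed subtorus $g_i T_i g_i\inv$ produces a finite set, and applying the bijection $x \mapsto g_i\inv x g_i$ on subgroups shows $\{S \cap T_i \mid S \in \cS_0^{(i)}\}$ is finite. Feeding this into Lemma~\ref{lem:Bredon} for each $i$, and using the finiteness of $m$ from Proposition~\ref{prop:Mostow}, would then assemble the claim that $\cS$ has only finitely many conjugacy classes.
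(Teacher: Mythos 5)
Your proof is correct and follows exactly the route the paper intends: its own proof is the one-line assertion that the corollary is immediate from Proposition~\ref{prop:Mostow} and Lemma~\ref{lem:Bredon}, and your argument supplies precisely the missing details (reduction to the finitely many $(C)$-maximal compact subgroups $K_i$, and the conjugation of each local maximal torus $T_i$ into $T$ to transport the finiteness hypothesis). Nothing to correct.
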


\begin{proof}
The conclusion is immediate from Proposition~\ref{prop:Mostow} and Lemma~\ref{lem:Bredon}.
\end{proof}

The following observation is recorded in the literature by R~Palais \cite[1.7.27]{Palais_book}.
His proof relies on the Peter--Weyl theorem (1927) and C-T~Yang's theorem \cite{Yang}.

\begin{thm}[Palais]\label{thm:Palais}
Any compact Lie group $K$ has only countably many conjugacy classes of closed subgroups.
(Recall that $K$ has only finitely many components.)
\end{thm}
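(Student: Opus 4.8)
The plan is to reduce everything to Bredon's Lemma (\ref{lem:Bredon}) by exhibiting the set $\cS$ of all closed subgroups of $K$ as a \emph{countable} union of conjugation-invariant subfamilies, each of which meets the fixed maximal torus $T$ (see \ref{rem:MaximalTorus}) in only \emph{finitely} many subgroups. Throughout, write $\cA$ for the set of closed subgroups of $T$, and for each $S \in \cS$ put
\[
\Phi(S) ~:=~ \{ (g\inv S g) \cap T ~|~ g \in K\} ~\subseteq~ \cA.
\]
The substitution $g \mapsto hg$ shows $\Phi(hSh\inv) = \Phi(S)$, so $\Phi$ is constant on $K$-conjugacy classes; and taking $g=1$ gives $S \cap T \in \Phi(S)$.

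First I would check that $\cA$ is countable. A closed subgroup $A \leqslant T$ is an extension of the finite group $A/A^0$ by the subtorus $A^0$. The subtori of $T \iso (\R/\Z)^r$ biject with the rational subspaces of $\R^r$ (those spanned by lattice vectors of $\ker\exp$), of which there are countably many; and for each subtorus $R$ the subgroups $A$ with $A^0 = R$ biject with the finite subgroups of the torus $T/R$, which all lie in the countable torsion subgroup and so number countably many. Summing over the countably many $R$, the set $\cA$ is countable. Consequently the collection of finite subsets of $\cA$ is countable, and $\cA$ is totally disconnected inside the Pompeiu--Hausdorff metric space $(\Cpt(T),d_{PH})$ from the proof of Proposition~\ref{prop:Mostow}, being a countable metric space.

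The crux is the finiteness of $\Phi(S)$ for each $S \in \cS$. Realize $K$ as a compact real-algebraic group of matrices; then $S$ and $T$ are algebraic subgroups and
\[
\Gamma ~:=~ \{ (g,x) \in K \x T ~|~ g x g\inv \in S \}
\]
is a closed semialgebraic set whose fiber over $g$ is exactly $(g\inv S g)\cap T$. By Hardt's semialgebraic triviality theorem there is a finite semialgebraic partition of the base $K$ over each piece of which the family $\Gamma$ is trivial; in particular, on each piece the fiber $(g\inv S g)\cap T$ varies continuously as a point of $(\Cpt(T),d_{PH})$. A semialgebraic piece has only finitely many connected components, and a continuous map from a connected space into the totally disconnected set $\cA$ is constant; hence $g \mapsto (g\inv S g)\cap T$ assumes only finitely many values. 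Thus $\Phi(S)$ is finite, so $\Phi$ takes values in the countable set of finite subsets of $\cA$.

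Finally I would assemble the conclusion. Since $\Phi$ is conjugation-invariant with countable image, $\cS = \bigsqcup_{v} \Phi\inv(v)$ is a partition into countably many conjugation-invariant subfamilies indexed by the values $v$ of $\Phi$. Fix such a $v$: for every $S \in \Phi\inv(v)$ we have $S \cap T \in \Phi(S) = v$, so $\{S \cap T ~|~ S \in \Phi\inv(v)\} \subseteq v$ is finite, whence Bredon's Lemma (\ref{lem:Bredon}) gives that $\Phi\inv(v)$ has only finitely many conjugacy classes. A countable union of finite sets being countable, $\cS$ has only countably many conjugacy classes. (Equivalently, each $\Phi\inv(v)$ is compactly supported with witness $C = K$ in the sense of Definition~\ref{defn:compactlysupported}, so one may invoke Corollary~\ref{cor:transtoral} in place of \ref{lem:Bredon}.) The main obstacle is precisely the finiteness of $\Phi(S)$: intersecting a moving conjugate $g\inv S g$ with the fixed $T$ is only semicontinuous, so the value can drop on lower strata, and the argument must both bound the number of strata (via semialgebraicity and Hardt triviality) and forbid continuous variation of the intersection within a stratum (via the countability of $\cA$ forcing a totally disconnected target); everything else is bookkeeping around Bredon's Lemma.
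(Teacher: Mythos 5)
Your proposal is correct, but it takes a genuinely different route from the paper. The paper's proof is an induction on the dimension and component count of $K$: it reuses the compactness of $(\cpt(K),d_{PH})$ established in Proof~\ref{prop:Mostow} together with the Montgomery--Zippin neighboring-subgroups theorem to show that, outside any ball around the point $K$ itself, the closed subgroups fall into finitely many subconjugacy classes of proper subgroups, to which the inductive hypothesis applies. You instead stratify $\cS$ by the countable-valued invariant $\Phi(S)=\{(g\inv Sg)\cap T\}$ and feed each stratum to Bredon's transtoral Lemma~\ref{lem:Bredon}; the two nontrivial inputs are the countability of the closed subgroups of a torus (fine) and the finiteness of $\Phi(S)$, which you get from Chevalley's algebraicity of compact linear groups plus Hardt's semialgebraic triviality plus the total disconnectedness of a countable metric subspace of $(\Cpt(T),d_{PH})$ --- all of which I checked does go through (the trivialization over a semialgebraic piece with compact fibre does give $d_{PH}$-continuity of $g\mapsto (g\inv Sg)\cap T$, and a semialgebraic piece has finitely many components). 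The trade-off is worth flagging: the paper explicitly advertises its proof as avoiding the Peter--Weyl theorem and Yang's theorem, whereas your argument reintroduces both --- Peter--Weyl to realize an abstract compact Lie group as a matrix group, and Yang's theorem implicitly via Lemma~\ref{lem:Bredon} (the paper notes Yang's result is a step in its proof) --- and additionally imports Hardt's theorem from real algebraic geometry. What your approach buys is independence from the inductive scheme and an explicit conjugation-invariant countable ``coordinate'' $\Phi$ on the set of closed subgroups, which is a nice structural statement in its own right; what the paper's approach buys is self-containment within the Pompeiu--Hausdorff/Montgomery--Zippin toolkit it has already built.
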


We provide an alternative proof that flows from the above-used first principles, avoiding the aforementioned results in harmonic analysis and differential geometry.

\begin{proof}
Inductively assume the statement is true for compact Lie groups of either lesser dimension or fewer components than $K$, so in particular for all proper closed subgroups of $K$ \cite[27]{CartanE}; the basic case is the trivial group.
For each integer $n>0$, define $X_n := \cpt(K) - B(K,1/n)$, the complement of the open ball of radius $1/n$ centered at the point $K$ in the compact metric space $\cpt(K)$ from Proof~\ref{prop:Mostow}.

By the neighboring-subgroups theorem\footnote{Similarly inspired by Mostow, Palais' later proof \cite[4.2]{Palais} of it avoids differential geometry.} \cite{MZ_neighbor}, for each compact subgroup $H$ of $K$, there exists $\eps_H>0$ such that $d_{PH}(H',H)<\eps_H$ implies $H' \preccurlyeq H$.
Since $X_n$ is compact and consists of elements satisfying the inductive hypothesis, we obtain that $X_n$ has only finitely many conjugacy classes of elements.
Therefore $\cpt(K) = \{K\} \cup \bigcup_{n=1}^\infty X_n$ has only countably many conjugacy classes of elements.
\end{proof}

Lastly, we now recover the Lie case of \cite[3.1]{AAV}, which more generally states that any locally compact Hausdorff group $G$ has at most its \textbf{weight} $wG$ (that is, the minimum cardinality of a base for the topology) for the number of conjugacy classes of compact subgroups $H$ having coset space $G/H$ being finite-dimensional.

\begin{cor}[Antonyan--Antonyan--Varela-Velasco]\label{cor:ccc}
Any Lie group $G$ has only countably many conjugacy classes of compact subgroups.
\end{cor}

\begin{proof}
The Lindel\"of and locally compact space $G$ is \textbf{hemicompact} \cite[8:a]{Arens_hemicompact}.
In other words, $G = \bigcup_{n=1}^\infty C_n$ is the ascending union of compact sets (so $\sigma$-compact) such that any compact set in $G$ is contained in some member $C_n$.
So
\[
\cpt(G) ~=~ \bigcup_{n=1}^\infty \cpt_{C_n}(G).
\]
By Proposition~\ref{prop:Mostow} then Theorem~\ref{thm:Palais}, each $\cpt_{C_n}(G)$ has only countably many conjugacy classes of elements.
Therefore $\cpt(G)$ does also.
\end{proof}

%------------------------------------------------------------------------------
\section{Local finiteness of orbit types, II: miscellaneous applications}

Any abelian Lie group is isomorphic to the finite product of cyclic groups (finite or not), the multiplicative group $U_1$ of unit-norm complex numbers, and the additive group $\R$ of real numbers (for example, \cite[0:5.4]{Bredon_TG}).
That is, it is the product of a finitely generated abelian group, a finite-dimensional torus, and a euclidean space.

\begin{lem}\label{lem:Banach_simplex}
Let $A$ be an abelian Lie group.
Let $(V,\|\cdot\|)$ be an $A$-Banach space.
Any convex simplex in a Palais $A$-subset $V_0$ of $V$ has only finitely many orbit types.
\end{lem}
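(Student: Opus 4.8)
The plan is to compute the isotropy subgroups via a ``Fourier support'' and then reduce the whole statement to a single finiteness claim about subgroups of the character lattice, which I would settle by a dimension induction fueled by Noetherianity.

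First I would set up the reduction to a compact group. Since the $A$-action on $V_0$ is Palais, it is proper, so every isotropy group $A_x$ with $x\in\Delta$ is compact. As $A$ is an abelian Lie group, $A\cong\R^c\times T^b\times\Z^a\times F$ with $F$ finite, and every compact subgroup lies in the unique maximal compact subgroup $K:=T^b\times F$; hence $A_x=K_x$ for each $x\in\Delta$. Because $A$ is abelian, conjugation is trivial, so the orbit type of $x$ is simply the subgroup $A_x=K_x$ itself. (Alternatively one may strip off $F$ by invoking Corollary~\ref{cor:transtoral} to reduce to the maximal torus $T=T^b$; the heart of the matter is unchanged.) It therefore suffices to show that the compact abelian Lie group $K$, acting linearly and isometrically on $V$, realizes only finitely many isotropy subgroups on $\Delta=\mathrm{conv}(v_0,\dots,v_m)$.

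Next I would bring in weights. Complexifying to $V_\C$ and using, for each character $\chi$ in the finitely generated dual $\hat K\cong\Z^b\oplus\hat F$, the isotypic projection $P_\chi(w)=\int_K\overline{\chi(k)}\,k\cdot w\,dk$, the Peter--Weyl totality of characters makes the family $\{P_\chi\}$ jointly injective. Since $P_\chi\circ k=\chi(k)P_\chi$, one has $P_\chi\big((k-\mathrm{id})x\big)=(\chi(k)-1)P_\chi x$, whence $(k-\mathrm{id})x=0$ if and only if $\chi(k)=1$ for every $\chi$ in the support $\mathrm{supp}(x):=\{\chi:P_\chi x\ne 0\}$. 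Thus $K_x=\bigcap_{\chi\in\mathrm{supp}(x)}\ker\chi$, so $K_x$ is determined by the subgroup $\Lambda_x:=\langle\mathrm{supp}(x)\rangle\leqslant\hat K$, and it is enough to prove that $\{\Lambda_x:x\in\Delta\}$ is finite.

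The crux is this last finiteness, and it is the step I expect to be the main obstacle. Writing $x=\sum_i t_i v_i$, we get $P_\chi x=\sum_i t_i P_\chi v_i$, so $\chi\in\mathrm{supp}(x)$ precisely when $t=(t_i)$ avoids the linear subspace $N_\chi:=\ker\big(\R^{m+1}\to V_\C,\ t\mapsto\sum_i t_i P_\chi v_i\big)$. I would therefore prove the following purely algebraic statement by induction on $\dim W$: for any finite-dimensional real $W$ and any family $(N_\chi)_\chi$ of subspaces indexed by a subset of a finitely generated abelian group $\Gamma$, the set $\{\langle\{\chi:t\notin N_\chi\}\rangle:t\in W\}$ of subgroups of $\Gamma$ is finite. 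The difficulty is that the distinct $N_\chi$ — equivalently the vanishing slices of $\Delta$ — may be infinite in number, so a naive stratification produces infinitely many strata. The resolution uses that $\Gamma=\hat K$ is Noetherian: the generic support group $\langle\{\chi:N_\chi\ne W\}\rangle$ is finitely generated, hence already generated by a finite subset $F_0$ of characters (each with $N_\chi\ne W$). For every $t$ outside the finite union $\bigcup_{\chi\in F_0}N_\chi$ the support contains $F_0$, so $\langle\mathrm{supp}(x)\rangle$ equals the generic group; and on each proper subspace $N_\chi$ with $\chi\in F_0$ one recurses, since $\dim N_\chi<\dim W$ and for $t\in N_\chi$ the restricted family $(N_{\chi'}\cap N_\chi)_{\chi'}$ computes exactly the same subgroups. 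This exhibits $\{\Lambda_x:x\in\Delta\}$ as a finite union of finite sets, and pushing back through the dictionary $K_x=\bigcap_{\chi\in\mathrm{supp}(x)}\ker\chi$ then $A_x=K_x$ yields finitely many orbit types.
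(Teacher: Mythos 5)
Your proof is correct, but it follows a genuinely different route from the paper's. The paper argues by induction on $\dim\Delta$: assuming infinitely many orbit types, it extracts a convergent sequence $x_n\to x$ with distinct isotropies, uses the Palais hypothesis to place the tail inside an $A_x$-tube (so $A_{x_n}\leqslant A_x$), and then exploits convexity --- the isotropy is constant along the open segment from $x$ to $x_n$ inside the tube --- together with a pigeonhole onto a codimension-one face to push the contradiction down to the inductive hypothesis. You instead use the Palais hypothesis only once, to conclude that all isotropy groups are compact and hence lie in the maximal compact subgroup $K=T^b\times F$; you then identify $A_x$ as the annihilator $\bigcap_{\chi\in\mathrm{supp}(x)}\ker\chi$ of the character support via isotypic projections (joint injectivity of the $P_\chi$ being the Banach-space Peter--Weyl fact), and reduce everything to a clean algebraic lemma: the subgroups of a finitely generated abelian group generated by the complements of a subspace arrangement in $\R^{m+1}$ form a finite set, which your Noetherian induction on the dimension of the parameter space establishes correctly (the finite generating set $F_0$ of the generic support group, chosen from characters with $N_\chi\neq W$, is the key device). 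Your version buys a structural description of the isotropy groups and dispenses with the tube/slice machinery and the pigeonhole geometry; the paper's version buys freedom from harmonic analysis, which is consistent with the author's explicitly stated preference elsewhere (compare the alternative proof of Theorem~\ref{thm:Palais}, advertised as avoiding Peter--Weyl). Both are ultimately dimension inductions, yours on the parameter simplex of coefficients, the paper's on the faces of $\Delta$ itself.
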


The convex simplex is the convex hull on only finitely many extremal points.

\begin{proof}
Let $\Delta$ be a convex simplex in $V_0$.
We shall proceed by induction on $\dim(\Delta)$.

Suppose $\dim(\Delta)=1$.
Assume the set $\cO_A(\Delta)$ of $A$-orbit types (\ref{defn:orbit_types}) is infinite.
Since $\Delta$ is compact, there exist $x \in \Delta$ and a sequence $\{x_n\}$ in $\Delta$ with $\|x_n-x\| \searrow 0$ and the isotropy groups $A_{x_n}$ being distinct (that is, non-conjugate in abelian $A$).
Since the action of $A$ on $V_0$ is Palais, there exists an $A_x$-tube at $x$ (\ref{defn:slice}, \ref{lem:approx_slice}), that is an $A$-neighborhood $U$ of $Ax$ in $V_0$ and $A$-map $f: U \longra A/A_x$ \cite[2.1.1]{Palais}.
There is $N$ so that, for all $n \geqslant N$, we have $x_n \in U$ hence $A_{x_n} \leqslant A_x$ as $A$ is abelian.
The isotropy of any point on the line from such $x_n$ to $x$ contains $A_{x_n} \cap A_x = A_{x_n}$.
Then $A_{x_N} \leqslant A_{x_{N+1}} \leqslant A_{x_N}$, by collinearity in $\Delta$.
This contradicts $A_{x_N} \neq A_{x_{N+1}}$.

Inductively assume that the lemma is true for all convex simplices in $V_0$ of dimension $d > 0$.
Suppose $\dim(\Delta)=d+1$.
Assume that $\cO_A(\Delta)$ is infinite.
Since $\Delta$ is compact, there exist $x \in \Delta$ and a sequence $\{x_n\}$ in $\Delta$ with $\|x_n-x\| \searrow 0$ and the isotropy groups $A_{x_n}$ being distinct.
Since the action of $A$ on $V_0$ is Palais, there exists an $A_x$-tube $(U,f)$ at $x$ (\ref{defn:slice}, \ref{lem:approx_slice}).
By the pigeonhole principle, there exist a convex $(d+1)$-simplex $\Delta'$ and a $d$-dimensional face $\Delta''$ of $\Delta'$ such that $x \in \Delta' \subset \Delta \cap U$ and, for infinitely many $n$, the line $L_n$ from $x$ to $x_n$ intersects $\Delta''$.
Re-index so that this statement is true for all $n$.
For each $n$, write $\{y_n\} := L_n \cap \Delta''$.
By inductive hypothesis, $\cO_A(\Delta'')$ is finite.
So the subset $\cO_A\{y_n ~|~ n\in \N\}$ is finite.
However, by the collinearity argument of the basic step, within any $A_x$-tube through $x$, the line from $x$ to any other point in $U$ has constant isotropy in $U$ away from $x$.
Then $A_{x_n} = A_{y_n}$ in $L_n$ for each $n$.
So $\cO_A\{x_n ~|~ n \in \N\}$ is finite, a contradiction.
\end{proof}

\begin{cor}\label{cor:Banach_simplex}
Let $G$ be an arbitrary Lie group.
Let $(V,\|\cdot\|)$ be a $G$-Banach space.
Any convex simplex in a Palais $G$-subset consists of only finitely many orbit types.
\end{cor}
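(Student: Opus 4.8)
The plan is to deduce this from the abelian case (Lemma~\ref{lem:Banach_simplex}) by means of the transtoral induction of Corollary~\ref{cor:transtoral}. Fix a convex simplex $\Delta$ in a Palais $G$-subset $V_0$ of $V$, and let $\cS$ be the set of all $G$-conjugates of the isotropy groups $\{G_x \mid x\in\Delta\}$. By construction $\cS$ is conjugation-invariant, and its set of conjugacy classes is exactly $\cO_G(\Delta)$; so it suffices to show that $\cS$ has only finitely many conjugacy classes. First I would record that $\cS$ is \emph{compactly supported} in the sense of Definition~\ref{defn:compactlysupported}: since $\Delta$ is compact and the $G$-action is Palais, the set $C := \langle\Delta,\Delta\rangle_G = \{g\in G \mid g\Delta\cap\Delta\neq\varnothing\}$ has compact closure, and $gx=x\in\Delta$ forces $g\in C$, so every $G_x$ lies in $C$; hence each member of $\cS$ conjugates into the compact set $\ol{C}$.

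Next, fix a maximal torus $T$ of $G$ (Remark~\ref{rem:MaximalTorus}). By Corollary~\ref{cor:transtoral}, it is enough to prove that the set $\{S\cap T \mid S\in\cS\}$ is finite. Writing $S = gG_xg\inv$ with $g\in G$ and $x\in\Delta$, one computes $S\cap T = T\cap gG_xg\inv = T_{gx}$, the $T$-isotropy group of $gx$; as $T$ is abelian these are exactly the $T$-orbit types, so
\[
\{S\cap T \mid S\in\cS\} ~=~ \{\,T\cap G_y \mid y\in G\Delta\,\} ~=~ \cO_T(G\Delta).
\]
Restricting along $T\leqslant G$ makes $V$ a $T$-Banach space and $V_0$ a Palais $T$-subset, with $\Delta$ still a convex simplex; so Lemma~\ref{lem:Banach_simplex} already gives that $\cO_T(\Delta)$ is finite. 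The remaining task is to propagate finiteness from the simplex $\Delta$ to its entire orbit $G\Delta$.

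For this I would exploit the normalizer $N_G(T)$ together with Palais-properness. For $y=gx\in G\Delta$, conjugating by $g\inv$ gives $g\inv T_y g = (g\inv Tg)\cap G_x$, a subgroup of the conjugate maximal torus $g\inv Tg$. Since all maximal tori are conjugate, choose $n\in N_G(T)g$ with $n(g\inv Tg)n\inv=T$; then $T_y$ agrees, up to the finite Weyl action of $N_G(T)/T$ on subgroups of $T$, with the $T$-isotropy group $T_{nx}$ of a point $nx$ of the conjugate simplex $n\Delta$. By Lemma~\ref{lem:Banach_simplex} each simplex $n\Delta$ carries only finitely many $T$-orbit types, and Palais-properness, via the finiteness of $(\ol{C})$-maximal compact subgroups up to conjugacy (Proposition~\ref{prop:Mostow}), should restrict the relevant conjugators $n$ to finitely many classes. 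Together these would force only finitely many subgroups $T_y$ of the \emph{fixed} torus $T$ to occur, so that $\cO_T(G\Delta)$ is finite and Corollary~\ref{cor:transtoral} applies.

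The hard part will be exactly this last propagation step. The orbit $G\Delta$ is neither compact nor a simplex, so the abelian lemma does not apply to it directly, and naive compactness arguments fail, because a compact set can carry infinitely many $T$-orbit types (as the $C(U_1)$ footnote already shows). The crux is to show that, although $g$ ranges over all of $G$, only finitely many subgroups $T\cap gG_xg\inv$ of the fixed torus actually arise; this is where convexity (through Lemma~\ref{lem:Banach_simplex}), finiteness of the Weyl group, and Palais-properness (through Proposition~\ref{prop:Mostow}) must be combined with care to avoid the circularity of assuming $\cO_G(\Delta)$ finite in advance.
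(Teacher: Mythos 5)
Your setup reproduces the paper's proof exactly up to the point where you stop: the paper observes that the action on $V_0$ is Bourbaki, so $C=\{g\in G\mid g\Delta\cap\Delta\neq\varnothing\}$ is compact and $\cS:=\bigcup\cO_G(\Delta)$ is compactly supported, and then concludes in one line ``by Corollary~\ref{cor:transtoral} and Lemma~\ref{lem:Banach_simplex} we are done.'' The difficulty you isolate is therefore not a failure to reproduce a known argument but a genuine lacuna that the paper's own proof passes over in silence: Corollary~\ref{cor:transtoral} requires finiteness of $\{S\cap T\mid S\in\cS\}$ for the \emph{conjugation-closed} family, which as you compute is $\cO_T(G\Delta)=\{T\cap gG_xg\inv\mid g\in G,\ x\in\Delta\}$, whereas Lemma~\ref{lem:Banach_simplex} only yields finiteness of $\cO_{g\inv Tg}(\Delta)$ for each fixed $g$ separately; the union over $g$ is uncontrolled. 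Your proposed repair does not close this. Conjugating $g\inv Tg$ back to $T$ by $n\in N_G(T)g$ merely trades the translate $g\Delta$ for the translate $n\Delta$, with $n$ still ranging over all of $G$, and Proposition~\ref{prop:Mostow} bounds the \emph{target} subgroups $K_1,\dots,K_n$ up to conjugacy while saying nothing about the conjugating elements, so it cannot reduce the family of translates to finitely many. As submitted, the proposal does not prove the corollary.

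Moreover, this step cannot be dismissed as routine, because the paper's own Example at the end of \S5 shows the needed implication ``$\cO_T(\Delta)$ finite $\Rightarrow\cO_T(G\Delta)$ finite'' is false: for $G=T\rtimes_{\id}GL_2(\Z)$ with $T=U_1\x U_1$ and $H=U_1\x 1$, the single orbit $G/H$ satisfies $\{T\cap gHg\inv\mid g\in G\}=\{\R(a,b)/\Z^2\mid \gcd(a,b)=1\}$, which is infinite. The homogeneous space $G/H$ is a Palais $G$-space and $G$-embeds in a $G$-Banach space, so already for the $0$-simplex $\Delta=\{x\}$ with $G_x=H$ the conjugation closure $\cS$ is compactly supported (inside $T$) and consists of a single conjugacy class, yet $\{S\cap T\mid S\in\cS\}$ is infinite and the hypothesis of Corollary~\ref{cor:transtoral} fails outright. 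So any correct proof must either verify the transtoral hypothesis on all of $G\Delta$ by some mechanism exploiting convexity beyond Lemma~\ref{lem:Banach_simplex}, or bypass Corollary~\ref{cor:transtoral} entirely (for instance by running a non-abelian analogue of the collinearity argument with approximate slices directly on $\Delta$). You have correctly diagnosed where the argument is incomplete --- indeed the same diagnosis applies to the paper's one-line proof --- but the crux remains unproved in your proposal.
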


\begin{proof}
Let $\Delta$ be a convex simplex in Palais $V_0$.
Since the $G$-space $V_0$ is Bourbaki \cite[1.6c]{Biller_proper}, the set $C := \{g \in G ~|~ g \Delta \cap \Delta \neq \varnothing \}$ is compact \cite[I:3.21]{tomDieck_TG}.
Then $\bigcup \cO_G(\Delta)$ is supported in $C$.
So, by Corollary~\ref{cor:transtoral} and Lemma~\ref{lem:Banach_simplex}, we are done.
\end{proof}

\begin{rem}
If $K$ is compact and $\dim\,V$ is finite, by covering the unit sphere by orthogonal-action charts \cite[4]{Bochner}, inductively any orthogonal $K$-representation $V$ has finitely many orbit types \cite[Exercise II:2]{Bredon_TG}.
Using a locally finite covering by linear $G_x$-tubes, any Palais locally linear $G$-manifold $M$ \emph{locally} has finitely many orbit types; the original case of compact group $K$ and smooth $K$-manifold is due to C-T~Yang~\cite{Yang}.
Yang's result is a step in Lemma~\ref{lem:Bredon}, used for Corollary~\ref{cor:Banach_simplex}.
\end{rem}

Next, we shall explore $\cO_G$ for the \textbf{convex hull} $\co(C)$ of \emph{infinite} subsets $C \subset V$: that is, $\co(C)$ is the intersection of all convex sets of $V$ containing $C$ \cite[3.19a]{Rudin}.
Equivalently, $\co(C)$ is the set of (finite) convex linear combinations of points in $C$.
Recall that there exist compact $C$ whose $\co(C)$ is not closed \cite[Exercise~3:20]{Rudin}.
Write $\cco(C)$ for the \textbf{closed convex hull}, that is, the metric closure in $V$ of $\co(C)$.
Equivalently, $\cco(C)$ is the intersection of all closed convex sets of $V$ containing $C$.
For any subset $C \subset V$ and $\eps>0$, consider the closed neighborhood $\cD_\eps(C)$ in $\cco(C)$:
\[
\cD_\eps(C) ~:=~ \{ y \in \cco(C) ~|~ d(y,C) \leqslant \eps \}.
\] 

\begin{lem}\label{lem:singleorbit}
Let $A$ be an abelian Lie group.
Let $(V,\|\cdot\|)$ be an $A$-Banach space.
Let $V_0$ be a convex Palais $A$-subset of $V$.
Let $C \subset \Int\,V_0$ be a compact set of only a single orbit type.
There exists $\eps>0$ such that $\cD_\eps(C)$ has only a single orbit type.
\end{lem}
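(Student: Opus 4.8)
The plan is to show that the unique orbit type surviving on $\cD_\eps(C)$ is the common isotropy group of $C$. Since $A$ is abelian, conjugacy classes of subgroups are singletons, so the single-orbit-type hypothesis means every $x \in C$ shares one isotropy group $A_0 := A_x$. I would first record the cheap inclusion in one direction: for any finite convex combination $y = \sum_i \lambda_i x_i$ of points $x_i \in C$, linearity of the isometric $A$-action shows any $g \in A_0$ fixes $y$, so $A_0 \leqslant A_y$; continuity of the action then extends this to $A_0 \leqslant A_y$ for every $y \in \cco(C)$, in particular on $\cD_\eps(C)$. Hence it suffices to produce $\eps > 0$ forcing the reverse inclusion $A_y \leqslant A_0$ throughout $\cD_\eps(C)$.

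I would establish this by contradiction. If no such $\eps$ exists, then for each $n$ there is $y_n \in \cD_{1/n}(C)$ whose isotropy $A_{y_n}$ strictly contains $A_0$. As $C$ is compact, the distance $d(y_n,C) \leqslant 1/n$ is attained, so choose $x_n \in C$ with $\|y_n - x_n\| \leqslant 1/n$; passing to a subsequence, $x_n \to x \in C$, whence $y_n \to x$ as well. Because $C$ is a compact subset of the open set $\Int V_0$, some uniform $\delta > 0$ has the closed $\delta$-neighborhood of $C$ inside $\Int V_0$, so $y_n \in \Int V_0 \subseteq V_0$ for all large $n$.

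Next I would invoke the slice theorem at $x$, exactly as in the proof of Lemma~\ref{lem:Banach_simplex}: since $A$ is Lie, there is an $A_x$-tube $(U,f)$ at $x$ (\ref{defn:slice}, \ref{lem:approx_slice}), namely an open $A$-neighborhood $U = AS$ of $x$ in $V_0$ with $A$-map $f : U \longra A/A_x$ and slice $S = f\inv(A_x)$ \cite[2.1.1]{Palais}. Since $y_n \to x$ with $y_n \in V_0$ for large $n$, eventually $y_n \in U$. Writing $y_n = a \cdot s$ with $a \in A$ and $s \in S$, and noting $A_s \leqslant A_x$ because $s$ lies over the trivial coset, abelianness gives $A_{y_n} = a A_s a\inv = A_s \leqslant A_x = A_0$, contradicting $A_{y_n} \gneq A_0$. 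Thus some $\eps > 0$ works, and on $\cD_\eps(C)$ every isotropy group equals $A_0$.

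The main obstacle is precisely this final step: the hull construction only yields $A_{y_n} \geqslant A_0$ automatically, and converting it to the equality $A_{y_n} = A_0$ by a bound from above inside the tube is where abelianness is indispensable (the nonabelian case would instead be reduced to this one by transtoral induction, compare Corollary~\ref{cor:Banach_simplex}). The one point-set wrinkle to watch is that $\cco(C)$ is a closure formed in $V$ and may exit $V_0$; this is harmless, since the contradiction involves only points $y_n$ approaching $C \subset \Int V_0$, which the uniform $\delta$-neighborhood confines to $V_0$ for large $n$.
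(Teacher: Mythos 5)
Your proof is correct and follows essentially the same route as the paper's: the upper bound $A_y \leqslant A_0$ on $\cD_\eps(C)$ comes from an $A_x$-tube at a nearby point $x \in C$ (with abelianness killing the conjugation), and the lower bound $A_0 \leqslant A_y$ comes from linearity of the isometric action on convex combinations. The only cosmetic differences are that you obtain the uniform $\eps$ by a sequential contradiction where the paper covers $C$ by tubes and invokes the tube lemma directly, and you pass from $\co(C)$ to $\cco(C)$ via closedness of the fixed set $V^{A_0}$ where the paper uses a tube at $y$ and a nearby point of $\co(C)$; both variants are sound.
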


\begin{proof}
Since the action of $A$ on $\cco(C) \subset V_0$ is Palais, at each point $x \in C$ there is an $A_x$-tube $(U_x \subset \cco(C),f_x: U_x \longra A/A_x)$ \cite[2.3.1]{Palais}.
Since $C$ is compact, there exists $\eps>0$ such that the open neighborhood $\bigcup_{x \in C} U_x$ of $C$ in $\cco(C)$ contains the closed neighborhood $\cD_\eps(C)$ \cite[Exercise~27:2d]{Munkres}.
We show $\cO_A(\cD_\eps C) = \cO_A(C)$.

Since $A$ is abelian, each point of $C$ has common isotropy $K \leqslant A$.
If $y \in \co(C)$, note $A_y \geqslant K$.
If $y \in \cco(C)-\co(C)$, there exist an $A_y$-tube $(U_y \subset \cco(C),f_y)$ and a point $z \in U_y \cap \co(C)$, so $A_y \geqslant A_z \geqslant K$.
Thus, for each $y \in \cD_\eps(C)$, we have both $A_y \geqslant K$ and a point $x \in C$ such that $y \in U_x$, so $A_y \leqslant A_x = K$, hence $A_y=K$.
\end{proof}

\begin{cor}
Let $G$ be an arbitrary Lie group.
Let $(V,\|\cdot\|)$ be a $G$-Banach space.
Let $V_0$ be a convex Palais $G$-subset of $V$.
Let $T$ be a maximal torus in $G$ (see \ref{rem:MaximalTorus}).
Let $C \subset \Int\,V_0$ be a compact set of only a single $T$-orbit type.
There exists $\eps>0$ such that $\cD_\eps(C) \supset C$ has only finitely many $G$-orbit types.
\end{cor}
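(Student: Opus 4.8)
The plan is to follow the proof of Corollary~\ref{cor:Banach_simplex} almost verbatim, with Lemma~\ref{lem:singleorbit} playing the role there of Lemma~\ref{lem:Banach_simplex}: first I would use the maximal torus $T$ to gain orbit-type control over a thickening of $C$, then promote it to $G$ by transtoral induction (Corollary~\ref{cor:transtoral}).

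To choose $\eps$, note that $T$, being a torus, is a compact hence closed subgroup of $G$, and that the restriction to $T$ of the Palais $G$-action on $V_0$ is again Palais: for neighborhoods $U,V$ one has $\langle U,V\rangle_T = \langle U,V\rangle_G\cap T$, which is precompact as a closed subset of the precompact set $\langle U,V\rangle_G$. Thus $V_0$ is a convex Palais $T$-subset of the $T$-Banach space $V$, and $C\subset\Int V_0$ is compact of a single $T$-orbit type. Lemma~\ref{lem:singleorbit}, applied with the abelian Lie group $A:=T$, then yields an $\eps>0$ for which $\cD_\eps(C)$ has a single $T$-orbit type; I fix this $\eps$. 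Since $T$ is abelian, this says the $T$-isotropy $T_y = G_y\cap T$ equals one fixed closed subgroup $K\leqslant T$ for every $y\in\cD_\eps(C)$.

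Next I would assemble the hypotheses of Corollary~\ref{cor:transtoral} for the conjugation-closure $\cS := \bigcup\cO_G(\cD_\eps(C))$. Recall that in the Banach space $V$ the closed convex hull $\cco(C)$ of the compact set $C$ is compact \cite[3.20c]{Rudin}, so its closed subset $\cD_\eps(C)$ is compact as well. As the $G$-space $V_0$ is Bourbaki \cite[1.6c]{Biller_proper}, the set $C':=\{g\in G ~|~ g\,\cD_\eps(C)\cap\cD_\eps(C)\neq\varnothing\}$ is compact \cite[I:3.21]{tomDieck_TG}; since each isotropy group $G_y$ with $y\in\cD_\eps(C)$ lies in $C'$, the family $\cS$ is compactly supported in the sense of Definition~\ref{defn:compactlysupported}.

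It remains to verify the transtoral hypothesis that $\{S\cap T ~|~ S\in\cS\}$ is finite, after which Corollary~\ref{cor:transtoral} gives that $\cS$ has only finitely many conjugacy classes, that is, $\cO_G(\cD_\eps(C))$ is finite. I expect this verification to be the main obstacle. The difficulty is that $\cS$ is closed under $G$-conjugation, so although Lemma~\ref{lem:singleorbit} pins down $G_y\cap T$ for $y$ in $\cD_\eps(C)$ itself, the collection $\{S\cap T\}$ equals $\{T\cap g G_y g\inv\}$, which as $z=gy$ ranges over the entire (noncompact) saturation $G\cdot\cD_\eps(C)$ is the set of $T$-isotropy groups $T_z$ of points of $G\cdot\cD_\eps(C)$, and intersection with $T$ is not continuous under $G$-translation. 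However, this is exactly the reduction already performed in the proof of Corollary~\ref{cor:Banach_simplex}, where finiteness of the $T$-orbit types of a simplex was fed into Corollary~\ref{cor:transtoral}; a single $T$-orbit type is a fortiori finitely many $T$-orbit types, so the identical reduction applies here and completes the proof.
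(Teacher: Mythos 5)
Your proposal follows the paper's proof essentially verbatim: fix $\eps$ via Lemma~\ref{lem:singleorbit} applied to the restricted (still Palais) $T$-action, deduce compact support of $\bigcup \cO_G(\cD_\eps C)$ from compactness of $\cD_\eps(C) \subset \cco(C)$ together with Bourbaki-properness of the $G$-space $V_0$, and conclude with Corollary~\ref{cor:transtoral}. The worry you raise at the end, about checking finiteness of $\{S \cap T\}$ over the full conjugation-closure rather than just over $\cD_\eps(C)$, is a fair one, but the paper's own proof makes exactly the same direct appeal to Corollary~\ref{cor:transtoral} that its proof of Corollary~\ref{cor:Banach_simplex} does, so your treatment matches the source.
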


\begin{proof}
By Lemma~\ref{lem:singleorbit}, there exists $\eps>0$ so that $\cD_\eps(C)$ has a single $T$-orbit type.
Since $\cco(C)$ is compact \cite[3.20c]{Rudin}, the above closed set is also \cite[26.2]{Munkres}.
So, since $\cD_\eps(C) \subset V_0$ is compact and the $G$-space $V_0$ is Bourbaki \cite[1.6c]{Biller_proper}, the subset $C' := \{g \in G ~|~ g \cD_\eps(C) \cap \cD_\eps(C) \neq \varnothing \}$ of $G$ also is compact \cite[I:3.21]{tomDieck_TG}.
Then $\bigcup \cO_G(\cD_\eps C)$ is supported in $C'$.
So, by Corollary~\ref{cor:transtoral}, we are done.
\end{proof}

%------------------------------------------------------------------------------
\section{Equivariant absolute neighborhood retracts, II}\label{sec:GANRs2}

For inductive methods of proof, this technical notion was introduced in \cite{Jaworowski2}.

\begin{defn}[Jaworowski]\label{defn:Jaworowski}
Any $G$-space $X$ having \textbf{finite structure} means that $X$ has only finitely many orbit types and, for each orbit type $(H)$, the quotient map $X_{(H)} \longra X_{(H)}/G$ is a $G/H$-fiber bundle with only finitely many local trivializations.
Here $(H)$ is the conjugacy class of $H$ in $G$ and $X_{(H)} := \{x \in X ~|~ (G_x)=(H) \}$.
\end{defn}

It led to a characterization \cite[4.2]{Jaworowski2} that was the culmination of three papers.
Much later, the separable hypothesis was able to be removed by \cite[1.6]{AAMV}.

\begin{lem}[Jaworowski--Antonyan \emph{et al.}]\label{lem:Jaworowski}
Let $K$ be a compact Lie group.
Let $X$ be a $K$-metrizable space of finite structure.
Then $X$ is a $K$-ANE for the class $K\text{-}\cM$ if and only if, for each closed subgroup $H$ of $K$, the $H$-skeleton $X^H$ is ANR for $\cM$.
\end{lem}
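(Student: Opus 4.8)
The plan is to treat the two implications separately; necessity is immediate, while the converse carries the weight through an induction on orbit types. For necessity, suppose $X$ is a $K\text{-ANE}(K\text{-}\cM)$. As $K$ is compact its action is automatically Palais, so the $K$-metrizable space $X$ lies in $K\text{-}\cM$; a $G$-ANE belonging to its own defining class is a $G$-ANR, so $X$ is a $K\text{-ANR}(K\text{-}\cM)$, and Smirnov's inheritance lemma~(\ref{lem:Palais}), with its group taken to be $K$ and its closed subgroup $H$, yields $X^H \in \mathrm{ANR}(\cM)$ for every closed $H \leqslant K$.

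For sufficiency I would assume every $X^H$ is an ANR and induct on the number $n$ of orbit types furnished by finite structure~(\ref{defn:Jaworowski}). Since isotropy is upper semicontinuous for the proper action, a maximal orbit type $(H)$ gives a closed invariant singular stratum $A := X_{(H)}$ whose complement $U := X - A$ is open, invariant, of finite structure, and carries only $n-1$ orbit types. Each $U^{H'} = X^{H'} - A^{H'}$ is open in the ANR $X^{H'}$, hence an ANR, so by induction $U$ is a $K\text{-ANE}(K\text{-}\cM)$.

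The single-orbit-type stratum $A$ I would handle through its bundle structure. With $N = N_K(H)$ and $W = N/H$, maximality of $(H)$ forces $A^H = \{x : G_x = H\}$ to be open in $X^H$ (a point of $X^H$ near one with isotropy exactly $H$ has isotropy both containing $H$ and subconjugate to $H$, hence equal to $H$ since compact Lie groups are cohopfian), so $A^H$ is an ANR; finite structure presents $A^H \to A/K$ as a principal $W$-bundle with finitely many trivializations, upgrading $A^H$ to a $W$-ANR, and the identification $A \cong K \times_N A^H$ gives $A \in K\text{-ANE}(K\text{-}\cM)$. The finitely many bundle charts then let me build a $K$-invariant open tube $T \supset A$ that equivariantly retracts onto $A$ and is locally a product, hence a $K$-ANE with $T \cap U$ (open in $U$) also a $K$-ANE; as $X = T \cup U$ is covered by two invariant open $K$-ANEs with $K$-ANE intersection, an equivariant local-to-global principle for neighborhood extensors closes the induction. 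I would route this last step through the James--Segal correspondence (\ref{lem:JamesSegal1}, neighborhood version, base a point), reducing to a non-equivariant patching of the orbit spaces $M_K(Z,T)$ and $M_K(Z,U)$ over $Z/K$ in the hereditarily paracompact class $\cP^*$.

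The hard part will be precisely this equivariant gluing in the non-separable metric setting: producing the invariant tube $T$ and assembling locally defined neighborhood extensions into a single continuous $K$-map over a genuine neighborhood, with no recourse to countable covers. This is where finite structure --- bounding the orbit types and the number of trivializations --- is indispensable, and where one must work in $\cP^*$ rather than $\cP$ so that the subquotients $M_K(Z,\,\cdot\,)$ remain admissible (\ref{lem:JamesSegal1},~\ref{lem:JamesSegal3}). An alternative that sidesteps the intrinsic gluing is to embed $X$ as a closed Palais $K$-subset of a $K$-Banach space $V$ and, via Proposition~\ref{prop:hull}, construct a $K$-retraction of a $K$-neighborhood of $X$ in the closed convex hull $\cco(X)$ --- itself a $K$-AE by Lemma~\ref{lem:extensor} --- built by descending recursion over the same orbit-type filtration.
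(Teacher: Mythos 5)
This lemma is not proved in the paper at all: it is imported from the literature, namely Jaworowski's characterization \cite[4.2]{Jaworowski2} in the separable case and the non-separable extension \cite[1.6]{AAMV}, so there is no in-text argument to compare yours against. Your necessity direction is correct and matches how the paper argues in analogous places (e.g.\ the proof of Theorem~\ref{thm:criterion}): compactness of $K$ makes the action automatically Palais, so the $K$-metrizable $X$ lies in $K\text{-}\cM$, a $K$-ANE belonging to its own defining class is a $K$-ANR, and Lemma~\ref{lem:Palais} yields $X^H \in \mathrm{ANR}(\cM)$.

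The sufficiency half follows the broad strategy of the cited sources (induction on orbit types, splitting off the closed maximal stratum $A = X_{(H)}$, exploiting the principal $W$-bundle structure with finitely many trivializations), but the gluing step is a genuine gap rather than a deferred detail. The implication your patching relies on --- a closed invariant $K$-ANE plus an open complementary $K$-ANE yields a $K$-ANE --- is false even for trivial $K$: a convergent sequence together with its limit point is the union of a closed AR and an open discrete ANR, yet is not locally contractible, hence not an ANR. So the extra hypothesis (that every $X^H$, in particular $X = X^{\{1\}}$ itself, is already a non-equivariant ANR) must enter the gluing, and your outline never says where. Moreover, your construction of the tube $T$ is unjustified on two counts: ``$T$ retracts onto the $K$-ANE $A$, hence is a $K$-ANE'' runs the retract principle backwards (retracts \emph{of} ANEs are ANEs, not spaces retracting \emph{onto} them), and ``locally a product, hence a $K$-ANE'' requires the slice $S$ in each tube $K \times_H S$ to be an $H$-ANE, which recurses into the very statement being proved. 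Finally, the fallback via Proposition~\ref{prop:hull} is circular: that proposition \emph{assumes} $X$ is a $G$-neighborhood retract of $\cco(X)$, which is essentially the conclusion sought. A self-contained proof should instead follow \cite[1.6]{AAMV} or \cite[4.2]{Jaworowski2}, where the equivariant extension over a neighborhood of the singular stratum is built chart-by-chart from the finitely many trivializations before any union theorem is invoked.
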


The following shorter property occurs in the $K$-metric spaces of \cite[4.1]{AAMV}.

\begin{defn}\label{defn:FTC}
A $G$-space is FTC (\textbf{finite trivializing covers}) if it satisfies the definition of finite structure (\ref{defn:Jaworowski}) without assumption of finitely many orbit types.
\end{defn}

\begin{rem}[Antonyan \emph{et al.}]
In a different direction than ours in Theorem~\ref{thm:criterion}, the generalization \cite[4.1]{AAMV} of Lemma~\ref{lem:Jaworowski} only assumes that $X$ is FTC.
%Uncredited in the last step of their proof, elimination of the hypothesis of only finitely many orbit types is James--Segal's last step in \cite[Proof~4.2]{JS1}.
\end{rem}

We affirm \cite[Question~6.5]{AAMV} by removal of the linearity of the Lie group.

\begin{thm}\label{thm:criterion}
Let $G$ be an arbitrary Lie group.
Let $T$ be a maximal torus in $G$.
Let $X$ be a Palais $G$-metrizable space supporting only finitely many $T$-orbit types.\footnote{Assume $X$ is FUI (see Definition~\ref{defn:FUI}).  Then $X$ has only finitely many $G$-orbit types (\ref{lem:FUI_cs}, \ref{cor:transtoral}). Moreover, it suffices to check the $K$-FTC hypothesis only on the upper bounds $K_1, \ldots, K_n$~(\ref{rem:Abels}).}
Suppose $X$ is $K$-FTC for all compact subgroups $K < G$.
Then $X$ is a $G$-ANR for the class $G\text{-}\cM$ if and only if: $X^K$ is ANR for the class $\cM$ for any compact $K < G$.
\end{thm}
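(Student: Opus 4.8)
The plan is to reduce the global statement, for an arbitrary Lie group $G$, to the known compact-Lie-group case by way of the inductive apparatus assembled earlier in the paper. The logical shape is an ``if and only if,'' and the easy direction is the ``only if'': if $X$ is a $G$-ANR for $G\text{-}\cM$, then for each closed (in particular compact) subgroup $K < G$ the $K$-skeleton $X^K$ is ANR for $\cM$ by Smirnov's Lemma~\ref{lem:Palais}, since a Lie group is metrizable. So I would dispense with that direction in a single sentence and spend the real effort on the converse.

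For the ``if'' direction, the strategy is a two-stage descent: from $G$ down to compact $K < G$, and then within each such $K$ to invoke the Jaworowski--Antonyan criterion. First I would note that by Lemma~\ref{lem:induction} (Abels--Antonyan induction), to show $X \in G\text{-ANE}(G\text{-}\cM)$ it suffices to show $X \in K\text{-ANE}(K\text{-}\cM)$ for every compact subgroup $K < G$; combined with $G$-metrizability of $X$, this yields $X \in G\text{-ANR}(G\text{-}\cM)$. So the problem localizes to a fixed compact Lie group $K$. For such $K$, I want to apply Lemma~\ref{lem:Jaworowski}, whose hypotheses are that $X$ (viewed as a $K$-space) is $K$-metrizable of \emph{finite structure} and that each $K$-skeleton $X^H$ is ANR for $\cM$. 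The $K$-metrizability is inherited from $G$-metrizability by restriction, and the fixed-set ANR hypothesis is exactly the assumption $X^K$ is ANR for $\cM$ for all compact $K<G$ (applied to closed subgroups $H$ of the fixed $K$, which are compact in $G$). The remaining, and crucial, input is finite structure.

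The main obstacle is precisely verifying finite structure over each compact $K < G$, which has two parts: finitely many orbit types, and the FTC (finite-trivializing-covers) condition. The FTC condition for $K$ is supplied directly by the hypothesis that $X$ is $K$-FTC for all compact $K < G$. The finiteness of orbit types is the delicate point, and here the hypothesis that $X$ supports only finitely many $T$-orbit types for a maximal torus $T$ is essential: restricting attention to $K$, I would argue that the collection of $K$-isotropy groups occurring in $X$ is compactly supported (the action is Palais, so isotropy groups of points in a given orbit-tube lie in a compact set) and that the induced set of intersections with a maximal torus of $K$ is finite, because any maximal torus of $K$ is conjugate into $T$ and $X$ has only finitely many $T$-orbit types. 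Then Corollary~\ref{cor:transtoral} (transtoral induction) upgrades ``finitely many intersections with the maximal torus'' to ``finitely many conjugacy classes of isotropy groups,'' giving finitely many $K$-orbit types. With finite structure established, Lemma~\ref{lem:Jaworowski} gives $X \in K\text{-ANE}(K\text{-}\cM)$ for every compact $K$, and the descent in the previous paragraph completes the proof. I expect the careful bookkeeping in the compact-support and maximal-torus-conjugacy argument, ensuring the $T$-orbit-type finiteness genuinely controls each compact $K$, to be the step requiring the most care.
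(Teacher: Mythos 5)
Your proposal is correct and follows essentially the same route as the paper: the forward direction via Smirnov's Lemma~\ref{lem:Palais}, and the converse by establishing $K$-finite structure (FTC from the hypothesis, plus finitely many $K$-orbit types obtained by conjugating a maximal torus $T'$ of $K$ into $T$ and applying transtoral induction), then Lemma~\ref{lem:Jaworowski} followed by Abels--Antonyan induction (Lemma~\ref{lem:induction}). The only cosmetic differences are that the paper invokes Lemma~\ref{lem:Bredon} directly rather than Corollary~\ref{cor:transtoral} (compact support being automatic for subgroups of compact $K$) and spells out that the restriction $\cO_T(X) \longra \cO_{T'}(X)$ is surjective, which is the bookkeeping step you flagged.
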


\begin{proof}
Let $K$ be any compact subgroup of $G$.
Let $T'$ be a maximal torus in $K$.
Since the maximal tori of $G$ are unique up to conjugacy (\ref{rem:MaximalTorus}), we may assume that $T' \leqslant T$.
Since $T'$ is central in $T$, there exists a restriction $\cO_T(X) \longra \cO_{T'}(X)$, which is automatically surjective.
Hence $\cO_{T'}(X)$ is finite (compare \cite[1.7.30]{Palais_book}).
So $\cO_K(X)$ is finite (\ref{lem:Bredon}).
Since $X$ is $K$-FTC, we conclude $X$ has $K$-finite structure.

First, suppose that $X \in G\text{-ANR}(G\text{-}\cM)$.
Then $X^K \in \text{ANR}(\cM)$ by Lemma~\ref{lem:Palais}.

Conversely, suppose that $X^H \in \text{ANR}(\cM)$ for any compact $H<G$.
Since $X$ has $K$-finite structure and $X^H \in \text{ANR}(\cM)$ for all closed $H\leqslant K$, $X \in K\text{-ANE}(K\text{-}\cM)$ by Lemma~\ref{lem:Jaworowski}.
Since this is true for all $K$, $X \in G\text{-ANR}(G\text{-}\cM)$ by Lemma~\ref{lem:induction}.
\end{proof}

Recently, Lemma~\ref{lem:Jaworowski} is generalized to include linear Lie groups \cite[6.1]{AAMV}.

\begin{cor}[Antonyan--Antonyan--Mata-Romero--Vargas-Betancourt]\label{cor:AAMV_criterion}
Let $L$ be a linear Lie group.
Let $X$ be any Palais $L$-metrizable space having finite structure.
Then $X$ is an $L$-ANE for the class of Palais $L$-metrizable spaces if and only if, for each compact subgroup $K$ of $L$, $X^K$ is an ANR for the class of metrizable spaces.
\end{cor}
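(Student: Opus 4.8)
The plan is to derive this as the special case $G := L$ of Theorem~\ref{thm:criterion}, observing that the linearity of $L$ is not actually needed. First I would reconcile the two formulations of the statement. The fixed-set conditions on the two sides are already identical, so it suffices to match ``$L$-ANE for $L\text{-}\cM$'' with ``$L$-ANR for $L\text{-}\cM$.'' Since $X$ is assumed to lie in $L\text{-}\cM$, these coincide: one implication is the containment $L\text{-ANE}(L\text{-}\cM)\cap(L\text{-}\cM)\subseteq L\text{-ANR}(L\text{-}\cM)$ extracted from the proof of Proposition~\ref{prop:hull}, while for the reverse I would embed $X$ as a closed $L$-subset of a convex invariant subset $C$ of an $L$-Banach space, which is an absolute $L$-extensor for $L\text{-}\cM$ as noted in the proof of Proposition~\ref{prop:hull}; then any $L$-map into $X\subset C$ extends over $C$ and, via an $L$-retraction of a neighborhood of $X$ in $C$, extends into $X$.

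It then remains to extract the two hypotheses of Theorem~\ref{thm:criterion} from the assumption that $X$ has finite structure (Definition~\ref{defn:Jaworowski}). For the torus hypothesis, finite structure supplies compact isotropy groups $H_1,\dots,H_m$ representing the finitely many $L$-orbit types, and every $T$-isotropy group has the form $T\cap gH_ig\inv$. I would show the collection $\{\,T\cap gH_ig\inv \mid g\in L,\ 1\leqslant i\leqslant m\,\}$ is finite by passing, through Mostow's theorem (Remark~\ref{rem:Mostow}), into a maximal compact subgroup $Q\geqslant T$ and reducing to the compact-group situation, where it amounts to the finiteness of $T$-orbit types of the torus $T$ acting on the compact homogeneous spaces $Q/H_i$ (the finiteness of orbit types for a compact group acting on a compact manifold, as in the proof of Lemma~\ref{lem:Bredon}). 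This makes $\cO_T(X)$ finite.

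For the remaining hypothesis, that $X$ is $K$-FTC (Definition~\ref{defn:FTC}) for every compact subgroup $K<L$, I would restrict the $L$-action to $K$. Over a finite trivializing cover $\{U_j\}$ of an $L$-stratum, finite structure gives $L$-charts $X_{(H)}|U_j \cong U_j\x L/H$; upon restricting to $K$ the $K$-strata of $X_{(H)}$ become products of the $U_j$ with the $K$-orbit-type strata of $L/H$, and the induced quotient maps become products of $\id_{U_j}$ with the $K$-bundle structure on the corresponding stratum of the fiber $L/H$. Assembling these and applying Theorem~\ref{thm:criterion} then yields the equivalence. The hard part, I expect, is exactly this restriction step together with the orbit-type reduction above: one must certify that refining each $L$-stratum into its $K$-strata preserves both the fiber-bundle structure \emph{and} the finiteness of the trivializing covers --- equivalently, that the finite-trivializing-cover property of the homogeneous fibers $L/H$, viewed as $K$-spaces, is inherited so that the total chart count on every $K$-stratum stays finite. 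Managing this bundle bookkeeping, and the analogous compact-group finiteness for $\cO_T(X)$, is the technical heart of the argument.
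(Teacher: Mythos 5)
Your overall route --- feeding the hypotheses into Theorem~\ref{thm:criterion} --- is the paper's route, and your reconciliation of ``$L$-ANE for $L\text{-}\cM$'' with ``$L$-ANR for $L\text{-}\cM$'' is fine (the paper elides it since $X \in L\text{-}\cM$). But where the paper's proof simply invokes Lemma~\ref{lem:AAMV_restriction} (that is, \cite[6.4]{AAMV}) to pass from $L$-finite structure to both $T$-finite structure and $K$-finite structure, you attempt to reprove that restriction lemma from scratch, and that is where the gap lies. Your argument for the finiteness of $\cO_T(X)$ never actually uses the linearity of $L$, yet linearity is indispensable here: the Example at the end of \S5 exhibits a (non-linear) Lie group $G = T \rtimes_{\id} GL_2(\Z)$ and a homogeneous space $X = G/U_1$, trivially of finite structure, with a \emph{single} $G$-orbit type but infinitely many $T$-orbit types. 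Concretely, your proposed reduction fails because the subgroups $T \cap gH_ig\inv$ are indexed by $g$ ranging over all of $L$; conjugating $gH_ig\inv$ into a fixed maximal compact subgroup $Q$ simultaneously moves $T$ to another maximal torus, so you do not land in the compact situation ``$T$ acting on $Q/H_i$.'' (Also, Mostow's theorem as quoted in Remark~\ref{rem:Mostow} requires virtual connectedness, which a linear Lie group need not have.) Any correct proof of this step must exploit a faithful representation of $L$, which is exactly what \cite[6.4]{AAMV} does.

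The $K$-FTC step has a secondary problem. The $L$-strata $X_{(H)}$ are only locally closed in $X$, so for a $K$-orbit type $(J)$ the intersection $X_{(J)} \cap X_{(H)}$ need not be open in the $K$-stratum $X_{(J)}$; the sets $U_j \x (L/H)_{(J)}$ you produce are therefore not local trivializations of $X_{(J)} \longra X_{(J)}/K$ over \emph{open} subsets of $X_{(J)}/K$, which is what Definition~\ref{defn:FTC} demands. Moreover, bounding the total chart count requires finitely many $K$-orbit types on each fiber $L/H$, which is again the linearity-dependent claim above. The intended proof is one line: cite Lemma~\ref{lem:AAMV_restriction} to obtain $T$- and $K$-finite structure from $L$-finite structure, then apply Theorem~\ref{thm:criterion}.
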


The proof of this as a corollary shall appear below, after the following key lemma.
Linearity of $L$ forces keeping the hypothesis of $X$ under restriction \cite[6.4]{AAMV}.

\begin{lem}[Antonyan--Antonyan--Mata-Romero--Vargas-Betancourt]\label{lem:AAMV_restriction}
Let $L$ be a linear Lie group.
Let $X$ be a Palais $L$-metrizable space of finite structure.
For any compact subgroup $K$ of $L$, the restriction of the $K$-action on $X$ has finite structure.
\end{lem}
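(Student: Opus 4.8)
The plan is to verify separately the two halves of finite structure (\ref{defn:Jaworowski}) for the restricted $K$-action: finitely many $K$-orbit types, and the FTC bundle condition (\ref{defn:FTC}). Throughout I reduce to the fibers of the given $L$-bundles. By hypothesis $X$ has finitely many $L$-orbit types $(H_1),\dots,(H_n)$, with each $H_i$ compact since the Palais $L$-action is proper, and each stratum $X_{(H_i)}\to X_{(H_i)}/L$ is an $L/H_i$-bundle trivialized over a \emph{finite} open cover $\{U^i_\alpha\}$, so that $\pi\inv(U^i_\alpha)\iso U^i_\alpha\x(L/H_i)$ as $L$-spaces. Restricted to $K$, the group acts only through the $L/H_i$ factor; hence every $K$-equivariant question localizes to the $K$-space $L/H_i$, and it suffices to prove that each $L/H$ (with $H$ compact) has finite $K$-structure and then reassemble over the finitely many pairs $(i,\alpha)$.

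For the orbit-type count I would use linearity directly. Fix a closed embedding $L\leqslant\mathrm{GL}(n,\R)\subseteq\mathrm{M}_n(\R)$. The rule $(k,h)\cdot A:=kAh\inv$ is linear in $A$, so it defines a representation of the compact group $K\x H$ on $\mathrm{M}_n(\R)$; averaging a Euclidean inner product over $K\x H$ makes it orthogonal. An orthogonal representation of a compact Lie group has only finitely many orbit types (Bochner's linearization on the unit sphere together with induction on dimension \cite{Bochner}, \cite[Exercise~II:2]{Bredon_TG}). As $L$ is a $(K\x H)$-invariant submanifold of $\mathrm{M}_n(\R)$, it meets only finitely many of these orbit types. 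Finally, the first-coordinate projection carries the $(K\x H)$-isotropy $\{(ghg\inv,h)\mid h\in H\cap g\inv Kg\}$ at $g\in L$ isomorphically onto the $K$-isotropy $K\cap gHg\inv$ at $gH\in L/H$, and conjugation by $(k_0,h_0)$ projects to conjugation by $k_0$; so finitely many $(K\x H)$-orbit types on $L$ yield finitely many $K$-orbit types on $L/H$, hence on $X$.

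The harder half is FTC, and I expect it to be the genuine obstacle. Since $K$ is compact, Palais' slice theorem \cite[2.3.1]{Palais} applies to $L/H$ and shows each $K$-stratum $(L/H)_{(P)}\to(L/H)_{(P)}/K$ is a bona fide $K/P$-bundle; the remaining point is that it admits only \emph{finitely many} trivializations, i.e.\ that the double-coset space $K\backslash L/H$ is of finite type. Here linearity is again essential. I would first conjugate so that $K$ and $H$ lie in a maximal compact $M$, which is legitimate because every compact subgroup conjugates into $M$ (Cartan--Iwasawa--Mostow; see \ref{rem:Mostow}, \cite{Iwasawa}, \cite{Mostow_selfadjoint}) and right translation gives a $K$-homeomorphism $L/H\iso L/H'$ with $H'\leqslant M$. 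The Cartan decomposition $L\iso M\x\R^d$ then presents $K\backslash L/H$ as a finite-type stratified space; alternatively the closed $(K\x H)$-equivariant embedding $g\mapsto(g,g\inv)$ into $\mathrm{M}_n(\R)\x\mathrm{M}_n(\R)$ realizes $K\backslash L/H=L/(K\x H)$ as a closed subset of the finite-type orbit space of an orthogonal representation, confining each stratum quotient to finitely many invariant-polynomial charts. Either route should extract a finite trivializing cover $\{W_\beta\}$ of each $(L/H)_{(P)}/K$.

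To conclude I reassemble: over each $U^i_\alpha\x(L/H_i)$ the $K$-strata are $U^i_\alpha\x(L/H_i)_{(P)}$, trivializing over the opens $U^i_\alpha\x W_\beta$; ranging over the finitely many $i$, $\alpha$, $\beta$ yields finitely many trivializations of each $K$-stratum of $X$ (a single $K$-orbit type may be spread across several of the strata $X_{(H_i)}$, but only finitely many, so the count stays finite). Combined with the orbit-type bound this gives $K$-finite structure. The step I expect to be decisive is the finite-type claim for $K\backslash L/H$: it is exactly where linearity of $L$ enters, and it is what fails for a general Lie group, explaining why the hypothesis cannot be propagated under restriction without linearity (compare \cite[6.4]{AAMV} and the FTC/FUI route of Theorem~\ref{thm:criterion}).
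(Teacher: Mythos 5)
The paper does not actually prove Lemma~\ref{lem:AAMV_restriction}: it is imported verbatim from \cite[6.4]{AAMV} (see the sentence ``Linearity of $L$ forces keeping the hypothesis of $X$ under restriction''), so there is no internal argument to compare yours against; I can only judge the proposal on its own terms. Your orbit-type half is correct, and it locates the use of linearity exactly where it belongs: the action $(k,h)\cdot A = kAh\inv$ of $K\x H$ on $\mathrm{M}_n(\R)$ is an orthogonal representation of a compact Lie group after averaging, hence has finitely many orbit types \cite{Bochner} \cite[Exercise~II:2]{Bredon_TG}; $L$ is an invariant subset; and the isotropy $\{(ghg\inv,h)\mid h\in H\cap g\inv Kg\}$ at $g$ projects onto $K\cap gHg\inv$, compatibly with conjugation. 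This is consistent with the example at the end of Section~5, where the non-linear group $T\rtimes_{\id}GL_2(\Z)$ exhibits $G/U_1$ with infinitely many $T$-orbit types.

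The FTC half, however, has a genuine gap, and not where you placed it. First, the ``finite-type stratified space'' and ``invariant-polynomial charts'' discussion of $K\backslash L/H$ is not an argument; but that step is in fact easy and needs no linearity: $L/H$ is a finite-dimensional separable metric manifold, so $(L/H)/K$ is metrizable (hence hereditarily paracompact, Remark~\ref{rem:Stone}) and of finite covering dimension by \cite[1.7.32]{Palais_book}, and Proposition~\ref{prop:FTC} then gives that $L/H$ is $K$-FTC directly. The real obstacle is your reassembly. A single $K$-stratum $X_{(P),K}$ generally meets several $L$-strata $X_{(H_i)}$, and the pieces $X_{(P),K}\cap X_{(H_i)}$ are only locally closed, not open, in $X_{(P),K}$. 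Concretely, for $L=SO(3)$ acting on the cone $D^3$ of its action on $S^2$ and $K=SO(2)$, the $K$-stratum of type $(SO(2))$ is the axis, whose intersection with the fixed-point $L$-stratum is the non-open singleton $\{0\}$. Consequently the trivializations you produce over $U^i_\alpha\x W_\beta$ are over open subsets of $(X_{(H_i)})_{(P),K}/K$ but not of $X_{(P),K}/K$, so they are not local trivializations of the $K/P$-bundle $X_{(P),K}\longra X_{(P),K}/K$ in the sense of Definitions~\ref{defn:Jaworowski} and \ref{defn:FTC}. Passing from finitely many trivializations over finitely many locally closed pieces of the base to finitely many trivializations over open sets is not automatic (one cannot invoke Theorem~\ref{thm:classify} either, since $X_{(P),K}/K$ may be infinite-dimensional when the $U^i_\alpha$ are), and supplying that passage is the actual content of the FTC half of the lemma. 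Your parenthetical remark addresses only the finiteness of the count, not this openness problem.
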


\begin{proof}[Proof of Corollary~\ref{cor:AAMV_criterion}]
Let $K$ be any compact subgroup of $L$.
Since $X$ is assumed to have $L$-finite structure, by Lemma~\ref{lem:AAMV_restriction}, it has both $T$-finite structure and $K$-finite structure.
Therefore $X$ has only finitely many $T$-orbit types and is $K$-FTC.
Consequently, the $L$-ANE criterion follows immediately from Theorem~\ref{thm:criterion}.
\end{proof}

Now, we turn to relating FTC to finite covering dimension.
The next two results are folklore recorded by G~Bredon in the case of $P$ being a finite simplicial complex.
In our generalization, a \textbf{metric polytope} is a simplicial complex with $L^1$-metric topology; it is \textbf{full} means that it contains the simplex spanned by any finite subset of vertices.
Observe that if $P$ is locally finite then it has no infinite full subpolytope; note a counterexample to the converse is the metric cone on the simplicial real line.

\begin{lem}[Bredon]\label{lem:polytope}
Let $A$ a closed subset of a paracompact Hausdorff space $X$.
Let $P$ be a metric polytope\footnote{For locally finite complexes, the CW topology and the euclidean-metric topology are equal.} with only countably many vertices and no infinite full subpolytope.\footnote{The homotopy type of a countable CW complex contains such a polytope $P$ \cite[Thm~13]{Whitehead1}.}
Suppose $X$ has covering dimension $\leqslant n$ and $P$ is $(n-1)$-connected for some $n \in \N$.
Any continuous function $A \longra P$ extends to a continuous $X \longra P$.
\end{lem}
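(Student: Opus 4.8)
The plan is to treat this as a dimension-theoretic extension problem and dispatch it by obstruction theory, the point being that the $(n-1)$-connectivity of $P$ annihilates every obstruction in degrees $\leqslant n$ while the hypothesis $\dim X \leqslant n$ forbids obstructions in higher degrees. Concretely, an extension of $f$ over $X$ is built cell-by-cell over a simplicial model of the pair $(X,A)$, and the obstruction to passing from the $(k-1)$-skeleton to the $k$-skeleton lives in a cohomology group with coefficients in $\pi_{k-1}(P)$; since $P$ is $(n-1)$-connected we have $\pi_{k-1}(P)=0$ for all $k \leqslant n$, so each obstruction vanishes, and covering dimension $\leqslant n$ means no cells of dimension $>n$ occur. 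The work is to make ``simplicial model'' and ``obstruction'' rigorous when $X$ is merely paracompact rather than a polyhedron.

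First I would record the point-set consequences of the hypotheses on $P$: countably many vertices together with the absence of an infinite full subpolytope make the $L^1$-metric polytope $P$ a separable metric ANR whose compact subsets are confined to finite subpolytopes, so that any map built into $P$ is automatically continuous for the metric (not merely weak) topology provided its local images stay in finite subcomplexes. Being a metric ANR, $P$ is homotopy equivalent to a CW complex $Q$, which inherits $(n-1)$-connectivity, and $(X,A)$ satisfies the homotopy extension property into the ANR $P$ because $A$ is closed in the binormal space $X$ (Borsuk). Thus it suffices to extend the composite $A \longra P \longra Q$ over $X$ and then correct the restriction on $A$ back to $f$ by a homotopy, via the homotopy extension property.

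The extension into $Q$ I would realize through nerves. Since $\dim X \leqslant n$, any locally finite open cover of $X$ admits a locally finite open refinement $\cU$ of order $\leqslant n+1$; its nerve $N$ is a simplicial complex with $\dim N \leqslant n$, and a subordinate partition of unity gives the canonical map $\kappa\colon X \longra N$. Over $N$ I would then build a map $\psi\colon N \longra Q$ by induction over skeleta, extending across each $k$-simplex with $k \leqslant n$ because its attaching sphere maps nullhomotopically into the $(n-1)$-connected $Q$; composing, $\psi \circ \kappa$ gives a map $X \longra Q$. The main obstacle, and the step that genuinely forces one to use the dimension and connectivity hypotheses jointly rather than separately, is coordinating this nerve construction with the prescribed boundary behavior on the closed set $A$: since $X$ need not be metrizable, $P$ need not be an absolute neighborhood extensor for $X$, so one cannot simply extend $f$ to a neighborhood and fill in. Instead the cover $\cU$ must be taken adapted to $f$ near $A$ so that $\psi \circ \kappa$ agrees with $A \longra Q$ up to a controlled homotopy, and keeping that homotopy within finite subpolytopes — hence metric-continuous — is exactly where the two finiteness hypotheses on $P$ are indispensable.
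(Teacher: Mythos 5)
Your overall obstruction-theoretic picture (nerve of an order-$(n{+}1)$ cover, skeletal extension killed by $(n{-}1)$-connectivity) is the same engine that drives Bredon's argument, which the paper follows verbatim from \cite[II:9.1]{Bredon_TG}. But your proposal founders on its own central premise. You assert that ``since $X$ need not be metrizable, $P$ need not be an absolute neighborhood extensor for $X$, so one cannot simply extend $f$ to a neighborhood and fill in.'' This is exactly backwards relative to the intended proof: the two finiteness hypotheses on $P$ (countably many vertices, no infinite full subpolytope) are imposed \emph{precisely} so that the metric polytope $P$ is an ANE for the class $T_4$ \cite[III:11.7d]{Hu}, and a paracompact Hausdorff $X$ is normal (Dieudonn\'e). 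Hence $f\colon A \longra P$ \emph{does} extend over an open neighborhood $U$ of $A$, and Bredon's proof then refines $\{U, X-A\}$ to a locally finite cover of order $\leqslant n+1$ whose nerve is mapped into $P$ by skeletal induction, arranged to agree with the neighborhood extension (hence with $f$ itself on $A$). You instead interpret those hypotheses as mere continuity bookkeeping (``maps landing in finite subcomplexes are metric-continuous''), which misses their actual role.

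Having discarded the neighborhood extension, you are left with the genuinely hard step --- making $\psi\circ\kappa$ agree with $f$ on $A$ --- and this is exactly where your proposal has a gap: ``the cover $\cU$ must be taken adapted to $f$ near $A$ so that $\psi\circ\kappa$ agrees with $A\longra Q$ up to a controlled homotopy'' is a restatement of the relative extension problem, not a solution to it; no mechanism is given for producing such an adapted cover without first extending $f$ off $A$. The proposal is also internally inconsistent: the homotopy-extension step you invoke to correct the restriction on $A$ back to $f$ (Borsuk, for the binormal space $X$) itself requires $P$ to be an ANE for the class containing $X\times[0,1]$, i.e.\ the very ANE($T_4$) property you disclaim. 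Finally, the detour through a CW complex $Q$ homotopy equivalent to $P$ is unnecessary and creates a new unaddressed problem (returning from $Q$ to $P$ while keeping exact agreement with $f$ on $A$, not merely agreement up to homotopy). The fix is simply to restore the first step: cite $P\in\mathrm{ANE}(T_4)$, extend $f$ over a neighborhood, and then your nerve argument goes through as in \cite[II:9.1]{Bredon_TG}.
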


\begin{proof}
Do verbatim with \cite[II:9.1]{Bredon_TG}, using $P \in \text{ANE}(T_4)$ \cite[III:11.7d]{Hu}.
\end{proof}

\begin{cor}[Bredon]\label{cor:polytope}
Let $n \in \N$.
Let $A$ be a closed subset of a paracompact Hausdorff space $B$ with $\dim(B) \leqslant n$.
Let $F$ be an $(n-1)$-connected, countable, metric polytope with no infinite full subpolytope.
Let $F \longra E \longra B$ be a fiber bundle.
Any continuous section $A \longra E$ extends to a continuous section $B \longra E$.
\end{cor}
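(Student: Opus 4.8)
The plan is to reduce the section-extension problem for the fiber bundle $F \longra E \longra B$ to the function-extension problem already solved in Lemma~\ref{lem:polytope}. The essential idea is that, locally, a fiber bundle with fiber $F$ looks like a product $U \times F$, so a local section over $U$ is just a continuous function $U \longra F$; globally, one assembles such local data by a gluing argument over the paracompact base, exactly as is done in the classical obstruction-theoretic treatment of sections via a relative extension up the skeleta. Since $F$ satisfies all the hypotheses of Lemma~\ref{lem:polytope} (it is $(n-1)$-connected, countable, a metric polytope with no infinite full subpolytope), and $\dim(B) \leqslant n$ with $B$ paracompact Hausdorff, the fiber extension lemma will do all the local work.

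First I would choose a locally finite open cover $\{U_\alpha\}$ of $B$ over which the bundle $E$ is trivial, using paracompactness of $B$; over each $U_\alpha$ we have a trivialization $E|_{U_\alpha} \iso U_\alpha \times F$, and a section over $U_\alpha$ corresponds to a continuous map $U_\alpha \longra F$. The closed subset $A \subset B$ carries a given section $s : A \longra E$, which under each trivialization restricts to a continuous map $A \cap U_\alpha \longra F$. Next I would set up the standard inductive gluing, in the spirit of the Milnor-type and Dieudonn\'e shrinking arguments invoked in the proof of Lemma~\ref{lem:JamesSegal2}: shrink the cover to a closed cover, order the index set, and extend the section one open set at a time. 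At the inductive stage, one has a section already defined on the closed set $A \cup (\text{previously covered region})$ and wishes to extend it across the next $\ol{U}_\alpha$; intersecting with the trivialization, this is precisely the problem of extending a continuous function from a closed subset of the paracompact space $\ol{U}_\alpha$ (which inherits $\dim \leqslant n$) into $F$, which Lemma~\ref{lem:polytope} guarantees because $F$ is $(n-1)$-connected with the requisite polytope structure.

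The main obstacle I anticipate is the compatibility of the locally constructed maps across overlaps: a section is genuinely a global object, and the transition functions of the bundle act on the fiber $F$, so naively extending $U_\alpha \longra F$ independently on each chart will not patch to a section unless one extends \emph{sections} rather than \emph{functions} at each stage. The clean way around this is to keep the argument intrinsic to $E$: at each inductive step one extends the already-defined partial section over the closed set to a section over the larger open region, and only invokes Lemma~\ref{lem:polytope} \emph{within a single trivializing chart}, where the fiber coordinate is canonically $F$. Because the extension within one chart is of a map into $F$ from a closed subset, and the transition to the next chart only happens on the overlap where the section is already fixed, the values automatically agree; the connectivity hypothesis $F$ being $(n-1)$-connected is exactly what lets the extension ignore the ambiguity in how the new region meets the old one. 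I would therefore carry out the induction at the level of sections, appealing to the product structure only to translate each single-chart extension into an instance of Lemma~\ref{lem:polytope}, and rely on the locally finite shrinking so that the union of the extensions is continuous and defined on all of $B$.
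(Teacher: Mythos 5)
Your proposal is correct and follows essentially the same route as the paper: the paper's proof simply runs Bredon's classical argument \cite[II:9.2]{Bredon_TG} verbatim (locally finite trivializing cover, shrinking, transfinite induction extending the partial section one chart at a time), substituting Lemma~\ref{lem:polytope} for Bredon's finite-complex extension lemma \cite[II:9.1]{Bredon_TG} at each single-chart step, exactly as you describe.
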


\begin{proof}
Do verbatim with \cite[II:9.2]{Bredon_TG}, using Lemma~\ref{lem:polytope} for \cite[II:9.1]{Bredon_TG}.
\end{proof}

We arrive at a classification theorem that becomes Bredon's \cite[II:9.3, 9.7i]{Bredon_TG} when $G$ is a compact Lie group and his isotropy groups are limited to be trivial.
In turn, Bredon's theorem generalized Palais' \cite[2.6.2]{Palais_book} from requiring $B$ to be locally compact, second countable, and Hausdorff \cite[2.1.1]{Palais_book} to $B$ paracompact.

\begin{thm}\label{thm:classify}
Let $G$ be any Lie group.
Recall Milnor's join $E_n G := G^{\circ (n+1)}$ with coarse topology \cite{Milnor_univ2}.
Let $G \longra E \xrightarrow{~\pi~} B$ be a principal $G$-bundle with $B$ paracompact Hausdorff and of covering dimension $\leqslant n$.
There are a map $f: B \longra B_n G := E_n G /G$ and a $G$-homeomorphism $E \longra f^*(E_n G)$ over the identity on $B$.
\end{thm}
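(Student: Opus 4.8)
The plan is to reduce the entire statement to the construction of a single $G$-map $\wh f \colon E \longra E_n G$, and then to produce that map as a global section of an associated bundle with highly connected fiber. First I would observe that $f$ and the $G$-homeomorphism are formal consequences of such a $\wh f$. Indeed, $\wh f$ descends to $f := \wh f / G \colon B = E/G \longra E_n G / G = B_n G$, and then $e \longmapsto (\pi(e), \wh f(e))$ defines a $G$-map $E \longra f^*(E_n G)$ over $\id_B$. Since both $E$ and $f^*(E_n G)$ are principal $G$-bundles over $B$, a $G$-map between them covering $\id_B$ is fiberwise a map of $G$-torsors, hence bijective, and is a homeomorphism by local triviality; this is the principal-bundle fact underlying \cite[II:2.6]{Bredon_TG}. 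Thus it remains only to build $\wh f$.

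To build $\wh f$, I would invoke the section correspondence of Proposition~\ref{prop:tomDieck} with $X = E$ and $Y = E_n G$. Both are Tikhonov spaces carrying free Palais $G$-actions, each being the total space of a principal $G$-bundle, so the action is proper. Because both actions are free, the isotropy condition $G_e \leqslant G_{e'}$ defining $I_G(E, E_n G)$ is vacuous, whence $I_G(E, E_n G) = E \x E_n G$ and $M_G(E, E_n G) = E \x_G E_n G$, the total space of the associated bundle $\pi \colon E \x_G E_n G \longra B$ with fiber $E_n G$. Proposition~\ref{prop:tomDieck} then identifies $\Map(E, E_n G)^G$ with $\Sec(\pi)$, so it suffices to produce a global continuous section of this associated bundle.

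The existence of a section I would extract from Corollary~\ref{cor:polytope}, applied with closed subset $A = \varnothing$ and base $B$ of covering dimension $\leqslant n$. The needed connectivity is Milnor's: the $(n+1)$-fold join $E_n G = G^{\circ(n+1)}$ of the nonempty space $G$ is $(n-1)$-connected. The main obstacle is that, for a noncompact Lie group $G$, the fiber $E_n G$ is a join of noncompact manifolds and is not literally a metric polytope as demanded by Corollary~\ref{cor:polytope}; this is exactly where Bredon's compact-Lie argument, which triangulates $G^{\circ(n+1)}$ into a finite complex, breaks down. To circumvent this I would note that $G$ is a second-countable, finite-dimensional, metrizable ANR manifold, so its finite join $E_n G$ is a second-countable, finite-dimensional, $(n-1)$-connected ANR; by Hanner together with the Whitehead realization cited in the footnotes of Lemma~\ref{lem:polytope} and Corollary~\ref{cor:polytope}, $E_n G$ has the homotopy type of a countable metric polytope with no infinite full subpolytope and, crucially, belongs to $\text{ANE}(T_4)$. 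Since the proof of Lemma~\ref{lem:polytope}, hence of Corollary~\ref{cor:polytope}, uses only that the fiber is $(n-1)$-connected and an absolute neighborhood extensor for $T_4$ spaces, the same extension argument yields a section of $\pi$ directly with $F = E_n G$; alternatively, one replaces $E_n G$ by a $G$-homotopy-equivalent free $G$-polytope $P$, applies Corollary~\ref{cor:polytope} to the fiber-homotopy-equivalent bundle $E \x_G P \longra B$, and transports the section back over $B$. Either way $\Sec(\pi) \neq \varnothing$, producing $\wh f$ and completing the proof.
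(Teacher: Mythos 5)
Your overall architecture agrees with the paper's: produce a section of the associated bundle $E_n G \to E \times_G E_n G \to B$ via the section--$G$-map correspondence, using the $(n-1)$-connectivity of $E_n G$ and $\dim B \leqslant n$, then upgrade the resulting bundle map to an isomorphism. The genuine gap is in how you handle the fiber. Your premise that for noncompact $G$ the join $E_n G$ ``is not literally a metric polytope'' and that ``this is exactly where Bredon's compact-Lie argument breaks down'' is mistaken, and it causes you to miss the one new idea the paper actually uses: by Whitehead's $C^1$-triangulation theorem, the Lie group $G$ \emph{is} a countable, locally finite metric polytope, and the coarse join of metric polytopes with no infinite full subpolytope is again a countable metric polytope with no infinite full subpolytope. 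Hence $E_n G$ satisfies the hypotheses of Corollary~\ref{cor:polytope} verbatim and no workaround is needed. Both of your substitutes have unjustified steps. For the first, you need $E_n G \in \text{ANE}(T_4)$, but you infer this from $E_n G$ being a finite-dimensional ANR and from its having the homotopy type of a nice polytope; neither implication holds. Being an ANR is extension for \emph{metrizable} domains, whereas Lemma~\ref{lem:polytope} must extend maps from closed subsets of arbitrary paracompact (merely $T_4$) spaces --- this is precisely why that lemma carries the ``countable, no infinite full subpolytope'' hypotheses and cites \cite[III:11.7d]{Hu} --- and membership in $\text{ANE}(T_4)$ is not a homotopy-type invariant. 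For the second, a ``$G$-homotopy-equivalent free $G$-polytope $P$'' with a $G$-map $P \longra E_n G$ is not delivered by the non-equivariant Hanner/Whitehead realization you cite; the natural way to manufacture such a $P$ is to triangulate $G$ and take simplicial joins, which just reproduces the paper's observation that $E_n G$ already is such a polytope.

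Two smaller points. Your endgame --- a bijective $G$-map of locally trivial principal bundles over $\id_B$ is a homeomorphism by local triviality --- is a legitimate and more standard alternative to the paper's route, which instead verifies that $E$ is Tikhonov and Palais (via Urysohn functions patched by a bump function pulled back from the normal base $B$) in order to invoke Lemma~\ref{lem:comparison}. But note that your appeal to Proposition~\ref{prop:tomDieck} needs exactly those same unverified point-set hypotheses on $E$ (Tikhonov, Palais); you assert them without proof, whereas the elementary principal-bundle correspondence \cite[II:2.6]{Bredon_TG}, which the paper uses, sidesteps the issue for the direction you actually need (section $\Rightarrow$ $G$-map).
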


Relatively further, if $\dim(B) < n$ then isomorphic bundles have homotopic maps.

\begin{proof}
Firstly, the $(n+1)$-fold coarse join $E_n G$ is $(n-1)$-connected \cite[2.3]{Milnor_univ2}.

Since $G$ is a manifold of class $C^1$, it admits a triangulation \cite[Theorem~7]{Whitehead_triangulability}.
Since $G$ is separable and locally compact, the triangulation is countable and locally finite.
Since the coarse (which is metrizable \cite[\S3.1]{Khan_classify}) join of two metric polytopes each with no infinite full subpolytope has this property, the countable metric polytope $E_n G$ has no infinite full subpolytope (which is false for only locally finite).

Then, by Corollary~\ref{cor:polytope}, the bundle $E_n G \longra E_n G \x_G E \longra B$ admits a section.
This section corresponds to a $G$-map $E \longra E_n G$ \cite[II:2.6]{Bredon_TG}, which induces a map $f: B \longra B_n G$, yielding a $G$-map $E \longra f^*(E_n G)$ inducing the identity on $B$.
In order to show this continuous bijection is open, it remains to apply Lemma~\ref{lem:comparison}.
%For compact $G$, the map is open by \cite[Exercise~I:10]{Bredon_TG}.

Since $G$ and $B$ are Hausdorff, so is $f^*(E_n G)$ by local triviality.
Since $G$ is locally compact and $B$ is Hausdorff, the $G$-action on $E$ is Palais~(\ref{defn:Palais}) by local triviality.
We now explicitly show that $E$ is Tikhonov.
Let $C \subset E$ be closed and $x \in E-C$.
There are a neighborhood $U$ of $\pi(x)$ in $B$ and a $G$-homeomorphism $\phi: \pi\inv(U) \longra G \x U$.
Since $B$ is regular \cite[1a]{Dieudonne}, there is a subneighborhood $V$ of $x$ in $B$ with $\ol{V} \subset U$.
Then, since $B$ is normal \cite[1b]{Dieudonne}, by Urysohn's lemma \cite[25]{Urysohn}, there exists a map $\beta: B \longra [0,1]$ with $\beta(\ol{V}) = \{1\}$ and $\beta(B-U) = \{0\}$.
Since $G$ and $U$ are Tikhonov, so is the product $G \times U$ \cite[1.5.8, 2.3.11]{Engelking2}.
Then there is a map $\alpha: G \times U \longra [0,1]$ with $\alpha(\phi(x))=1$ and $\alpha(\phi(C \cap \pi\inv U)) = \{0\}$.
So the map $\gamma := (\alpha\circ\phi) \cdot (\beta\circ\pi): E \longra [0,1]$ satisfies $\gamma(x)=1$ and $\gamma(C)=\{0\}$.
Thus $E$ is Tikhonov.
Therefore $E \longra f^*(E_n G)$ is a $G$-homeomorphism, by Lemma~\ref{lem:comparison}.
\end{proof}

\begin{rem}
For $G$ a compact Lie group, an $n$-classifying bundle exists for $B$ a finite polytope \cite[19.6]{Steenrod_book}.
Our theorem was not observed after the construction of $E_n(G)$ in  \cite{Milnor_univ2}, but Dold \cite[7.6]{Dold} implies it classifies over paracompact $B$ with $\dim(B)\leqslant n$ replaced by $B$ locally a neighborhood retract of euclidean $n$-space.
The relaxed case $n=\aleph_0$ \cite[7.5]{Dold} is easier to achieve \cite[II]{tomDieck_numerable} \cite[4.12.2]{Husemoller}.
\end{rem}

For compact Lie $G$, \cite[4]{Lashof} and \cite[1.3]{Jaworowski2} asserted a version of the following.

\begin{prop}\label{prop:FTC}
Let $G$ be an arbitrary Lie group.
Let $X$ be a Palais $G$-space with orbit space $X/G$ both of finite covering dimension and \textbf{hereditarily paracompact} (that is, all subspaces are paracompact).
Then $X$ satisfies the FTC property (\ref{defn:FTC}).\footnote{\emph{A fortiori,} the proof shows each $X_{(H)}$ has a trivializing cover by $1 + \dim(X/G)$ open sets.}
\end{prop}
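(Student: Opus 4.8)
The plan is to argue one orbit type at a time and, for each, to produce a trivializing open cover of cardinality exactly $1+\dim(X/G)$; by Definition~\ref{defn:FTC} this is all that FTC requires, since FTC drops the ``finitely many orbit types'' clause of finite structure (\ref{defn:Jaworowski}). So fix an orbit type $(H)\in\cO_G(X)$. Because the $G$-action is Palais (hence proper) and $G$ is Lie, every isotropy group is compact; thus $H$ is a compact subgroup and the fiber $G/H$ is a manifold. First I would invoke Palais' slice theorem \cite[2.3.1]{Palais}: every point of $X_{(H)}$ lies in a tube $G\times_H S$, and intersecting such a tube with the stratum and passing to orbit spaces exhibits local triviality, so the orbit map $q_{(H)}\colon X_{(H)}\longra X_{(H)}/G$ is a locally trivial fiber bundle with fiber $G/H$ and structure group $N_G(H)/H$ (cf.\ \cite[II:5]{Bredon_TG}).

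Next I would control the base. The orbit space $X/G$ is paracompact Hausdorff of finite covering dimension $n:=\dim(X/G)$, and $X_{(H)}/G$ is a subspace of it. Since $X/G$ is \emph{hereditarily} paracompact, it is totally normal, and covering dimension is monotone on subspaces of totally normal spaces (Dowker; see \cite{Engelking2}); hence $X_{(H)}/G$ is itself paracompact Hausdorff with $\dim\bigl(X_{(H)}/G\bigr)\leqslant n$. It is precisely here, and only here, that the hereditary hypothesis is consumed: for a general paracompact Hausdorff orbit space the subspace bound on dimension can fail.

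It then remains to establish the purely bundle-theoretic fact that a fiber bundle over a paracompact Hausdorff base $B$ with $\dim B\leqslant n$ admits a trivializing open cover by $n+1$ sets. Starting from any open cover of $B$ by trivializing sets, since $\dim B\leqslant n$ and $B$ is paracompact I would refine it to a locally finite trivializing open cover $\{U_\alpha\}$ of order $\leqslant n+1$ and choose a subordinate partition of unity $\{\phi_\alpha\}$. For each nonempty finite index set $S$ with $\card(S)=k+1\leqslant n+1$, set
\[
W_S ~:=~ \bigl\{\, b\in B ~\big|~ \min_{\alpha\in S}\phi_\alpha(b) \,>\, \max_{\beta\notin S}\phi_\beta(b) \,\bigr\}.
\]
The sets $W_S$ with $\card(S)=k+1$ are open and pairwise disjoint, and each lies in some $U_\alpha$ with $\alpha\in S$, so the bundle is trivial over $W_S$. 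Hence it is trivial over each of the $n+1$ open sets $V_k:=\bigcup_{\card(S)=k+1}W_S$ (a topological disjoint union of trivial pieces), and $V_0,\dots,V_n$ cover $B$: at each point $b$ the finite nonempty set $S$ of indices attaining $\max_\alpha\phi_\alpha(b)$ has $\card(S)\leqslant n+1$ and places $b$ in the corresponding $W_S$. Applying this with $B=X_{(H)}/G$ yields a trivializing cover of $q_{(H)}$ by $1+\dim(X/G)$ open sets, which is exactly the FTC condition, including the sharper count asserted in the footnote.

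The main obstacle is the middle paragraph's point-set step rather than the geometry: upgrading plain paracompactness to hereditary paracompactness is what licenses the subspace dimension bound $\dim(X_{(H)}/G)\leqslant\dim(X/G)$, as covering dimension is not monotone on arbitrary subspaces of general paracompact Hausdorff spaces. A secondary care-point is confirming that the slice decomposition globalizes to a genuine bundle structure on the merely locally closed stratum $X_{(H)}$ for a \emph{noncompact} Lie group $G$; here the properness (Palais) of the action, and the consequent compactness of $H$, is essential, while the remaining reduction to $n+1$ trivializing sets is the standard Milnor--Dold coloring device and is routine.
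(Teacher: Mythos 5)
Your argument is correct, and it agrees with the paper's proof in its first two steps: Palais' slice theorem \cite[2.3.1]{Palais} supplies the local triviality of the stratum over its orbit space, and hereditary paracompactness of $X/G$ (total normality, hence Dowker's monotonicity \cite[2.8]{Dowker4}) is consumed exactly where you say it is, to get $\dim$ of the stratum's orbit space bounded by $\dim(X/G)$. Where you genuinely diverge is the final reduction to $1+\dim(X/G)$ trivializing sets. The paper phrases the stratum as the principal $W_GH$-bundle $X_H \longra X_H/W_GH$ (with $W_GH := N_GH/H$, a Lie group by Cartan's theorem) and then invokes Theorem~\ref{thm:classify}: the bundle is pulled back from Milnor's $n$-classifying space $E_n(W_GH)$, which Milnor covers by $n+1$ explicit local trivializations \cite[Proof~3.1]{Milnor_univ2}. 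You instead run the partition-of-unity ``coloring'' device directly on a locally finite trivializing cover of order $\leqslant n+1$ of $X_{(H)}/G$, collecting the pairwise-disjoint sets $W_S$ with $\card(S)=k+1$ into $n+1$ trivializing opens $V_0,\dots,V_n$; your verification that the $W_S$ are open, disjoint for fixed cardinality, contained in trivializing sets, and cover $B$ is sound (the one implicit ingredient is Morita's characterization that a paracompact space of covering dimension $\leqslant n$ admits locally finite open refinements of order $\leqslant n+1$, which is standard). Your route is more elementary and self-contained --- it works for an arbitrary fiber bundle over a finite-dimensional paracompact base and bypasses the classifying-space machinery entirely (triangulability of $G$, connectivity of the join, Bredon's section-extension, and Lemma~\ref{lem:comparison}) --- whereas the paper's route reuses Theorem~\ref{thm:classify}, which it proves anyway as a result of independent interest. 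Both deliver the sharper count of $1+\dim(X/G)$ asserted in the footnote.
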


\begin{proof}
Let $H$ be a compact subgroup of $G$.
By Cartan's closed-subgroup theorem \cite[27]{CartanE}, both $H$ and its normalizer $N_G H$ are Lie.
Consider the Weyl group $W_G H := N_G H / H$.
Note $X_{(H)} = G/H \x_{W_G H} X_H$ as $G$-spaces.
Since the induced action of the Lie group $W_G H$ on $X_H$ is Palais \cite[1.3.1]{Palais}, $X_H$ is covered by $H$-tubes \cite[2.3.1]{Palais}.
Therefore $X_H$ is the total space of a principal $W_G H$-bundle.

Since implicitly $X$ is Tikhonov (\ref{defn:Palais}), so is $X/G$ \cite[1.2.8]{Palais}.
Since $X/G$ is hereditarily paracompact hence totally normal, by Dowker's monotonicity of dimension \cite[2.8]{Dowker4}, $\dim(X_H/W_G H) \leqslant \dim(X/G)$.
Also, Milnor covers the principal bundle $E_n(W_G H)$ with $n+1$ explicit local trivializations \cite[Proof~3.1]{Milnor_univ2}.
Then, since the subspace $X_H/W_G H$ of $X/G$ is paracompact Hausdorff, Theorem~\ref{thm:classify} implies that $X_H$ has a finite cover by local trivializations.
Therefore $X$ is FTC.
\end{proof}

\begin{rem}[Stone]\label{rem:Stone}
Any metrizable space is hereditarily paracompact \cite{Stone1}.
\end{rem}

Here is a more practical form of Theorem~\ref{thm:criterion} that is devoid of mention of FTC.
For $G$ compact, Jaworowski's $G$-ENR theorem \cite{Jaworowski1} has $X$'s hypotheses below.

\begin{thm}\label{thm:Jaworowski}
Let $G$ be an arbitrary Lie group.
Let $X$ be a Palais $G$-metrizable space.
Suppose $X$ is separable, locally compact, and of finite covering dimension.
Let $T$ be a maximal torus in $G$.
Assume $\cO_T(C)$ is finite for each compact $C \subset X$.
Then $X$ is $G$-ANR for the class $G\text{-}\cM$ of Palais $G$-metrizable spaces if and only if, for each compact subgroup $K$ of $G$, the $K$-skeleton $X^K$ is an ANR for the class $\cM$.
\end{thm}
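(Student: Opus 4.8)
The plan is to establish the two implications separately: the forward one by a direct appeal to inheritance, and the backward one by localizing to relatively compact invariant pieces of genuine finite structure. For the forward implication, assume $X$ is a $G$-ANR for $G\text{-}\cM$. As $G$ is Lie it is metrizable, and every compact subgroup $K<G$ is closed, so Smirnov's inheritance theorem (Lemma~\ref{lem:Palais}) immediately yields $X^K\in\text{ANR}(\cM)$. This direction uses none of the hypotheses of separability, local compactness, finite dimension, or orbit-type finiteness.

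For the backward implication I would first translate the concrete point-set hypotheses into the structural input demanded by Lemma~\ref{lem:Jaworowski}. Fix a compact subgroup $K<G$. Since $X$ is $G$-metrizable it is $K$-metrizable, so $X/K$ is metrizable (Lemma~\ref{lem:AntonyandeNeymet}), hence hereditarily paracompact (Remark~\ref{rem:Stone}), and $\dim(X/K)\leqslant\dim X<\infty$ because $K$ is compact. Proposition~\ref{prop:FTC} applied to the restricted $K$-action then shows $X$ is $K$-FTC. Next I would promote the hypothesis on compact sets to local finiteness of $K$-orbit types: choosing a maximal torus $T'$ of $K$ inside $T$ (possible since maximal tori are conjugate, \ref{rem:MaximalTorus}), the restriction $\cO_T(C)\to\cO_{T'}(C)$ along $T'\leqslant T$ is surjective, so $\cO_{T'}(C)$ is finite, whence Bredon's lemma (Lemma~\ref{lem:Bredon}) makes $\cO_K(C)$ finite for every compact $C\subset X$.

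With these in hand I would build the $K$-ANE property locally. Because $X$ is separable, locally compact and metrizable and $K$ is compact, $X$ has a countable cover by $K$-invariant open sets $W_i$ with compact closures. Each $\ol{W_i}$ is compact, so $\cO_K(W_i)$ is finite; together with the inherited $K$-FTC property this gives $W_i$ a $K$-finite structure. Moreover $W_i^H=X^H\cap W_i$ is open in the ANR $X^H$, hence itself an ANR, for every closed $H\leqslant K$. Lemma~\ref{lem:Jaworowski} therefore gives $W_i\in K\text{-ANE}(K\text{-}\cM)$, and since the $K$-ANE property is local on a $K$-metrizable space (the equivariant form of Hanner's theorem), we conclude $X\in K\text{-ANE}(K\text{-}\cM)$. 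Letting $K$ range over all compact subgroups and invoking the induction principle (Lemma~\ref{lem:induction}, reducing to maximal compact subgroups by Remark~\ref{rem:Abels} when they exist) gives $X\in G\text{-ANE}(G\text{-}\cM)$; as $X$ is $G$-metrizable, this upgrades to $X\in G\text{-ANR}(G\text{-}\cM)$.

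The main obstacle, and the reason Theorem~\ref{thm:criterion} cannot be quoted verbatim, is that finiteness of $\cO_T(C)$ on compacta does \emph{not} force global finiteness of $\cO_T(X)$: for example $X=\bigsqcup_{n\geqslant 1} U_1/(\Z/n)$, a countable topological sum of circles under the evident $U_1$-action, is separable, locally compact and one-dimensional with only finitely many orbit types on each compact set, yet realizes every isotropy $\Z/n$. Hence $X$ need not have finite structure globally, and the reduction must be carried out over the relatively compact pieces $W_i$, with the gluing supplied by the locality of the ANE property. Verifying this equivariant local-to-global principle is the delicate step; the remaining ingredients are bookkeeping over the lemmas already proved.
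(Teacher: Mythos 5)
Your proposal is correct and follows essentially the same route as the paper's proof: Smirnov's lemma for the forward direction, and for the converse a localization to precompact $K$-invariant open sets whose $K$-finite structure is extracted from Proposition~\ref{prop:FTC} together with the maximal-torus/Bredon argument, followed by Jaworowski's criterion (Lemma~\ref{lem:Jaworowski}) on each piece, an equivariant open-union gluing, and Abels' induction (Lemma~\ref{lem:induction}). The ``equivariant local-to-global principle'' you flag as the delicate step is exactly Antonyan's open-union theorem for $K$-ANEs, which the paper simply cites, so this is a citation rather than a gap.
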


\begin{proof}
Let $K$ be a compact subgroup of $G$.
In particular, $K$ is Lie \cite[27]{CartanE}.

First, suppose that $X \in G\text{-ANR}(G\text{-}\cM)$.
Then $X^K \in \text{ANR}(\cM)$ by Lemma~\ref{lem:Palais}.

Conversely, suppose that $X^H \in \text{ANR}(\cM)$ for each compact $H<G$.
Let $x \in X$.
Since $X$ is locally compact, there is a precompact open neighborhood $U_x$ of $x$ in $X$.
Since $\cO_T(\ol{U_x})$ is assumed to be finite, as in Proof~\ref{thm:criterion}, we have that $\cO_K(\ol{U_x})$ is finite.
Then $\cO_K(K U_x)$ is finite.
Since $X$ is separable metric, Palais shows $\dim(X/K) \leqslant \dim(X)$ \cite[1.7.32]{Palais_book}.
Since $X/K$ is metrizable hence perfectly normal (that is, $T_6$) \cite[26]{Cech}, by \v{C}ech's monotonicity of dimension \cite[28]{Cech} \cite[3.1.20]{Engelking}, note $\dim(KU_x/K) \leqslant \dim(X/K) < \infty$.
So $KU_x$ is $K$-FTC (\ref{prop:FTC}).

Hence $KU_x$ has $K$-finite structure.
Since $KU_x$ is open in $X$, by Hanner's global-to-local principle \cite[III:7.9]{Hu}, each closed $H \leqslant K$ has $(KU_x)^H = KU_x \cap X^H \in \text{ANR}(\cM)$.
Therefore $KU_x \in K\text{-ANE}(K\text{-}\cM)$ by Jaworowski's criterion (\ref{lem:Jaworowski}).
Then $X = \bigcup_{x \in X} KU_x \in K\text{-ANE}(K\text{-}\cM)$ by Antonyan's open-union theorem \cite[5.7]{Antonyan2} as $K\text{-}\cM \subset K\text{-}\cP$ \cite[2.3]{Khan_linearLie}.
Thus $X \in G\text{-ANE}(G\text{-}\cM)$ by Abels' induction (\ref{lem:induction}).
\end{proof}

\begin{exm}
Consider the holomorph Lie group $G = T \rtimes_{\id} GL_2(\Z)$ and $X = G / U_1$, where the circle group $U_1$ is a subgroup of the maximal torus $T = U_1 \x U_1$ as first factor.
Then $X$ has a single $G$-orbit type but countably infinitely many $T$-orbit types by restriction, namely $\cO_T(X) = \{ \R(a,b)/\Z^2 ~|~ a>0 \text{ and } b \text{ coprime integers} \}$.
However, on compact sets $\cO_T$ is finite: $X$ has locally finitely many $T$-orbit types.
\end{exm}

%------------------------------------------------------------------------------
\section{Finiteness of orbit types and of covering dimension}

In the first part of this section, we make an additional definition and a sequence of comparative observations that reflect more on concepts of the previous section.

\begin{defn}\label{defn:FUI}
Let $G$ be a topological group.
A Cartan $G$-space $X$ being \textbf{FUI} (\textbf{finite upper isotropies}) shall mean that $\bigcup \cO_G X$ has only finitely many conjugacy classes of upper bounds in the preordered set $(\cpt\,G, \preccurlyeq)$ of Definition~\ref{defn:partialorder}.
\end{defn}

\begin{exm}\label{exm:fot_FUI}
If $X$ has finitely many orbit types, then $X$ has the FUI property.
\end{exm}

\begin{exm}\label{exm:cpt_FUI}
Let $G$ be a locally compact Hausdorff group.
Let $X$ be a Palais $G$-space with $X$ Tikhonov and $X/G$ compact.
Any $G$-subset of $X$ is FUI using \ref{lem:approx_slice}.
\end{exm}

\begin{lem}\label{lem:FUI_cs}
Let $G$ be a topological group.
Let $X$ be a Cartan $G$-space satisfying the FUI property.
The union $\bigcup \cO_G(X)$ of orbit types is compactly supported (\ref{defn:compactlysupported}).
\end{lem}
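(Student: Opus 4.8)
The plan is to extract from the FUI hypothesis a finite family of compact subgroups that collectively dominate every isotropy group, and then to take their union as the desired compact support. Accordingly, I would first record that the action being Cartan forces each isotropy group $G_x$ to be compact: a point $x$ has a neighborhood $U$ with $\langle U,U\rangle_G$ precompact, and since $x \in U \cap gU$ whenever $gx=x$, the closed subgroup $G_x$ is contained in $\langle U,U\rangle_G$, hence compact. Thus $\bigcup \cO_G(X)$ is a subset of $\cpt(G)$, and it is closed under conjugation by construction, being a union of $G$-conjugacy classes of isotropy groups. Recalling Definition~\ref{defn:compactlysupported}, it then remains only to produce a single compact set $C \subseteq G$ into which every member of $\bigcup \cO_G(X)$ conjugates.

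Next I would invoke the FUI property (\ref{defn:FUI}) to choose compact subgroups $K_1, \ldots, K_n$ representing the finitely many conjugacy classes of upper bounds of $\bigcup \cO_G(X)$ in the preordered set $(\cpt(G), \preccurlyeq)$, arranged so that every isotropy group satisfies $G_x \preccurlyeq K_i$ for some $i$. Then $C := K_1 \cup \cdots \cup K_n$ is a finite union of compact subgroups, hence a compact subset of $G$. Given any $S \in \bigcup \cO_G(X)$, say $S$ is $G$-conjugate to $G_x$ with $G_x \preccurlyeq K_i$, the very meaning of $\preccurlyeq$ (\ref{defn:partialorder}) furnishes a conjugate of $G_x$, and therefore a conjugate of $S$, contained in $K_i \subseteq C$. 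Hence $S$ conjugates into $C$, so $\bigcup \cO_G(X)$ is compactly supported.

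The analysis here is minimal, so the only real obstacle is interpretive rather than computational: one must read the ``upper bounds'' of Definition~\ref{defn:FUI} as a \emph{cofinal} finite family of (maximal) isotropy classes, with every isotropy lying below some $K_i$, rather than as a single subgroup dominating the entire set. This cofinal reading is what Examples~\ref{exm:fot_FUI} and \ref{exm:cpt_FUI} actually supply; in the latter, the finitely many slice groups produced by the approximate-slice theorem (\ref{lem:approx_slice}) already dominate all isotropy. The decisive quantitative input is then simply that a \emph{finite} union of compact subgroups is compact, which is precisely what converts the finiteness built into FUI into an honest compact support. One should keep this finiteness visible in the write-up, since without it the conclusion fails: for $G = \bigoplus_{n} \Z/n$ acting on $\bigsqcup_F G/F$ there is no compact support despite each isotropy being a finite subgroup.
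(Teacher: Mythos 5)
Your proposal is correct and follows essentially the same route as the paper's (very terse) proof: choose representatives $K_1,\ldots,K_n$ of the finitely many conjugacy classes of upper bounds supplied by FUI, set $C:=K_1\cup\cdots\cup K_n$, and observe that each isotropy group satisfies $G_x \preccurlyeq K_i \subseteq C$. Your additional remarks --- that the Cartan condition forces each $G_x$ to be compact so that $\bigcup\cO_G(X)\subseteq\cpt(G)$, and that ``upper bounds'' must be read as a finite cofinal family --- are sound and consistent with how the paper itself uses the lemma.
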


\begin{proof}
Let $K_1, \ldots, K_n$ represent the upper bounds of $\bigcup \cO_G(X)$ in $(\cpt\,G,\preccurlyeq)$.
Note $C := \bigcup_{i=1}^n K_i$ is compact.
For all $x \in X$, then $G_x \preccurlyeq K_i \subseteq C$ for some $i$.
\end{proof}

Our contribution shall be to notice an equivalence with the maximal torus (\ref{rem:MaximalTorus}).

\begin{prop}\label{prop:linear_torus}
Let $L$ be any linear Lie group.
Let $T$ be a maximal torus in $L$.
Let $X$ be a Palais $L$-metrizable space satisfying FTC.
Then $X$ has only finitely many $L$-orbit types if and only if $X$ is FUI and has only finitely many $T$-orbit types.
\end{prop}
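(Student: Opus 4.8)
The plan is to prove the two implications separately, noting in advance that linearity of $L$ is needed only for the forward direction. Two preliminary observations organize everything. First, a maximal torus $T$ is compact, so it is a legitimate compact subgroup of $L$ to which the restriction results apply, and it is also a legitimate torus for transtoral induction. Second, since $T$ is abelian, no two distinct subgroups of $T$ are $T$-conjugate, so the $T$-orbit types $\cO_T(X)$ are in bijection with the bare collection $\{L_x \cap T ~|~ x \in X\}$ of $T$-isotropy subgroups; I would use this identification freely to translate ``finitely many $T$-orbit types'' into ``$\{S \cap T ~|~ S \in \cS\}$ finite.''

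For the forward implication, suppose $X$ has only finitely many $L$-orbit types. The FUI property is then immediate from Example~\ref{exm:fot_FUI}. Combined with the standing FTC hypothesis (\ref{defn:FTC}), finitely many orbit types upgrades to full $L$-finite structure in the sense of Definition~\ref{defn:Jaworowski}. Since $T$ is a compact subgroup of the \emph{linear} Lie group $L$, I would then invoke Lemma~\ref{lem:AAMV_restriction} to conclude that the restricted $T$-action again has finite structure; in particular $\cO_T(X)$ is finite. This is the single step that consumes linearity, and it must: the earlier holomorph $G = T \rtimes_{\id} GL_2(\Z)$ with $X = G/U_1$ is FUI (one $G$-orbit type) and, since $X/G = \pt$, is FTC by Proposition~\ref{prop:FTC}, yet it has infinitely many $T$-orbit types, so the forward implication genuinely fails once linearity is dropped.

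For the reverse implication, suppose $X$ is FUI with $\cO_T(X)$ finite. Because the Palais hypothesis makes $X$ a Cartan $L$-space, Lemma~\ref{lem:FUI_cs} yields that $\cS := \bigcup \cO_L(X) = \{L_x ~|~ x \in X\}$ is compactly supported in the sense of Definition~\ref{defn:compactlysupported}. Intersecting with $T$ gives $\{S \cap T ~|~ S \in \cS\} = \{L_x \cap T ~|~ x \in X\}$, which is finite by the identification above with $\cO_T(X)$. Transtoral induction (Corollary~\ref{cor:transtoral}) then forces $\cS$ to have only finitely many $L$-conjugacy classes, which is exactly the finiteness of $\cO_L(X)$. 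This direction uses neither linearity nor FTC, only the Cartan structure and the two finiteness inputs.

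The only substantive obstacle is the forward direction's dependence on Lemma~\ref{lem:AAMV_restriction}: there is no soft reason for finiteness of $L$-orbit types to descend to $T$, and it is precisely linearity, through the restriction-of-finite-structure lemma, that supplies it (the holomorph witnessing that nothing weaker will do). The remaining work is bookkeeping: confirming that $T$ is compact so that it serves simultaneously as a compact subgroup for Lemma~\ref{lem:AAMV_restriction} and as the torus in Corollary~\ref{cor:transtoral}, and confirming that a Palais action of a locally compact Hausdorff group is Cartan so that Lemma~\ref{lem:FUI_cs} is applicable.
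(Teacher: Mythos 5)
Your proof is correct and follows essentially the same route as the paper's: Example~\ref{exm:fot_FUI} together with the FTC hypothesis and Lemma~\ref{lem:AAMV_restriction} for the forward direction, and Lemma~\ref{lem:FUI_cs} followed by Corollary~\ref{cor:transtoral} for the converse. Your supplementary remarks (the abelian identification of $\cO_T(X)$ with $\{L_x \cap T ~|~ x \in X\}$, and the holomorph example witnessing that linearity is genuinely needed in the forward direction) are accurate but not part of the paper's argument.
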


\begin{proof}
On the one hand, suppose that $X$ has only finitely many $L$-orbit types.
Then $X$ is FUI by Example~\ref{exm:fot_FUI}.
Since $X$ is $L$-FTC, it has $L$-finite structure.
So $X$ has $T$-finite structure by Lemma~\ref{lem:AAMV_restriction}, hence it has only finitely many $T$-orbit types.

Conversely, suppose $X$ is FUI and has only finitely many $T$-orbit types.
By Lemma~\ref{lem:FUI_cs} then Corollary~\ref{cor:transtoral}, we find $X$ has only finitely many $L$-orbit types.
\end{proof}

In the second part of this section, we state a folklore result from dimension theory (see after \cite[\foreignlanguage{russian}{Проблема}~2]{Dranishnikov}), which shall be documented by F\,D~Ancel \cite{Ancel}.
For locally compact spaces, $\dim_\Z$ is $\Z$-\v{C}ech cohomological dimension in the sense of Cohen \cite[2.10]{Cohen}; for normal spaces, $\dim$ is \v{C}ech--Lebesgue covering dimension.

\begin{lem}[Kozlowski]\label{lem:KA}
Let $X \in \text{ANR}(\cM)$ be a locally compact space.
Suppose $\dim_\Z(X)=n<\infty$.
Then $X$ has finite covering dimension.
Moreover, $\dim(X)=n$.
\end{lem}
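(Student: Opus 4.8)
The plan is to split the statement into the one genuinely topological input, where membership in $\text{ANR}(\cM)$ is indispensable, and a classical equality that then runs on autopilot. Throughout, $X$ is metrizable and locally compact, so both the covering dimension $\dim$ and the integral cohomological dimension $\dim_\Z$ are monotone under closed subspaces and are recovered as the suprema of their values over the compact subsets of $X$ \cite{Cohen} \cite{Engelking}; I would record these reductions at the outset. The entire difficulty is concentrated in showing $\dim(X)<\infty$, since the \emph{equality} $\dim(X)=n$ follows formally once finiteness is in hand.

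The key step --- and the step I expect to be the real obstacle --- is purely a matter of cohomological dimension theory: a locally compact, locally contractible metrizable space of finite integral cohomological dimension has finite covering dimension. Here the ANR hypothesis enters exactly once, through the fact that any member of $\text{ANR}(\cM)$ is locally contractible (indeed locally $k$-connected for every $k$). That this local contractibility cannot be dispensed with is witnessed by Dranishnikov's continuum \cite{Dranishnikov}, a compactum with $\dim_\Z=3$ but $\dim=\infty$; the force of the present lemma is precisely to exclude such pathology for retracts. I would invoke this implication as a black box, citing the account to appear in \cite{Ancel}, since its proof lies wholly inside cohomological dimension theory and is independent of the equivariant machinery developed above. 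Applied to our $X$, it yields $\dim(X)<\infty$.

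It then remains to upgrade finiteness to the equality $\dim(X)=n$. For this I would appeal to Alexandroff's theorem: on a \emph{finite-dimensional} metrizable space the covering dimension and the integral cohomological dimension coincide \cite{Engelking}. Since $\dim$ and $\dim_\Z$ agree on every compact subset of $X$ once $X$ is known to be finite-dimensional, and since each is the supremum of its restrictions to compacta, I conclude $\dim(X)=\dim_\Z(X)=n$. Combined with the finiteness from the previous paragraph, this gives $\dim(X)=n<\infty$, as asserted.
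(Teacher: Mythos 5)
The paper does not actually prove this lemma: it records it as a folklore result from cohomological dimension theory, to be documented separately by Ancel, so your overall strategy of treating the finiteness step as a citation and deriving the equality $\dim(X)=\dim_\Z(X)$ afterwards from Alexandroff's theorem (via the reduction to compacta, which is legitimate for locally compact metrizable spaces by the countable closed sum theorem) is consistent with how the paper handles it. The equality half of your argument is fine.

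The genuine problem is your assertion that the $\text{ANR}(\cM)$ hypothesis ``enters exactly once, through the fact that any ANR is locally contractible,'' so that the black box you invoke is the statement ``locally compact, locally contractible, metrizable, $\dim_\Z<\infty$ $\Rightarrow$ $\dim<\infty$.'' That weakened statement is not a known theorem --- it is essentially the open problem recorded in the very source the paper points to (the discussion following \foreignlanguage{russian}{Проблема}~2 in \cite{Dranishnikov}). The proofs that exist (Edwards--Walsh resolutions producing a cell-like map from a finite-dimensional space onto $X$, followed by Kozlowski's theorem that a cell-like map onto an ANR is a fine homotopy equivalence / hereditary shape equivalence, which then cannot raise dimension to infinity) use the extensor property of $X$ in an essential way, not merely its local contractibility. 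Nor can you pass from ``locally contractible'' back to ``ANR'' via Borsuk's characterization, since that characterization requires finite-dimensionality --- precisely the conclusion being sought. So you should state and cite the black box with the ANR hypothesis intact; as written, your reduction replaces a known (if unpublished) theorem by an open problem.
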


Dranishnikov found $X \in \cM$ compact, $\dim_\Z(X)<\infty$, and $\dim(X)=\infty$ \cite{Dranishnikov1}.

%------------------------------------------------------------------------------
\section{Equivariant topological manifolds}

The following generalization of \cite[3.1]{Khan_linearLie} now includes nonlinear cases of $G$.
It is an application of Smith theory, new $G$-ANR theory, and $G$-overhomotopy theory.

\begin{thm}\label{thm:main}
Let $G$ be any Lie group.
Let $M$ be a second-countable $\Z$-cohomology manifold \cite[1.3]{Khan_linearLie} equipped with a Palais $G$-action such that any nonempty $K$-skeleton ($K$-fixed set) $M^K = \{ x \in M ~|~ \forall g \in K : gx = x \}$ is locally contractible.
Then $M$ is $G$-homotopy equivalent to a countable proper $G$-CW complex.
Further, a $G$-map $f: M \longra N$ between such $\Z$-cohomology manifolds is a $G$-homotopy equivalence if and only if each map $f^K: M^K \longra N^K$ is a homotopy equivalence.
\end{thm}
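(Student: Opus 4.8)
The plan is to derive both assertions from the $G$-ANR theory assembled above. For the first statement, I would reduce the construction of a countable proper $G$-CW model to the single claim that $M$ is a separable $G$-ANR for $G\text{-}\cM$: once that is known, the author's strengthening of Antonyan--Elfving \cite[Proof~3.1]{Khan_linearLie} promotes any separable $G$-ANR to the $G$-homotopy type of a countable proper $G$-CW complex. Thus everything rests on verifying the hypotheses of the practical criterion \ref{thm:Jaworowski} for $X = M$.

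Carrying this out, I would first record the point-set inputs. Being a second-countable $\Z$-cohomology manifold, $M$ is locally compact Hausdorff and hence (Urysohn) separable metrizable; its orbit space $M/G$ is second-countable (the orbit map is open) and Tikhonov \cite[1.2.8]{Palais}, so metrizable, whence $M$ is $G$-metrizable by Antonyan--deNeymet (\ref{lem:AntonyandeNeymet}), using that the Lie group $G$ is separable and $M$ locally separable. Taking $K = \{1\}$, the hypothesis makes $M$ itself locally contractible, so as a locally contractible $\Z$-cohomology manifold it lies in $\text{ANR}(\cM)$ (the nonequivariant input, cf.~\cite{Khan_linearLie}); Kozlowski's lemma (\ref{lem:KA}) then upgrades $\dim_\Z(M) < \infty$ to finite covering dimension $\dim(M) = \dim_\Z(M)$. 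Because the maximal torus $T$ (\ref{rem:MaximalTorus}) is a compact group acting on the finite-dimensional cohomology manifold $M$, Yang's theorem \cite{Yang} gives locally finitely many $T$-orbit types, so $\cO_T(C)$ is finite on each compact $C \subset M$. Finally, for each compact $K < G$, Smith theory \cite{Borel_seminar} identifies $M^K$ as a $\Z$-cohomology manifold, locally contractible by hypothesis, hence a member of $\text{ANR}(\cM)$. With all hypotheses met, \ref{thm:Jaworowski} yields $M \in G\text{-ANR}(G\text{-}\cM)$, and \cite{Khan_linearLie} completes the first statement.

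For the second statement, the forward implication is immediate: if $H \colon M \x I \longra N$ is a $G$-homotopy witnessing a $G$-homotopy inverse of $f$, then the fixed-point functor $(-)^K$ carries it to a homotopy on $M^K$, so each $f^K$ is a homotopy equivalence. For the converse I would invoke the rigidity corollary \ref{cor:JamesSegal}. Both $M$ and $N$ are metrizable with Palais action, hence members of $G\text{-}\cP^*$ by Stone's theorem (\ref{rem:Stone}), with metrizable orbit spaces. It remains to see that $M$ and $N$ are absolute neighborhood $G$-extensors for $G\text{-}\cP^*$; since $M/G$ is metrizable, hence hereditarily paracompact and finite-dimensional, $M$ satisfies the FTC property (\ref{prop:FTC}), and, together with the fixed-set condition $M^K \in \text{ANR}(\cM)$, the parameterized extension theory of James--Segal (\ref{lem:JamesSegal1}, \ref{thm:JamesSegal}, and the corrected gluing lemmas \ref{lem:JamesSegal2}, \ref{lem:JamesSegal3}) delivers the $G\text{-}\cP^*$ extension property rather than only the $G\text{-}\cM$ one. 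Feeding the hypothesis that each $f^K$ (in particular for compact $K$) is a homotopy equivalence into \ref{cor:JamesSegal} then shows $f$ is a $G$-homotopy equivalence.

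I expect the main obstacle to be the dimension-theoretic and point-set bookkeeping, in two places. First, there is an apparent circularity --- \ref{lem:KA} needs $M \in \text{ANR}(\cM)$ while \ref{thm:Jaworowski} needs finite covering dimension --- which I would break by establishing $M \in \text{ANR}(\cM)$ nonequivariantly from the local contractibility of $M = M^{\{1\}}$ before applying \ref{lem:KA}. Second, and more seriously, promoting the $G\text{-}\cM$ extension property (all that \ref{thm:Jaworowski} and the Jaworowski criterion \ref{lem:Jaworowski} directly provide) to the hereditarily-paracompact $G\text{-}\cP^*$ extension property required by \ref{cor:JamesSegal} cannot be done through the metrizable Abels induction, and must instead be routed through \ref{lem:JamesSegal1} and the corrected James--Segal machinery; ensuring that the subquotients $M_G(Z,M)$ behave well over $Z/G$ for $Z \in G\text{-}\cP^*$, given that products such as the Michael line with the irrationals leave $T_4$, is exactly the delicate point the $\cP^*$ formulation was designed to handle.
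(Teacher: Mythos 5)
Your treatment of the first assertion is essentially the paper's: verify the hypotheses of Theorem~\ref{thm:Jaworowski} for $M$ (local compactness, $G$-metrizability via \ref{lem:AntonyandeNeymet}, finite covering dimension via \ref{lem:KA}, finiteness of $\cO_T$ on compact sets, and $M^K \in \text{ANR}(\cM)$), then feed the resulting $M \in G\text{-ANR}(G\text{-}\cM)$ into the countable proper $G$-CW approximation of \cite[Proof~3.1]{Khan_linearLie}. Two steps need repair. First, the local finiteness of $T$-orbit types on a $\Z$-cohomology manifold is the Mann--Floyd theorem \cite[VI:1.1]{Borel_seminar}, not Yang's theorem \cite{Yang}, which is the smooth case; this is the one genuinely Smith-theoretic input. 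Second, you should not claim that Smith theory makes $M^K$ a $\Z$-cohomology manifold for an arbitrary compact $K < G$ (that is false in general and is not how the paper argues); all you need is that $M^K$ is a closed subspace of the finite-dimensional metrizable $M$, hence $\dim(M^K) \leqslant \dim(M) < \infty$ \cite[4]{Cech}, so that Kodama--Dugundji \cite[IV:7.1]{Hu} converts the hypothesized local contractibility of $M^K$ into $M^K \in \text{ANR}(\cM)$. Both slips are local and repairable, and your observation that the apparent circularity between \ref{lem:KA} and \ref{thm:Jaworowski} is broken by establishing $M \in \text{ANR}(\cM)$ nonequivariantly first is correct.

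The genuine gap is in your converse of the second assertion. You propose to promote $M$ and $N$ from $G\text{-ANE}(G\text{-}\cM)$ to absolute neighborhood $G$-extensors for the full class $G\text{-}\cP^*$ by routing through Lemma~\ref{lem:JamesSegal1} and Theorem~\ref{thm:JamesSegal}, but that machinery does not perform this promotion. Theorem~\ref{thm:JamesSegal} upgrades a \emph{neighborhood} $G$-extensor over $B$ for $G\text{-}\cP^* \downarrow B$ to an \emph{absolute} one under fixed-set hypotheses; it takes the $\cP^*$-level neighborhood extension property as input rather than producing it from the metrizable case, and neither the Jaworowski criterion (\ref{lem:Jaworowski}) nor Abels induction (\ref{lem:induction}) yields extensions off non-metrizable paracompact test spaces such as the Michael line. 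The paper resolves this differently and more cheaply: since $\cM$ is hereditary (\ref{rem:Stone}) and $G\text{-}\cM \subset G\text{-}\cP^*$, one simply runs Corollary~\ref{cor:JamesSegal} with the class $G\text{-}\cM$ in place of $G\text{-}\cP^*$ throughout, for which $M, N \in G\text{-ANR}(G\text{-}\cM)$ already suffices. (Alternatively, as the paper notes, one may use that $M$ and $N$ have the $G$-homotopy type of $G$-CW complexes and invoke the equivariant Whitehead theorem \cite[II:2.7]{tomDieck_TG}.)
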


\begin{proof}
Let $T$ denote a maximal torus in $G$ (see Remark~\ref{rem:MaximalTorus}).
By the Mann--Floyd theorem \cite[VI:1.1]{Borel_seminar}, any compact set in $M$ has only finitely many $T$-orbit types.
The definition of $\Z$-cohomology manifold \cite[I:3.3]{Borel_seminar} includes both that it is locally compact Hausdorff \cite[I:1.1]{Borel_seminar} (hence $T_{3.5}$) and it has finite $\Z$-\v{C}ech cohomological dimension \cite[2.10]{Cohen}.
Note $M$ is $G$-metrizable, as second-countable $T_3$ implies metrizable \cite[2]{Tikhonov}, and as both $G$ and $M$ are separable (\ref{lem:AntonyandeNeymet}).

Since $M$ has finite covering dimension (\ref{lem:KA}) and $M^K$ is closed in $M \in T_4$, note $\dim(M^K) \leqslant \dim(M) < \infty$ \cite[4]{Cech} (see also \cite[3.1.4]{Engelking}).
Then, by Kodama--Dugundji \cite[IV:7.1]{Hu}, $M^K$ is locally contractible if and only if $M^K \in \text{ANR}(\cM)$.
(The empty set is trivially an ANR.)
Thus $M \in G\text{-ANR}(G\text{-}\cM)$ by Theorem~\ref{thm:Jaworowski}.
So $M$ is $G$-homotopy equivalent to a proper $G$-CW complex \cite[1.1]{AE2}.
Since $M$ is separable, the `countable' is the same as in the second half of \cite[Proof~3.1]{Khan_linearLie}.

Lastly, our version (\ref{cor:JamesSegal}) of the James--Segal criterion is satisfied (\ref{rem:Stone}) if one works in the subclass $G\text{-}\cM \subset G\text{-}\cP^*$, since $M,N \in G\text{-ANR}(G\text{-}\cM)$.
Alternatively, and less directly because of more theoretical operations, one can use that $M$ and $N$ have the $G$-homotopy type of $G$-CW complexes, then the corresponding theorem for $G$-CW complexes \cite[II:2.7]{tomDieck_TG}, which is proven using $G$-obstruction theory.
\end{proof}

\begin{defn}\label{defn:Khan}
By \textbf{topological $G$-manifold} \cite[2.2]{Khan_compactLie}, we mean that the $H$-skeleton is a topological ($C^0$) manifold for each closed subgroup $H$ of a topological group $G$.
Any \emph{topological manifold} is separable, metrizable, and locally euclidean.
\end{defn}

Finally, we generalize \cite[Corollary~3.2]{Khan_linearLie} beyond the Lie group $G$ being linear.
Recall a $G$-CW complex is \emph{countable} if it has countably many $G$-cells \cite[1.4]{Matumoto_GCW}.

\begin{cor}\label{cor:main}
Let $G$ be an arbitrary Lie group.
Any topological $G$-manifold with Palais action has equivariant homotopy type of a countable proper $G$-CW complex.
Furthermore, a $G$-map between such spaces is a $G$-homotopy equivalence if and only if its restriction to their $K$-fixed sets is a homotopy equivalence for each $K \leqslant G$.
\end{cor}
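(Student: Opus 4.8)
The plan is to obtain this as an immediate consequence of Theorem~\ref{thm:main}, so that all the work lies in verifying that theorem's hypotheses for a topological $G$-manifold $M$ carrying a Palais action (and likewise for the target $N$ in the map criterion). First I would note that the trivial subgroup is closed, so Definition~\ref{defn:Khan} applied with $H=\{1\}$ gives that the underlying space $M = M^{\{1\}}$ is itself a topological manifold; hence $M$ is separable and metrizable---so second countable---and locally euclidean. Because a locally euclidean space has, at each point, the local $\Z$-homology of euclidean space, $M$ is a $\Z$-cohomology manifold in the sense demanded by Theorem~\ref{thm:main}. The Palais hypothesis is given outright.

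Next I would check the local-contractibility condition on fixed sets. For any closed subgroup $K \leqslant G$, Definition~\ref{defn:Khan} asserts that the $K$-skeleton $M^K$ is a topological manifold, hence locally euclidean, hence locally contractible; the empty $K$-skeleta satisfy the condition vacuously. Thus every $K$-skeleton meets the hypothesis of Theorem~\ref{thm:main}, which then yields that $M$ is $G$-homotopy equivalent to a countable proper $G$-CW complex. This settles the first assertion, the word \emph{countable} transferring verbatim from the theorem.

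For the map criterion I would invoke the second half of Theorem~\ref{thm:main}. The only point requiring care---and what I expect to be the main obstacle---is reconciling the range of subgroups: the corollary quantifies over all $K \leqslant G$, whereas the underlying James--Segal machinery (\ref{cor:JamesSegal}) ranges over compact subgroups. Here I would exploit that a Palais action is proper, so each isotropy group $G_x$ is compact, and that fixed sets depend only on the closure, giving $M^K = M^{\ol{K}}$; then $M^K \neq \varnothing$ forces $\ol{K} \leqslant G_x$ for some $x$, whence $\ol{K}$ is a compact subgroup. Consequently the family $\{f^K\}_{K \leqslant G}$ of fixed-set maps agrees, after relabeling by closures, with the family indexed by compact subgroups appearing in Theorem~\ref{thm:main}, the remaining $K$ contributing only maps between empty sets. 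With this identification the two biconditionals coincide and the criterion transfers directly; since both $M$ and $N$ lie in $G\text{-}\cM$, we are moreover in the point-set setting in which (\ref{cor:JamesSegal}) was established.
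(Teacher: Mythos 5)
Your proposal is correct and follows essentially the same route as the paper: observe that a topological $G$-manifold is a second-countable $\Z$-cohomology manifold whose $K$-skeleta are locally euclidean hence locally contractible, and then cite Theorem~\ref{thm:main}. The extra care you take in reducing arbitrary subgroups $K \leqslant G$ to compact ones via closures and isotropy groups is a reasonable elaboration that the paper leaves implicit.
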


\begin{proof}
Let $M$ be a topological $G$-manifold with Palais action.
Since each manifold $M^H$ is locally euclidean hence locally contractible, and since $M$ is a second-countable $\Z$-cohomology manifold, we obtain the conclusion by Theorem~\ref{thm:main}.
\end{proof}

Next, we generalize \cite[Corollary~3.5]{Khan_compactLie} from $\G$ virtually torsionfree and we moreover answer \cite[Footnote~5]{Khan_compactLie}, which asked if it is true for $\G$ residually finite.

\begin{exm}
Let $\G$ be a countable discrete group.
Any topological $\G$-manifold with properly discontinuous action has the $\G$-homotopy type of a countable $\G$-CW complex.
Hence, any $\G$-space $M$ with $M^H$ a contractible manifold if $H$ is finite and empty otherwise is a \emph{manifold model for $E_\fin\G$} in the sense of \cite{CDK1} \cite{CDK2}.
\end{exm}

Thus more tractible are its Davis--L\"uck $\Or\,\G$-spectral homology groups \cite[3.7, 4.3]{DL1}, since we conclude countability of the $\G$-CW complex that left-approximates.

In particular, we generalize Elfving's improved thesis \cite[Theorem~1]{Elfving2}.
The definition of \emph{locally linear}, along with some discussion, is found in \cite[3.6, 3.7]{Khan_compactLie}.
Note any smoothable action is locally linear, but not vice versa; see \cite[VI:9.6]{Bredon_TG}.

\begin{cor}[Elfving]
Let $G$ be any Lie group.
Any locally linear $G$-manifold with Palais action has the equivariant homotopy type of a $G$-CW complex.
\end{cor}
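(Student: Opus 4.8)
The plan is to reduce the statement to Corollary~\ref{cor:main} by recognizing any locally linear $G$-manifold as a topological $G$-manifold in the sense of Definition~\ref{defn:Khan}. Both statements share the Palais hypothesis, so the entire task is point-set: I must verify that for each closed subgroup $H \leqslant G$ the $H$-skeleton $M^H$ is a topological manifold, that is, separable, metrizable, and locally euclidean. Separability and metrizability of each $M^H$ are inherited, as a closed subspace, from those of the underlying manifold $M$ that are part of the locally linear structure; the only genuine content is local euclideanness of $M^H$.

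First I would unwind local linearity: each point $x$ possesses a slice $G_x$-homeomorphic to some linear (orthogonal) $G_x$-representation $V_x$, so that a $G$-invariant neighborhood of the orbit $Gx$ is $G$-homeomorphic to the twisted product $G \x_{G_x} V_x$. Fixing $p \in M^H$ lying over $Gx$ and writing $p = [g,v]$, I would set $K := g\inv H g \leqslant G_x$ and compute that the $H$-fixed points of the tube near $p$ are controlled by the $K$-fixed subspace $V_x^{K} \subseteq V_x$. The crucial algebraic fact is that the fixed set of a \emph{linear} action is a linear subspace, hence a euclidean space; threading $V_x^{K}$ through the twisted-product structure, together with the induced action of the relevant normalizer subgroup (again Lie by Cartan's closed-subgroup theorem \cite[27]{CartanE}), produces a euclidean neighborhood of $p$ inside $M^H$.

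Consequently each $M^H$ is locally euclidean, so $M$ is a topological $G$-manifold with Palais action. I would then invoke Corollary~\ref{cor:main} directly: such an $M$ has the equivariant homotopy type of a countable proper $G$-CW complex, which in particular yields the asserted $G$-CW homotopy type. The ``locally linear'' hypothesis is thus strictly stronger than what is needed, serving only to guarantee the topological-manifold structure on the fixed sets.

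The hard part will be the bookkeeping in the second step. One must check that the decomposition of $(G \x_{G_x} V_x)^H$ along conjugates of $H$ contained in $G_x$ yields genuine euclidean charts --- not merely a set-theoretic union of linear subspaces of possibly varying dimension --- and that these charts are compatible as the base point $p$ moves within a single stratum. This is classical for compact Lie $G$ (cf.\ \cite[VI:9.6]{Bredon_TG}); carrying it to an arbitrary Lie group $G$ leans on the approximate-slice theorem (\ref{lem:approx_slice}) and on properness to keep the slice representations finite-dimensional and the local models tame.
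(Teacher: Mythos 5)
Your proposal is correct and follows the same route as the paper, which simply observes that a locally linear $G$-manifold is a topological $G$-manifold in the sense of Definition~\ref{defn:Khan} (fixed sets of linear slice representations are linear subspaces, hence the $H$-skeleta are locally euclidean) and then invokes Corollary~\ref{cor:main}. The paper leaves this reduction as a one-line deduction, deferring the slice bookkeeping to the cited definition of local linearity, whereas you spell it out; the extra caution about chart compatibility across a stratum is unnecessary since local euclideanness is a pointwise condition.
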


\begin{proof}
This special case now follows immediately from Corollary~\ref{cor:main}.
\end{proof}

\cite[\S4]{Khan_linearLie} has 4 uncountable families of $G$-manifolds that are not locally linear.

%------------------------------------------------------------------------------

\subsection*{Acknowledgements}
I thank Christopher Connell for various basic discussions.
I am grateful to Ric Ancel and Alex Dranishnikov for email dialogue on Lemma~\ref{lem:KA}.
The referee kindly pointed out the special case Corollary~\ref{cor:ccc} is more recently known.

\bibliographystyle{alpha}
\bibliography{CountableApproximation_LieG.bib}

\begin{thebibliography}{AAMV17}

\bibitem[AAMV17]{AAMV}
Natella Antonyan, Sergey Antonyan, Armando Mata{-}Romero, and Enrique
  Vargas{-}Betancourt.
\newblock A characterization of {$G$}-{ANR} and {$G$}-{AR} spaces for proper
  actions of {L}ie groups.
\newblock {\em Topology Appl.}, 231:292--305, 2017.

\bibitem[AAV12]{AAV}
Natella Antonyan, Sergey Antonyan, and Rub\'{e}n Varela{-}Velasco.
\newblock Universal {$G$}-spaces for proper actions of locally compact groups.
\newblock {\em Topology Appl.}, 159(4):1159--1168, 2012.

\bibitem[Abe78]{Abels}
Herbert Abels.
\newblock A universal proper {$G$}-space.
\newblock {\em Math. Z.}, 159(2):143--158, 1978.

\bibitem[AdN03]{AdN}
Sergey Antonyan and Sylvia de~Neymet.
\newblock Invariant pseudometrics on {P}alais proper {$G$}-spaces.
\newblock {\em Acta Math. Hungar.}, 98(1-2):59--69, 2003.

\bibitem[AE09]{AE2}
Sergey Antonyan and Erik Elfving.
\newblock The equivariant homotopy type of {$G$}-{ANR}s for proper actions of
  locally compact groups.
\newblock In {\em Algebraic topology---old and new}, number~85 in Banach Center
  Publ., pages 155--178. Polish Acad. Sci. Inst. Math., Warsaw, 2009.

\bibitem[Anc]{Ancel}
Fredric~D Ancel.
\newblock Approximate properties of spaces and cell-like maps.
\newblock In preparation.

\bibitem[Ant80]{Antonyan_retracts}
Sergey Antonyan.
\newblock \foreignlanguage{russian}{Ретракты в категориях
  {$G$}-пространств}.
\newblock {\em \foreignlanguage{russian}{Известия Академии
  Наук Армянской ССР. Серия: Математика}},
  15(5):365--378, 1980.

\bibitem[Ant99]{Antonyan3}
Sergey Antonyan.
\newblock Extensorial properties of orbit spaces of proper group actions.
\newblock {\em Topology Appl.}, 98(1-3):35--46, 1999.
\newblock Second Iberoamerican Conference on Topology and its Applications
  (Morelia, 1997).

\bibitem[Ant05]{Antonyan2}
Sergey Antonyan.
\newblock Orbit spaces and unions of equivariant absolute neighborhood
  extensors.
\newblock {\em Topology Appl.}, 146/147:289--315, 2005.

\bibitem[Are46]{Arens_hemicompact}
Richard~F Arens.
\newblock A topology for spaces of transformations.
\newblock {\em Ann. of Math. (2)}, 47:480--495, 1946.

\bibitem[Bil03]{Biller_cm}
Harald Biller.
\newblock Proper actions on cohomology manifolds.
\newblock {\em Trans. Amer. Math. Soc.}, 355(1):407--432, 2003.

\bibitem[Bil04]{Biller_proper}
Harald Biller.
\newblock Characterizations of proper actions.
\newblock {\em Math. Proc. Cambridge Philos. Soc.}, 136(2):429--439, 2004.

\bibitem[Bla16]{Blaschke}
Wilhelm Blaschke.
\newblock {\em Kreis und {K}ugel}.
\newblock Verlag von Veit \& Comp., Leipzig, 1916.

\bibitem[Boc45]{Bochner}
Salomon Bochner.
\newblock Compact groups of differentiable transformations.
\newblock {\em Ann. of Math. (2)}, 46:372--381, 1945.

\bibitem[Bor60]{Borel_seminar}
Armand Borel.
\newblock {\em Seminar on {T}ransformation {G}roups}.
\newblock Number~46 in Annals of Mathematics Studies. Princeton University
  Press, Princeton, 1960.
\newblock With contributions by G~Bredon, E~E~Floyd, D~Montgomery, R~Palais.

\bibitem[Bou59]{Bourbaki_integration}
Nicolas Bourbaki.
\newblock {\em {I}nt\'{e}gration vectorielle}.
\newblock Number~6 in \'{E}l\'{e}ments de math\'{e}matique: {I}nt\'{e}gration.
  Actualit\'{e}s Sci. Indust. \#1281, Paris, 1959.

\bibitem[Bou61]{Bourbaki}
Nicolas Bourbaki.
\newblock {\em {S}tructures topologiques}.
\newblock Number~1 in \'{E}l\'{e}ments de math\'{e}matique: {T}opologie
  g\'{e}n\'{e}rale. Actualit\'{e}s Sci. Indust. \#1142, Paris, third (recast)
  edition, 1961.

\bibitem[Bou82]{Bourbaki2}
Nicolas Bourbaki.
\newblock {\em Groupes de Lie r\'{e}els compacts}.
\newblock Number~9 in \'{E}l\'{e}ments de math\'{e}matique: {G}roupes et
  alg\`ebres de {L}ie. Masson, Paris, 1982.

\bibitem[Bre72]{Bredon_TG}
Glen~E Bredon.
\newblock {\em Introduction to {C}ompact {T}ransformation {G}roups}.
\newblock Number~46 in Pure and Applied Mathematics. Academic Press, New York
  and London, 1972.

\bibitem[Car30]{CartanE}
\'{E}lie Cartan.
\newblock {\em La th\'{e}orie des groupes finis et continus et l'{A}nalysis
  situs}.
\newblock Number~42 in M\'{e}morial des {S}ciences {M}ath\'{e}matiques.
  Gauthier-Villars, Paris, 1930.

\bibitem[CDK14]{CDK1}
Frank Connolly, James~F Davis, and Qayum Khan.
\newblock Topological rigidity and {$H_1$}-negative involutions on tori.
\newblock {\em Geom. Topol.}, 18(3):1719--1768, 2014.

\bibitem[CDK15]{CDK2}
Frank Connolly, James~F Davis, and Qayum Khan.
\newblock Topological rigidity and actions on contractible manifolds with
  discrete singular set.
\newblock {\em Trans. Amer. Math. Soc.}, B2:113--133, 2015.

\bibitem[{\v{C}}ec33]{Cech}
Eduard {\v{C}}ech.
\newblock P\v{r}\'{i}sp\v{e}vek k teorii dimense.
\newblock {\em {\v{C}as. Mat. Fys.}}, 62:277--291, 1933.

\bibitem[Coh54]{Cohen}
Haskell Cohen.
\newblock A cohomological definition of dimension for locally compact
  {H}ausdorff spaces.
\newblock {\em Duke Math. J.}, 21:209--224, 1954.

\bibitem[Die44]{Dieudonne}
Jean Dieudonn\'{e}.
\newblock Une g\'{e}n\'{e}ralisation des espaces compacts.
\newblock {\em J. Math. Pures Appl. (9)}, 23:65--76, 1944.

\bibitem[DL98]{DL1}
James~F Davis and Wolfgang L{\"u}ck.
\newblock Spaces over a category and assembly maps in isomorphism conjectures
  in {$K$}- and {$L$}-theory.
\newblock {\em $K$-Theory}, 15(3):201--252, 1998.

\bibitem[Dol63]{Dold}
Albrecht Dold.
\newblock Partitions of unity in the theory of fibrations.
\newblock {\em Ann. of Math. (2)}, 78:223--255, 1963.

\bibitem[Dow53]{Dowker4}
Clifford~H Dowker.
\newblock Inductive dimension of completely normal spaces.
\newblock {\em Quart. J. Math. Oxford Ser. (2)}, 4:267--281, 1953.

\bibitem[Dra88a]{Dranishnikov}
Aleksandr~N Dranishnikov.
\newblock \foreignlanguage{russian}{Гомологическая теория
  размерности}.
\newblock {\em \foreignlanguage{russian}{Академия Наук СССР и
  Московское Математическое Общество:
  Успехи Математических Наук}}, 43(4(262)):11--55,
  1988.

\bibitem[Dra88b]{Dranishnikov1}
Aleksandr~N Dranishnikov.
\newblock \foreignlanguage{russian}{О проблеме
  П\,С~Александрова}.
\newblock {\em \foreignlanguage{russian}{Математический
  Сборник (новая серия)}}, 135(177)(4):551--557, 1988.

\bibitem[Dug51]{Dugundji}
James Dugundji.
\newblock An extension of {T}ietze's theorem.
\newblock {\em Pacific J. Math.}, 1:353--367, 1951.

\bibitem[Elf01]{Elfving2}
Erik Elfving.
\newblock The {$G$}-homotopy type of proper locally linear {$G$}-manifolds,
  {II}.
\newblock {\em Manuscripta Math.}, 105(2):235--251, 2001.

\bibitem[Eng78]{Engelking}
Ryszard Engelking.
\newblock {\em Dimension {T}heory}.
\newblock Number~19 in North-Holland Mathematical Library. North-Holland
  Publishing Co., Amsterdam-Oxford-New York-Warsaw, 1978.
\newblock Translated from the Polish original and revised by the author.

\bibitem[Eng89]{Engelking2}
Ryszard Engelking.
\newblock {\em General {T}opology}.
\newblock Number~6 in Sigma Series in Pure Mathematics. Heldermann Verlag,
  Berlin, 1989.
\newblock Translated from Polish and revised by the author.

\bibitem[Fer08]{Feragen}
Aasa Feragen.
\newblock Equivariant embedding of metrizable {$G$}-spaces in linear
  {$G$}-spaces.
\newblock {\em Proc. Amer. Math. Soc.}, 136(8):2985--2995, 2008.

\bibitem[Hau14]{Hausdorff1}
Felix Hausdorff.
\newblock {\em Grundz\"uge der Mengenlehre}.
\newblock Verlag von Veit \& Comp., Leipzig, 1914.

\bibitem[Hau27]{Hausdorff2}
Felix Hausdorff.
\newblock {\em Mengenlehre}.
\newblock Number~7 in G\"oschens Lehrb\"ucherei Gruppe I. Walter de Gruyter,
  Berlin and Leipzig, second edition, 1927.

\bibitem[Hu65]{Hu}
Sze-Tsen Hu.
\newblock {\em Theory of {R}etracts}.
\newblock Wayne State University Press, Detroit, 1965.

\bibitem[Hus66]{Husemoller}
Dale Husem\"{o}ller.
\newblock {\em Fibre {B}undles}.
\newblock McGraw-Hill Book, NewYork-London-Sydney, 1966.

\bibitem[Iwa49]{Iwasawa}
Kenkichi Iwasawa.
\newblock On some types of topological groups.
\newblock {\em Ann. of Math. (2)}, 50:507--558, 1949.

\bibitem[Jaw76]{Jaworowski1}
Jan~W Jaworowski.
\newblock Extensions of {$G$}-maps and euclidean {$G$}-retracts.
\newblock {\em Math. Z.}, 146(2):143--148, 1976.

\bibitem[Jaw81]{Jaworowski2}
Jan~W Jaworowski.
\newblock An equivariant extension theorem and {$G$}-retracts with a finite
  structure.
\newblock {\em Manuscripta Math.}, 35(3):323--329, 1981.

\bibitem[JS80]{JS2}
Ioan James and Graeme Segal.
\newblock On equivariant homotopy theory.
\newblock In {\em Topology {S}ymposium, {S}iegen 1979 ({P}roc. {S}ympos.,
  {U}niv. {S}iegen, {S}iegen, 1979)}, volume 788 of {\em Lecture Notes in
  Math.}, pages 316--330. Springer, Berlin, 1980.

\bibitem[Kel50]{Kelley_nets}
John~L Kelley.
\newblock Convergence in topology.
\newblock {\em Duke Math. J.}, 17:277--283, 1950.

\bibitem[Khaa]{Khan_classify}
Qayum Khan.
\newblock Cardinal-indexed classifying spaces for families of subgroups of any
  topological group.
\newblock \texttt{arXiv:math.GT/1907.00871}; accepted to Topology Appl.

\bibitem[Khab]{Khan_linearLie}
Qayum Khan.
\newblock Countable approximation of topological {$G$}-manifolds, {II}: linear
  {L}ie groups {$G$}.
\newblock \texttt{DOI:10.1142/S179352532050051X}; Journal of Topology and
  Analysis.

\bibitem[Kha18]{Khan_compactLie}
Qayum Khan.
\newblock Countable approximation of topological {$G$}-manifolds: compact {L}ie
  groups {$G$}.
\newblock {\em Topology Appl.}, 235:14--21, 2018.

\bibitem[Kos74]{Kosniowski}
Czes Kosniowski.
\newblock Equivariant cohomology and stable cohomotopy.
\newblock {\em Math. Ann.}, 210:83--104, 1974.

\bibitem[Kur35]{Kuratowski}
Kazimierz Kuratowski.
\newblock Quelques probl\`emes concernant les espaces m\'etriques
  non-s\'eparables.
\newblock {\em Fund. Math.}, 25:534--545, 1935.

\bibitem[Las81]{Lashof}
Richard Lashof.
\newblock The equivariant extension theorem.
\newblock {\em Proc. Amer. Math. Soc.}, 83(1):138--140, 1981.

\bibitem[Mat71]{Matumoto_GCW}
Takao Matumoto.
\newblock On {$G$}-{${\rm CW}$} complexes and a theorem of {J\,H\,C}
  {W}hitehead.
\newblock {\em J. Fac. Sci. Univ. Tokyo Sect. IA Math.}, 18:363--374, 1971.

\bibitem[Mic63]{Michael2}
Ernest Michael.
\newblock The product of a normal space and a metric space need not be normal.
\newblock {\em Bull. Amer. Math. Soc.}, 69:375--376, 1963.

\bibitem[Mil56]{Milnor_univ2}
John Milnor.
\newblock Construction of universal bundles, {II}.
\newblock {\em Ann. of Math. (2)}, 63:430--436, 1956.

\bibitem[Mil57]{Milnor_notes}
John Milnor.
\newblock {\em Lectures on {C}haracteristic {C}lasses}.
\newblock Princeton University, 1957.
\newblock Notes by James Stasheff.

\bibitem[Mos55]{Mostow_selfadjoint}
George~D Mostow.
\newblock Self-adjoint groups.
\newblock {\em Ann. of Math. (2)}, 62:44--55, 1955.

\bibitem[Mun00]{Munkres}
James~R Munkres.
\newblock {\em Topology}.
\newblock Prentice Hall, Upper Saddle River, 2000.

\bibitem[MZ42]{MZ_neighbor}
Deane Montgomery and Leo Zippin.
\newblock A theorem on {L}ie groups.
\newblock {\em Bull. Amer. Math. Soc.}, 48:448--452, 1942.

\bibitem[MZ55]{MZ_book}
Deane Montgomery and Leo Zippin.
\newblock {\em Topological {T}ransformation {G}roups}.
\newblock Interscience Publishers, New York-London, 1955.

\bibitem[Pal60]{Palais_book}
Richard Palais.
\newblock {\em The {C}lassification of {$G$}-spaces}.
\newblock Number~36 in Mem. Amer. Math. Soc. American Mathematical Society,
  Providence, 1960.

\bibitem[Pal61]{Palais}
Richard~S Palais.
\newblock On the existence of slices for actions of non-compact {L}ie groups.
\newblock {\em Ann. of Math. (2)}, 73:295--323, 1961.

\bibitem[Pom05]{Pompeiu}
Dimitrie Pomp\'eiu.
\newblock Sur la continuit\'e des fonctions de variables complexes.
\newblock {\em Toulouse Ann. (2)}, 7:264--315, 1905.

\bibitem[Pon54]{Pontryagin2}
Lev~S Pontryagin.
\newblock {\em \foreignlanguage{russian}{Непрерывные Группы}}.
\newblock \foreignlanguage{russian}{Государственное
  Издательство Технико-Теоретической
  Литературы}, Moscow, 1954.
\newblock Second edition of the 1938 Russian book.

\bibitem[Rud91]{Rudin}
Walter Rudin.
\newblock {\em Functional {A}nalysis}.
\newblock International Series in Pure and Applied Mathematics. McGraw-Hill,
  New York, second edition, 1991.

\bibitem[Smi75]{Smirnov}
Yuri\u{\i}~M Smirnov.
\newblock \foreignlanguage{russian}{Множества
  $H$-неподвижных точек -- абсолютные
  экстензоры}.
\newblock {\em \foreignlanguage{russian}{Математический
  Сборник (новая серия)}}, 98(140)(1(9)):93--101, 1975.

\bibitem[Ste51]{Steenrod_book}
Norman Steenrod.
\newblock {\em The {T}opology of {F}ibre {B}undles}.
\newblock Number~14 in Princeton Mathematical Series. Princeton University
  Press, Princeton, 1951.

\bibitem[Sto48]{Stone1}
Arthur~H Stone.
\newblock Paracompactness and product spaces.
\newblock {\em Bull. Amer. Math. Soc.}, 54:977--982, 1948.

\bibitem[tD66]{tomDieck_numerable}
Tammo tom Dieck.
\newblock Klassifikation numerierbarer {B}\"{u}ndel.
\newblock {\em Arch. Math. (Basel)}, 17:395--399, 1966.

\bibitem[tD87]{tomDieck_TG}
Tammo tom Dieck.
\newblock {\em Transformation {G}roups}.
\newblock Number~8 in De Gruyter Studies in Mathematics. Walter de Gruyter,
  Berlin, 1987.

\bibitem[Tik26]{Tikhonov}
Andre\u{\i} Tikhonov.
\newblock \"{U}ber einen {M}etrisationssatz von {P} {U}rysohn.
\newblock {\em Math. Ann.}, 95(1):139--142, 1926.

\bibitem[Ury25]{Urysohn}
Pavel Urysohn.
\newblock \"{U}ber die {M}\"{a}chtigkeit der zusammenh\"{a}ngenden {M}engen.
\newblock {\em Math. Ann.}, 94(1):262--295, 1925.

\bibitem[Whi40]{Whitehead_triangulability}
J~H~C Whitehead.
\newblock On {$C^1$}-complexes.
\newblock {\em Ann. of Math. (2)}, 41:809--824, 1940.

\bibitem[Whi49]{Whitehead1}
J~H~C Whitehead.
\newblock Combinatorial homotopy, {I}.
\newblock {\em Bull. Amer. Math. Soc.}, 55:213--245, 1949.

\bibitem[Yan57]{Yang}
Chung-Tao Yang.
\newblock On a problem of {M}ontgomery.
\newblock {\em Proc. Amer. Math. Soc.}, 8:255--257, 1957.

\bibitem[Zor35]{Zorn}
Max Zorn.
\newblock A remark on method in transfinite algebra.
\newblock {\em Bull. Amer. Math. Soc.}, 41(10):667--670, 1935.

\end{thebibliography}

\end{document}